\newtheorem{remark}{Remark}
\newtheorem{assump}{Assumption}
\newtheorem{proposition}{Proposition}
\newtheorem{lemma}{Lemma}
\newtheorem{theorem}{Theorem}
\DeclareMathOperator{\sgn}{sgn}
\DeclareMathOperator*{\argmin}{argmin}
\def\longrightharpoonup{\relbar\joinrel\rightharpoonup}
\def\longleftharpoondown{\leftharpoondown\joinrel\relbar}
\def\longrightleftharpoons{
  \mathop{
    \vcenter{
       \hbox{
       \ooalign{
          \raise1pt\hbox{$\longrightharpoonup\joinrel$}\crcr
  	  \lower1pt\hbox{$\longleftharpoondown\joinrel$}
	}
      }
    }
  }
}
\definecolor{cream}{RGB}{222,217,201}
\let\@fnsymbol\@arabic
\title{Learning chemical reaction networks from trajectory data}
\author{Wei Zhang\thanks{Zuse Institute Berlin, D-14195 Berlin, Germany. }
\and Stefan Klus\thanks{Department of Mathematics and Computer Science, Freie
Universit\"at Berlin, D-14195 Berlin, Germany. }
\and Tim Conrad\footnotemark[1] \footnotemark[2]
\and Christof Sch\"{u}tte\footnotemark[1] \footnotemark[2]
}
\begin{document}

\maketitle

\begin{abstract}
    We develop a data-driven method to learn chemical reaction networks from
    trajectory data. Modeling the reaction system as a continuous-time Markov
    chain and assuming the system is fully observed, our
    method learns the propensity functions of the system with predetermined
    basis functions by maximizing the likelihood function of the trajectory
    data under $l^1$ sparse regularization. We demonstrate our method with
    numerical examples using synthetic data and carry out an asymptotic analysis of the proposed learning procedure in the infinite-data limit.
\end{abstract}

\begin{keywords}
    chemical reaction, inverse problem, data-driven method, $l^1$ sparse optimization, asymptotic analysis
\end{keywords}

\begin{ams}
  92C42, 62M86
\end{ams}

\section{Introduction}
\label{sec-intro}

Chemical reaction networks~\cite{ssck_gillespie,Anderson2011} have been shown to be very useful in studying dynamical
processes in chemistry and biology, where systems under investigation
typically contain many different reactants that interact with each
other.
In in-silico biology, for instance, the cellular processes are often modeled as chemical reaction networks,   
which take the relevant biological/chemical components as well as their
interactions into
account~\cite{Gunawardena_chemicalreaction,nature-network-biology-barabasi,SRIVASTAVA2002,formalism-in-system-biology-machado2011}.
Modeling cellular processes, or finding the kinetic structure of the
underlying reaction networks~\cite{wilkinson2011stochastic,compare-reverse-methods-silico-network,efficient-infer-ssystems-alternating-regression,how-to-infer-gene-network-revisited,
reverse-engineering-in-system-biology-2014,sparse-regression-from-single-cell-2016}, is one of the most prominent fields of in-silico biology due to the important
role of such models in understanding the cellular behavior. This task is particularly
challenging for realistic reaction networks that are characterized by a large number of elements
and interactions (reactions). At the same time, more and more trajectory data of
cellular processes is becoming available due to state-of-the-art single-cell
based laboratory techniques~\cite{scRNAsq-2019,single-cell-will-revolutionize}.

The aim of this work is to develop data-driven methods~\cite{kutz2013data-book} that allow us to
learn chemical reaction networks from trajectory data and to apply the
new methods to the modeling of cellular processes. Given trajectory data
of a stochastic chemical reaction process, 
we propose a numerical approach to reconstruct the underlying reaction network
by maximizing the likelihood function of the trajectory with sparsity regularization.
Roughly speaking, our approach consists of three steps. In the first step, preliminary information of the reaction network such as the number of different elements
(reactant, products) and the total number of reaction channels is extracted from trajectory data by counting and enumerating.
Based on this information, the second step is to define several basis functions which will be used in learning the propensity functions of the reaction network. 
The theory of chemical reactions suggests that we can choose each basis
function as the product of copy-numbers of at most two different
reactants~\cite{ANGELI2009,erdi1989mathematical}, i.e., polynomial functions of degree up to $2$.
In the third step, the propensity function of each reaction channel is represented using linear combinations of the basis functions involving unknown
coefficients, which are then determined by maximizing the log-likelihood
function of the trajectory data along with sparse regularization techniques using the $l^1$-norm~\cite{hastie-Tibshirani-book-sparsity}. 

In contrast to Lasso~\cite{lasso1996,lasso-retrospective2011}, the optimization problem that needs to be solved in our
learning approach is a nonlinear sparse optimization problem, due to the nonlinearity of the log-likelihood function of the reaction network.
In our study, we find that FISTA (Fast Iterative Shrinkage-Thresholding
Algorithm) proposed in~\cite{fista2009} is a suitable algorithm for solving our problem.
We also propose a simple preconditioning technique which can significantly
improve the performance of the numerical algorithm by allowing larger step-sizes in FISTA.
This preconditioning technique turns out to be particularly useful when the basis functions
take values at different orders of magnitudes for the given trajectory data.
Furthermore, we provide an asymptotic analysis of our learning approach in the infinite-data limit.
Under certain technical assumptions, by applying large sample theory~\cite{ferguson1996course,lehmann2004elements,van2000asymptotic} and limit theorems for stochastic processes~\cite{ethier1986markov}, 
we establish the asymptotic consistency and the asymptotic normality of the
estimators in our learning procedure, which therefore provides a solid
theoretical basis for the data-driven method proposed in this paper.

Let us first review related work and summarize the contributions of this paper. 
The reconstruction of the governing equations from data using sparsity constraints is getting more and more attention, see~\cite{predict-catastrophes-compress-sensing,Brunton-sindy,inferring-bio-network-by-sparse-identy-2016,efficient-infer-ssystems-alternating-regression}
for methods pertaining to ordinary differential equations (ODEs) and \cite{sindy-sde} for learning stochastic differential equations (SDEs).
For chemical and biological reaction systems, the problem of estimating unknown parameters
has been well studied when the systems are modeled both as
ODEs~\cite{HUG2013-high-dim-param-estimation,parameter-estimation-sys-biology}
and as continuous-time Markov chain processes~\cite{Anderson2011,reinker-param-estimation-stochastic,Boys2008,wilkinson2011stochastic},
while the reconstruction of the entire chemical reaction networks, i.e., finding parsimonious models, has only
been considered when the systems are modeled as ODE systems~\cite{reverse-engineering-in-system-biology-2014,inferring-bio-network-by-sparse-identy-2016,efficient-infer-ssystems-alternating-regression}.
We refer the readers to the nice review~\cite{reverse-engineering-in-system-biology-2014}
for recent developments on the reverse engineering in systems biology.
Compared to the aforementioned existing results, our work is new in the
following three aspects.
Firstly, we study sparse reconstruction of chemical reaction networks as continuous-time Markov chains, which, to the best of our knowledge, has not been considered in the literature.
In contrast to ODE models, a continuous-time Markov chain as a stochastic model
has the ability to provide more details of the reaction systems by capturing
stochastic effects, which are known to be important for cellular processes~\cite{survey-methods-for-modeling-and-analyzing,Swain-stochasticity,Kar-explore-the-noise}.
Secondly, we have developed numerical codes in which we implemented the FISTA
method~\cite{fista2009} to solve a nonlinear sparse optimization problem in order to learn the reaction networks from trajectory data. 
Our numerical approaches, in particular the preconditioning technique, may be useful in other sparse optimization problems as well.
Thirdly, we provide a theoretical justification of the proposed data-driven method.
Note that, although different data-driven methods using sparsity~\cite{predict-catastrophes-compress-sensing,Brunton-sindy,sindy-sde} have been developed 
in the literature for different types of dynamical systems, the theoretical
analysis of these methods is largely incomplete (see~\cite{exact-recovery-chaotic-2017}).
We expect the theoretical analysis presented in the current work to shed light on the characteristic properties of other data-driven methods as well.

Before concluding this introduction, we discuss several issues that will not be studied in detail in the current paper.
Most importantly, we assume that the dynamics of the system is fully observed.
In applications, it may be the case that either only certain ``important'' species
in the system are observed or the full dynamics is only discretely observed at a fixed observation frequency~\cite{Boys2008}.
In the former case, one can still apply the sparse learning approach proposed here and the output will be an ``effective'' model for the observed ``important'' species. 
However, the theoretical asymptotic analysis does not carry over directly, and it is therefore important to assess the quality of the effective model provided by the learning approach.
In the latter case, where the full dynamics is observed discretely, learning
the parsimonious model becomes more challenging. First of all, since not all
reactions are observed, the reaction channels of the system need to be identified by other means.
Supposing that this can be done, the likelihood function of the given
trajectory data can be obtained by summing up the likelihood of all possible underlying trajectories that are consistent with the observation data.
One can formulate the learning approach again as a sparse minimization
problem, but it will be necessary to sample the underlying trajectories of the
system in order to evaluate both the likelihood function and its derivatives.
This results in some difficulties when solving the sparse minimization problem. 
We will address these issues in future work.

The remainder of the paper is organized as follows. In Section~\ref{sec-forward}, we introduce chemical reaction networks and the required notation. Learning chemical reaction networks from trajectory data and its formulation as an optimization problem will be considered in Section~\ref{sec-inverse}. 
In Section~\ref{sec-example}, we demonstrate the efficiency of
the numerical algorithm for solving the (sparse) optimization problems with three concrete numerical examples.
In Section~\ref{sec-asymptotics}, we analyze the learning tasks when the
length of the trajectory data goes to infinity and study the asymptotic behavior of the solutions of the optimization problems.
Appendix~\ref{app-0} summarizes the main steps of the algorithm FISTA. 
Appendix~\ref{app-1} contains properties of an elementary function used in the
current work. 
Two useful limit lemmas of counting processes are summarized in Appendix~\ref{app-2}.
Finally, the proofs of results in Section~\ref{sec-asymptotics} are collected in Appendix~\ref{app-3}.

The code used for producing the numerical results in Section~\ref{sec-example} is available at: \url{https://github.com/zwpku/sparse-learning-CRN}.

\section{Chemical reaction networks as continuous-time Markov chains: forward problem}
\label{sec-forward}
Chemical reaction networks consist of different chemical species that can interact with each other through independent chemical reactions. 
Suppose the system has $n$ different chemical species, denoted by $S_1,
S_2, \dots, S_n$. Each species $S_i$, $1 \le i \le n$, has $x^{(i)}$ copies, where the copy-number $x^{(i)} \ge 0$ may change whenever a reaction involving the species $S_i$ has occurred.
The state of the system can be represented as the vector 
\begingroup
\setlength\abovedisplayskip{6pt}
\setlength\belowdisplayskip{6pt}
\begin{align*}
x = (x^{(1)}, x^{(2)}, \dots, x^{(n)})^\top \in \mathbb{X}\subseteq \mathbb{N}^n\,,
\end{align*}
\endgroup
where $\mathbb{N}=\{0, 1, 2,\dots\}$ and $\mathbb{X}$ is the set of all possible states of the system. 

The evolution of the system's state $x$ can be modeled as a state-dependent continuous-time Markov chain~\cite{Anderson2011,ssck_gillespie}. 
Let $\mathcal{R}$ denote a reaction in the system. The state change vector $v$
of $\mathcal{R}$, $v \in \mathbb{N}^n$, is defined such that, starting in state $x$, the state of the system will change to $x+v$ when the reaction $\mathcal{R}$ occurs. 
The waiting time $\tau_{\mathcal{R}}$ of the system before the reaction $\mathcal{R}$ occurs satisfies an exponential distribution with the rate
parameter $a^*_{\mathcal{R}}(x)$ (propensity function), which in turn depends on both the state $x$ and the structure of $\mathcal{R}$. 
Specifically, the probability density function of $\tau_{\mathcal{R}}$ is given by
\begingroup
\setlength\abovedisplayskip{6pt}
\setlength\belowdisplayskip{6pt}
\begin{align*}
  \psi^*_\mathcal{R}(t\,|\,x) = a^*_{\mathcal{R}}(x)\,\exp(- a^*_{\mathcal{R}}(x) t)\,,
  \qquad t \ge 0\,.
\end{align*}
\endgroup
In Table~\ref{propensity-table}, we list the propensity functions of reactions
which consume at most two molecules (see~\cite{ball06,kang2013} for further details). In particular, note that the propensity functions for the reactions in
Table~\ref{propensity-table} are polynomial functions whose degrees are less or equal to $2$. 

\begin{table}
    \caption{
For different types of chemical reactions, propensity function $a_{\mathcal{R}}^*(x)$ as a function of system's state
      $x = (x^{(1)}, \dots, x^{(n)})^\top$ is given by law of mass-action. $V$ is a constant related to either the volume or the total
      number of molecules in the system and $\kappa$ denotes the rate constants of chemical reactions. 
      \label{propensity-table}}
    \centering
    \scalebox{0.9}{
    \begin{tabular}{cll}
      No. & Reaction $\mathcal{R}$ & $a_{\mathcal{R}}^*(x)$ \\
      \hline
      $1$ & \ce{$\emptyset$ ->[\kappa] products} &   $\kappa V$ \\
      $2$ & \ce{$S_i$ ->[\kappa] products} &   $\kappa x^{(i)}$ \\
      $3$ & \ce{2$S_i$ ->[\kappa] products} &  $\frac{\kappa}{V} x^{(i)}(x^{(i)} - 1)$ \\
      $4$ & \ce{$S_i$ + $S_j$ ->[\kappa] products} & $\frac{\kappa}{V} x^{(i)}x^{(j)}$ 
    \end{tabular}}
\end{table}

In many reaction systems, different chemical reactions may have the same state change vector $v$ (see the second example in Remark~\ref{rmk-2-simple-examples}).
Assume that $N$ chemical reactions $\mathcal{R}_1$, $\mathcal{R}_2$, $\dots$,
$\mathcal{R}_N$ are involved in the evolution of the system
and these $N$ reactions have in total $K$ different state change vectors $v_1, v_2, \dots, v_K$, where $K\le N$. For each $v_i$, $1 \le i \le K$, we introduce the terminology \textit{chemical
channel} $\mathcal{C}_i$. We say the reaction $\mathcal{R}$ belongs to
the channel $\mathcal{C}_i$, or $\mathcal{C}_i$ contains the
reaction $\mathcal{R}$, if the state change vector of $\mathcal{R}$ is $v_i$.
For each $\mathcal{C}_i$, we also define the index set 
\begin{align*}
  \mathcal{I}_i = \Big\{j\, \Big|\, 1 \le j \le N,~\mathcal{R}_j~\mbox{belongs
  to the channel}~ \mathcal{C}_i\Big\}\,,
\end{align*}
and let $N_i$ be the number of chemical reactions belonging to $\mathcal{C}_i$, i.e., $N_i = |\mathcal{I}_i|$.
Clearly, these index sets satisfy
  $\bigcup_{i=1}^K \mathcal{I}_i = \big\{1, 2, \dots, N\big\}$, $\mathcal{I}_i \bigcap \mathcal{I}_{i'} = \emptyset$, if $i\neq i'$\,, 
and therefore $\sum\limits_{i=1}^{K} N_i = N$.

A reaction channel $\mathcal{C}_i$ is said to be activated when a certain reaction $\mathcal{R}$ belonging to $\mathcal{C}_i$ occurs. 
For each $1 \le i \le K$,  $\tau_{i} = \min\limits_{j \in \mathcal{I}_i}
\tau_{\mathcal{R}_j}$ is the waiting time at a state $x$ before the activation of the
channel $\mathcal{C}_i$,  while $\tau = \min\limits_{1 \le j \le N} \tau_{\mathcal{R}_j}$
is the waiting time before any of the chemical reactions in the system occurs.
Assuming the chemical reactions are independent of each
other and the waiting times $\tau_{\mathcal{R}_j}$ follow exponential
distributions, we know that the waiting times $\tau_i$ and $\tau$ also follow exponential
distributions, with the propensity functions 
{ \small
\setlength\abovedisplayskip{5pt}
\setlength\belowdisplayskip{5pt}
\begin{align}
  a^*_i(x) = \sum_{j \in \mathcal{I}_i} a^*_{\mathcal{R}_j}(x)\,,\quad 
  a^*(x) = \sum_{i=1}^K a^*_i(x) = \sum_{j=1}^N a^*_{\mathcal{R}_j}(x)\,,
  \label{a-sum-ai-true}
\end{align}}
respectively. In particular, let $\psi^*(t\,;\,x)$ be the probability density function of $\tau$
and $p^*(i\,;\,x)$ the probability that $\mathcal{C}_i$ is the first
channel which becomes activated at state $x$, then 
\begin{equation}
  \begin{split}
    \psi^*(t\,;\,x) &= a^*(x) \exp\big(-a^*(x)\,t\big)\,, \quad t \ge 0\,,\\
  p^*(i\,;\,x) &= \frac{a^*_i(x)}{a^*(x)}\,, \quad 1 \le i \le K\,.
  \end{split}
  \label{phi-p-density-true}
\end{equation}

We point out that the evolution equation of the dynamics described above
(continuous-time Markov chains) can be expressed in a simple form.
In fact, denoting $X(t) \in \mathbb{N}^n$ the state of the system at time
$t \ge 0$, from~\cite{Anderson2011} we know that $X(t)$ satisfies the dynamical equation
{\small
\setlength\abovedisplayskip{5pt}
\setlength\belowdisplayskip{5pt}
\begin{align}
  X(t) = X(0) + \sum_{i=1}^K \mathcal{P}_i\Big(\int_0^t
  a^*_i(X(s))\,ds\Big)v_i\,,\quad t \ge 0\,,
  \label{eqn-of-xt}
\end{align}
}
where $\mathcal{P}_i$, $ i = 1, \dots, K $, are independent unit Poisson processes. 

\begin{remark}
As concrete examples, let us consider two simple reaction networks. 
  \begin{enumerate}[wide]
    \item Reactions $\ce{$A+B$ ->[\kappa_1] 2B}\,, \, \ce{$B$ ->[\kappa_2] A}$,
      with rate constants $\kappa_1$, $\kappa_2$. In this case, we
      have two different reactions ($N=2$) and two different reaction channels ($K=2$), with state change vectors
$v_1 = (-1, 1)^\top$ and $v_2 = (1,-1)^\top$ respectively. 
According to Table~\ref{propensity-table}, the propensity functions of these two channels (assuming $V=1$)
are $a_1^*(x) = \kappa_1\,x^{(1)} x^{(2)}$,\, $a_2^*(x) = \kappa_2\, x^{(2)}$.
\item 
    Reactions $\ce{$A$ + $B$ ->[\kappa_1] $B$}\,,\, \ce{$A$ ->[\kappa_2] $\emptyset$}$, with rate constants $\kappa_1$, $\kappa_2$.
In this case, we have $N=2$, $K=1$, since the state change vector of both reactions is $v=(-1,0)^\top$. 
      The propensity functions of the two reactions $\mathcal{R}_1$, $\mathcal{R}_2$ (assuming $V=1$)
are $a_{\mathcal{R}_1}^*(x) = \kappa_1\,x^{(1)} x^{(2)}$ and
      $a_{\mathcal{R}_2}^*(x) = \kappa_2\, x^{(1)}$, while the 
      propensity function of the channel $v$ is $a_1^*(x) = a_{\mathcal{R}_1}^*(x) + a_{\mathcal{R}_2}^*(x) = \kappa_1\,x^{(1)} x^{(2)} + \kappa_2\, x^{(1)}$.
  \end{enumerate}
  \label{rmk-2-simple-examples}
\end{remark}

\section{Learning chemical reaction networks: inverse problem}
\label{sec-inverse}
In this section, we study the problem of learning chemical reaction networks from trajectory data. 
Depending on the information known about the chemical reaction networks, we consider two different learning
tasks in Subsection~\ref{subsec-learn-rate} and Subsection~\ref{subsec-unkown-structure}, where the second task is the main focus of this paper. 
In both tasks, the propensity functions in \eqref{a-sum-ai-true} are
determined by maximizing the log-likelihood function among the parameterized propensity
functions which depend on both a set of basis functions and several parameters.
To emphasize the dependence on parameters, let the parameterized propensity functions be denoted by $a_i(x\,;\,\bm{\omega})$ and $a(x\,;\,\bm{\omega})$,
respectively, where $x\in \mathbb{X}$ and $\bm{\omega}$ is the vector consisting of all parameters. 
Similar to \eqref{phi-p-density-true}, we define the probability (density)
functions corresponding to $\bm{\omega}$
\begingroup
\small
\setlength\abovedisplayskip{6pt}
\setlength\belowdisplayskip{6pt}
\begin{align}
  \begin{split}
    \psi(t\,;x, \bm{\omega}) &= a(x\,;\bm{\omega}) \exp\big(-
    a(x\,;\,\bm{\omega}) t\big)\,, \quad t \ge 0\,,\\
  p(i\,;x,\bm{\omega}) &=
  \frac{a_i(x\,;\bm{\omega})}{a(x\,;\bm{\omega})}\,, \quad 1 \le i \le K\,.
  \end{split}
  \label{phi-p-density}
\end{align}
\endgroup

In the first learning task (Subsection~\ref{subsec-learn-rate}), we assume
that the structure of the chemical reactions is known and the goal is to determine the reaction rate constant of each reaction,
i.e., the constants $\kappa$ in Table~\ref{propensity-table}. In this case,
each basis function in the parameterized propensity functions
corresponds to an actual chemical reaction that is indeed involved in the
evolution of the system (no redundancy), while the task is to determine the
value of each parameter (parameter estimation) by maximizing the log-likelihood function. 
This is indeed a standard problem and has been widely studied in the
literature. We include it in this section due to its connections to the sparse learning task considered in Subsection~\ref{subsec-unkown-structure}.

In the second learning task (Subsection~\ref{subsec-unkown-structure}), on the other hand, we assume that the structure of the chemical reactions in the system is also
unknown.  In this case, candidate basis functions are chosen to parameterize the
propensity functions, and $l^1$ sparsity regularization is used to remove the redundancy in the basis functions.

Before introducing the two learning tasks, we briefly discuss the trajectory of the system and derive the likelihood function of a given
trajectory.

\subsection{Space of trajectories and the likelihood function}
\label{subsec-space-trajectory}
Given $T>0$, there are two different ways to represent the trajectories of the system in the interval $[0, T]$. 
The first representation relies on the total number $M$ of reactions occurred within $[0,
T]$, the waiting time $\tau$ of each reaction, and the new state of the system after each of the $M$ reactions. Specifically, starting
from a state $y_0 \in \mathbb{X}$ at time $s=0$, each trajectory $X(s)$ in the time $[0, T]$ can be represented as a sequence 
\begingroup
\setlength\abovedisplayskip{6pt}
\setlength\belowdisplayskip{6pt}
\begin{align}
    (y_0, t_0)\,,\,(y_1, t_1)\,,\, (y_2, t_2)\,, \dots, \,(y_M, t_M)\,,
    \label{y-t-sequence}
\end{align}
\endgroup
which means that, starting from $y_0$, the state of the system changes from 
$y_l$ to $y_{l+1}$ after waiting for a period of time of length $t_l$, where $0 \le l < M$.
The final time $t_M$ in \eqref{y-t-sequence} is the amount of time that the system spends at the final state $y_M$
before time $s = T$. Clearly, we have $\sum\limits_{l=0}^{M} t_l = T$.
In the second representation, the indices of the reaction channels are used instead of the new state after each reaction. That is, we represent the same
trajectory $X(s)$, $s \in [0, T]$, as 
\begin{align}
  (i_0, t_0)\,,\,(i_1, t_1)\,,\, (i_2, t_2)\,, \dots, \,(i_{M-1}, t_{M-1})\,,
  \label{i-t-sequence}
\end{align}
where, for each $0 \le l < M$, $i_l \in \{1, 2, \dots, K\}$
denotes the index of the reaction channel and 
$t_l>0$ is the waiting time before the $(l+1)$-th reaction occurs, respectively. 
The two representations \eqref{y-t-sequence} and \eqref{i-t-sequence}
can be converted from one to the other, using the relation $v_{i_l} = y_{l+1} - y_l$,
which holds for $0 \le l < M$. 

In this work, we assume that a trajectory $X(s)$ of the system, represented
either as described in \eqref{y-t-sequence} or \eqref{i-t-sequence}, is available up to time $T$.
In other words, we assume that both the change of the state and the length
of the waiting time are known for each occurrence of the $M$ chemical reactions. 
From the trajectory data, we can deduce the total number of different reaction channels $K$, as well as the state change vector $v_i \in \mathbb{N}^n$
for each channel $\mathcal{C}_i$, $1 \le i \le K$. 
(Note, however, that when a certain channel $\mathcal{C}$ contains more than one reaction, from the data alone we will not be able to tell which reaction $\mathcal{R}$
  belonging to $\mathcal{C}$ has actually occurred when $\mathcal{C}$ is activated.)
For each $1 \le i \le K$, we denote by 
\begin{align}
0 \le l^{(i)}_1 <  l^{(i)}_2< \cdots < l^{(i)}_{M_i} < M\,,
  \label{l-sequqnece-for-channel-i}
\end{align}  
the indices $l$ such that $i_l = i$ in \eqref{i-t-sequence}, where $M_i \ge 0$ is the total number of times that the channel $\mathcal{C}_i$ has been activated within time $[0,T]$, and
  therefore the relation 
  {\small
  \begin{align}
    \sum_{i=1}^K M_i = M
    \label{sum-of-mi}
  \end{align}
  }
is satisfied. For brevity, let us introduce the notation
{\small
\begin{align}
    \mathbf{X}=\Big(M, (y_l, t_l)_{l=0,1,\dots,M}\Big)
    \label{traj-x}
\end{align}
}
to describe the trajectory of the system within the time interval $[0, T]$. 
The space consisting of all trajectories of the system on $[0,T]$ will be denoted by $\mathcal{D}_T$.
Note that, as a random variable, $\mathbf{X}$ contains both continuous and discrete components. 
Given a parameter vector $\bm{\omega}$, we consider the
chemical reaction system determined by the (parameterized) probability density
functions $\psi$, $p$ in \eqref{phi-p-density}, and define 
{\small
\begin{align}
\rho^{(T)}(\mathbf{X}\,|\,\bm{\omega}) = 
  \bigg[\prod_{l=0}^{M-1} \psi(t_l\,;\,y_l,\bm{\omega})\,p(i_l\,;\,y_l,
  \bm{\omega})\bigg] \exp\Big(-a(y_M\,;\, \bm{\omega})\,t_M\Big) \,,
    \label{f-t-pdf}
\end{align}}
  for the trajectory $\mathbf{X}$ in \eqref{traj-x}.
Let $\mathbf{E}$ denote the mathematical expectation with respect to the trajectories of the system. Then, for any bounded measurable function $g \colon \mathcal{D}_T\rightarrow \mathbb{R}$, we have
{\small
\begin{align}
  \mathbf{E} \, g(\mathbf{X})   = 
  \sum_{M=0}^{+\infty}
  \sum_{i_0=1}^{K}\sum_{i_1=1}^{K}\cdots\sum_{i_{M-1}=1}^K
  \int_{\big\{t_0+t_1+\cdots +t_M=T\big\}}\, g(\mathbf{X})\,
    \rho^{(T)}(\mathbf{X}\,|\,\bm{\omega})~ dt_0\,\cdots\,dt_{M-1}\,,
  \label{expectation-path-int}
\end{align}}
from which we can view the function $\rho^{(T)}(\mathbf{X}\,|\,\bm{\omega})$
as the probability density (distribution) of $\mathbf{X}$ on the space
$\mathcal{D}_T$ (we can indeed verify that $\mathbf{E} 1 = 1$). To simplify the notation, we will formally write 
\begingroup
\small
\setlength\abovedisplayskip{6pt}
\setlength\belowdisplayskip{6pt}
\begin{align}
\mathbf{E} \, g(\mathbf{X}) =
  \int_{\mathcal{D}_T} g(\mathbf{X})\, \rho^{(T)}(\mathbf{X}\,|\,\bm{\omega})\,
  d\mathbf{X} 
  \label{e-g-rho}
\end{align}
\endgroup
as the integration on the right-hand side of \eqref{expectation-path-int}.  Using \eqref{f-t-pdf} and \eqref{e-g-rho}, we can write down the likelihood
  function of the trajectory data as 
  {\small
\begin{align}
  \begin{split}
    \mathcal{L}^{(T)}(\bm{\omega}) &= 
    \mathcal{L}^{(T)}\big(\bm{\omega}\,\big|\,\mathbf{X}\big)\\
    &= \rho^{(T)}(\mathbf{X}\,|\,\bm{\omega}) \\
    &=\bigg[\prod_{l=0}^{M-1} \psi(t_l\,;\,y_l,\bm{\omega})\,p(i_l\,;\,y_l,
    \bm{\omega})\bigg] \exp\Big(-a(y_M\,;\, \bm{\omega})\,t_M\Big) \\
    &= \bigg[\prod_{l=0}^{M} \exp\Big(-a(y_l\,;\,\bm{\omega}) t_l\Big)\bigg] 
    \prod_{l=0}^{M-1} a_{i_l}(y_l\,;\,\bm{\omega}) \\
    &= \prod_{i=1}^K \mathcal{L}_i^{(T)}(\bm{\omega})\,,
  \end{split}
  \label{likelihood-l-y-t}
\end{align}
}
where 
{\small
\begin{align}
    \mathcal{L}_i^{(T)}(\bm{\omega}) = 
\bigg[\prod_{l=0}^{M} \exp\Big(-a_i(y_l\,;\,\bm{\omega}) t_l\Big)\bigg] 
\prod_{k=1}^{M_i} a_{i}(y_{l_k^{(i)}}\,;\,\bm{\omega})\,,  \qquad  1 \le i \le K\,,
\label{likelihood-ith}
\end{align}
}
can be considered as the likelihood function along the reaction channel $\mathcal{C}_i$.
\subsection{Learning task $\textbf{1}$: determine rate constants by maximizing the log-likelihood}
\label{subsec-learn-rate}
Assuming that the structure of the chemical reactions of the system is
known, we now consider the problem of determining the reaction rate
constant of each reaction. Note that the propensity function of each reaction $\mathcal{R}$ in Table~\ref{propensity-table} can be written as $\omega
\varphi(x)$, where $\varphi(x)$ is a polynomial of the system's state whose
specific form depends on the structure of $\mathcal{R}$, and $\omega$ is the rate constant.
Therefore, in the current learning task we assume that the propensity function of the $j$th chemical
reaction $\mathcal{R}_j$ in the system is given by 
\begin{align}
  a^*_{\mathcal{R}_j}(x) = \omega_j \varphi_j(x)\,, \quad 1 \le j \le N\,,
  \label{ar-expression}
\end{align}
where the nonnegative function $\varphi_j$ is known from the structure of
$\mathcal{R}_j$, and $\omega_j$ is the unknown rate
constant which we want to determine from trajectory data.

Let $\bm{\omega}$ be the vector 
\begin{align}
  \bm{\omega} = (\omega_{1}, \omega_{2}\,,\dots\,, \omega_{N})^\top
  \in \mathbb{R}^N\,,
  \label{omega-vector}
\end{align}
consisting of all the unknown rate constants, where $\omega_j \ge 0 $ for all $ 1 \le j \le N$.
For each channel~$\mathcal{C}_i$, $1 \le i \le K$, we also define the vector 
  \begingroup
\setlength\abovedisplayskip{6pt}
\setlength\belowdisplayskip{6pt}
\begin{align*}
  \bm{\omega}^{(i)} = (\omega_{j_1}, \omega_{j_2}, \dots,
  \omega_{j_{N_i}})^\top\,,\qquad \mbox{where}\quad \mathcal{I}_i = \{j_1, j_2, \dots,
  j_{N_i}\}\,,
\end{align*}
\endgroup
which consists of the rate constants of reactions belonging to $\mathcal{C}_i$.
Corresponding to \eqref{ar-expression}, the parameterized propensity functions in \eqref{a-sum-ai-true} are
\begingroup
\small
\setlength\abovedisplayskip{6pt}
\setlength\belowdisplayskip{6pt}
\begin{align}
  \begin{split}
    a_i\big(x\,;\bm{\omega}\big) &= a_i\big(x\,;\bm{\omega}^{(i)}\big) = \sum_{j\in \mathcal{I}_i} \omega_{j}
  \varphi_{j}(x)\,, 
    \quad 1 \le i \le K\,,\\[-2pt]
    \mbox{and}\quad a\big(x\,;\bm{\omega}\big) &= \sum_{j=1}^N \omega_{j} \varphi_{j}(x)\,,
  \end{split}
  \label{ai-omega}
\end{align}
\endgroup
while the optimal value of $\bm{\omega}$ is determined by maximizing the (logarithmic) likelihood functions in
\eqref{likelihood-l-y-t}, or equivalently, by solving the minimization problem 
\begin{align}
  \min_{\bm{\omega}} 
  \Big[-\ln\mathcal{L}^{(T)}(\bm{\omega})\Big]\,.
    \label{opt-problem-y-t}
\end{align}
With the trajectory data as defined in \eqref{y-t-sequence} and using the propensity functions in
\eqref{ai-omega}, the objective function above can be computed explicitly and we have 
\begingroup
\small
\begin{align}
  \begin{split}
    \ln\mathcal{L}^{(T)}(\bm{\omega})
    &=  -\sum_{l=0}^{M-1} \ln\bigg[\sum_{j\in \mathcal{I}_{i_l}}
    \omega_{j}\,\varphi_{j}(y_l)\bigg]
    + \sum_{l=0}^{M} t_l\bigg[ \sum_{j=1}^{N} \omega_{j}\,\varphi_{j}(y_l)\bigg]\,\\
    &=-\sum_{i=1}^{K}\sum_{k=1}^{M_i} \ln\bigg[\sum_{j\in\mathcal{I}_i}
    \omega_{j}\,\varphi_{j}(y_{l^{(i)}_k})\bigg]
    + \sum_{l=0}^{M} t_l\bigg[ \sum_{j=1}^N
    \omega_{j}\,\varphi_{j}(y_l)\bigg] \\
    &= -\sum_{i=1}^K \ln \mathcal{L}_i^{(T)}(\bm{\omega}^{(i)})\,.
  \end{split}
  \label{opt-problem-y-t-f}
\end{align}
\endgroup
In the above, we recall that the indices $l^{(i)}_k$ are defined in
\eqref{l-sequqnece-for-channel-i}, the logarithmic likelihood function
\begingroup
\small
\begin{align}
    \ln \mathcal{L}_i^{(T)}(\bm{\omega}^{(i)}) 
    =  \sum_{k=1}^{M_i} \ln\bigg[\sum_{j\in\mathcal{I}_i}
    \omega_{j}\,\varphi_{j}(y_{l^{(i)}_k})\bigg]
    - \sum_{l=0}^{M} t_l\bigg[ \sum_{j\in \mathcal{I}_i}
    \omega_{j}\,\varphi_{j}(y_l)\bigg] 
\label{log-likelihood-ith-omega}
\end{align}
\endgroup
only depends on $\bm{\omega}^{(i)}$ and should be compared to \eqref{likelihood-ith}. 
Note that the expressions above also imply that the minimization problem
\eqref{opt-problem-y-t} can be decomposed into $K$ minimization problems
\begin{align*}
  \min_{\bm{\omega}^{(i)}} 
  \Big[-\ln\mathcal{L}_i^{(T)}(\bm{\omega}^{(i)})\Big]\,, \qquad 1 \le i \le K\,,
\end{align*}
which can be solved separately.

For each index $j$, $1 \le j \le N$, such that $j \in \mathcal{I}_i$ for some $1 \le i \le K$, 
    the corresponding Euler--Lagrange equation of \eqref{opt-problem-y-t} is 
    \begingroup
    \small
\begin{align}
  \mathcal{M}^{(T)}_{j}(\bm{\omega}) = \frac{\partial
  \big(-\ln\mathcal{L}^{(T)}\big)}{\partial \omega_{j}}(\bm{\omega}) =  -\sum_{k=1}^{M_i}
  \frac{\varphi_{j}(y_{l^{(i)}_k})}{\sum\limits_{j'\in \mathcal{I}_i}
  \omega_{j'}\,\varphi_{j'}(y_{l^{(i)}_k})}
    + \sum_{l=0}^{M} t_l \, \varphi_{j}(y_l) = 0\,.
    \label{opt-problem-euler-lagrange}
\end{align} 
\endgroup
Differentiating one more time, we get the Hessian matrix of the objective function
in \eqref{opt-problem-y-t}
\begingroup
\small
\begin{align}
	\frac{\partial^2 \big(-\ln\mathcal{L}^{(T)}\big)}{\partial \omega_{j}\partial \omega_{j'}}(\bm{\omega}) = 
	\frac{\partial \mathcal{M}^{(T)}_{j}}{\partial \omega_{j'}}(\bm{\omega}) =
	\begin{cases}
\sum\limits_{k=1}^{M_i}
\frac{\varphi_{j}(y_{l^{(i)}_k})\,\varphi_{j'}(y_{l^{(i)}_k})}{\big(\sum\limits_{r\in\mathcal{I}_i}
	  \omega_{r} \varphi_{r}(y_{l^{(i)}_k})\big)^2}\,, &\quad  \mbox{if}~j,\, j' \in \mathcal{I}_i\,,\\ 
	  \,0\,, &\quad\mbox{otherwise}\,, 
	  \end{cases}
	  \label{hessian}
\end{align}
\endgroup
where $1 \le j, j' \le N$.

In order to study the optimization problem \eqref{opt-problem-y-t}--\eqref{opt-problem-y-t-f}, 
 let us introduce the matrix
 \begingroup
 \small
\begin{align}
      \Phi_i  = 
      \begin{bmatrix}
	\varphi_{j_1}(y_{l^{(i)}_1}) & \varphi_{j_2}(y_{l^{(i)}_1}) & \cdots &
	\varphi_{j_{N_i}}(y_{l^{(i)}_1}) \\
	\varphi_{j_1}(y_{l^{(i)}_2}) & \varphi_{j_2}(y_{l^{(i)}_2}) & \cdots &
	\varphi_{j_{N_i}}(y_{l^{(i)}_2}) \\
	\varphi_{j_1}(y_{l^{(i)}_3}) & \varphi_{j_2}(y_{l^{(i)}_3}) & \cdots &
	\varphi_{j_{N_i}}(y_{l^{(i)}_3}) \\
	\vdots & \vdots & \ddots & \vdots\\
	\varphi_{j_1}(y_{l^{(i)}_{M_i}}) &
	\varphi_{j_2}(y_{l^{(i)}_{M_i}}) & \cdots &
	\varphi_{j_{N_i}}(y_{l^{(i)}_{M_i}}) \\
      \end{bmatrix}
      \in \mathbb{R}^{M_i \times N_i}\,,
      \label{mat-phi-i}
    \end{align}
   \endgroup 
for each $1 \le i \le K$, where we have assumed that the index set $\mathcal{I}_i = \big\{j_1, j_2, \dots, j_{N_i} \big\}$.
We define $\Phi_{i,k} \in \mathbb{R}^{M_i}$ to be the $k$th column vector of $\Phi_i$ for $1 \le k \le N_i$ and thus obtain the following result concerning the solution of the optimization problem \eqref{opt-problem-y-t}--\eqref{opt-problem-y-t-f}.
    \begin{proposition}
      The following three conditions are equivalent.
      \begin{enumerate}[wide]
	\item
For each $1 \le i \le K$, the vectors $\Phi_{i,1}, \Phi_{i, 2}, \dots, \Phi_{i, N_i}$ are linearly independent.
	 \item
	   The function $-\ln\mathcal{L}^{(T)}(\bm{\omega})$ in \eqref{opt-problem-y-t-f} is strictly convex.
	\item
	  The optimization problem
	  \eqref{opt-problem-y-t}--\eqref{opt-problem-y-t-f} has a unique solution.
      \end{enumerate}
      \label{prop-uniqueness-conditions}
    \end{proposition}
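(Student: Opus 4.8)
The plan is to exploit the additive structure exhibited in \eqref{opt-problem-y-t-f}, namely $-\ln\mathcal{L}^{(T)}(\bm{\omega})=\sum_{i=1}^K\big(-\ln\mathcal{L}_i^{(T)}(\bm{\omega}^{(i)})\big)$, in which the $K$ summands depend on disjoint blocks of coordinates $\bm{\omega}^{(i)}$. Because of this separability, both the strict convexity in condition~2 and the uniqueness in condition~3 hold for $-\ln\mathcal{L}^{(T)}$ if and only if they hold for each channel term individually, and correspondingly the Hessian \eqref{hessian} is block diagonal with blocks indexed by the channels. I would therefore fix a channel $i$ and work with $f_i:=-\ln\mathcal{L}_i^{(T)}$, whose effective domain is the open convex set $\Omega_i=\{\bm{\omega}^{(i)}\ge 0:\ (\Phi_i\bm{\omega}^{(i)})_k>0,\ 1\le k\le M_i\}$ on which the logarithms in \eqref{log-likelihood-ith-omega} are defined.

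First I would record the exact form of the Hessian block. Writing $r_k=(\Phi_i\bm{\omega}^{(i)})_k$ and $D=\diag(r_1^{-2},\dots,r_{M_i}^{-2})$, the $i$th block in \eqref{hessian} is precisely $\Phi_i^\top D\,\Phi_i$, with $D$ positive definite on $\Omega_i$. For any vector $z$ one has $z^\top(\Phi_i^\top D\,\Phi_i)z=\sum_{k}r_k^{-2}(\Phi_i z)_k^2\ge 0$, with equality if and only if $\Phi_i z=0$. Hence $f_i$ is convex, and its Hessian is positive definite at every point of $\Omega_i$ exactly when $\Phi_i$ has trivial kernel, i.e. when its columns $\Phi_{i,1},\dots,\Phi_{i,N_i}$ are linearly independent. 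Summing over $i$ and invoking the standard Hessian characterization of strict convexity yields the equivalence of conditions~1 and~2.

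For condition~2 $\Rightarrow$ condition~3 I would combine uniqueness with existence. Strict convexity already gives at most one minimizer, so the point at issue is attainment. The two ways to leave $\Omega_i$ are to drive some $r_k\to 0^+$, where the barrier term $-\sum_k\ln r_k$ forces $f_i\to+\infty$, or to let $\|\bm{\omega}^{(i)}\|\to\infty$. For the latter I would use the linear dwell-time term in \eqref{log-likelihood-ith-omega}, whose coefficients are $c_{i,j}=\sum_{l=0}^M t_l\,\varphi_j(y_l)$; since linear independence of the columns rules out a zero column of $\Phi_i$, each $\varphi_j$ is strictly positive at some activation state, and as $t_l>0$ for $l<M$ this forces $c_{i,j}>0$ for all $j\in\mathcal{I}_i$. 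Along any feasible recession direction $u\ge 0$, $u\neq 0$, the growth $\langle c_i,u\rangle>0$ then dominates the at most logarithmic growth of the barrier, so $f_i$ is coercive and attains its unique minimum. Conversely, for condition~3 $\Rightarrow$ condition~1 I would argue by contraposition: if some $\Phi_i$ has a nonzero $z$ with $\Phi_i z=0$, then along the line $\bm{\omega}^{(i)}+tz$ all logarithm arguments $r_k$ are unchanged, so $f_i(\bm{\omega}^{(i)}+tz)=f_i(\bm{\omega}^{(i)})+t\,\langle c_i,z\rangle$ is affine in $t$; if $\langle c_i,z\rangle\neq 0$ then $f_i$ is unbounded below and no minimizer exists, while if $\langle c_i,z\rangle=0$ then $f_i$ is constant along $z$ and there is a continuum of minimizers. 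Either way the solution fails to be unique, contradicting condition~3.

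The routine part is the Hessian identity and the equivalence of conditions~1 and~2, which is linear algebra. I expect the main obstacle to be the existence half of condition~2 $\Rightarrow$ condition~3: one must verify coercivity on the correct constrained domain, which is exactly where the nonnegativity of the basis functions $\varphi_j$ and the positivity of the waiting times $t_l$ enter decisively, and one must control the recession directions that slide along the boundary of $\Omega_i$ rather than escaping transversally. A companion subtlety arises in condition~3 $\Rightarrow$ condition~1 when a candidate minimizer sits on the boundary of the nonnegative orthant, where the affine direction $z$ may be feasible on only one side; I would handle this by checking stationarity through the Euler--Lagrange equation \eqref{opt-problem-euler-lagrange}, whose contraction against $z$ forces $\langle c_i,z\rangle=0$ at any interior stationary point, so that the constant-along-$z$ alternative is the operative one and the non-uniqueness conclusion goes through.
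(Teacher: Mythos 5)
Your proposal is correct in substance and shares the paper's two central mechanisms: the identification of the Hessian \eqref{hessian} with the block matrices $\Phi_i^\top D\,\Phi_i$ built from \eqref{mat-phi-i} for the equivalence of conditions 1 and 2, and, for condition 3 $\Rightarrow$ condition 1, the observation that a kernel vector $z$ of some $\Phi_i$ leaves all logarithm arguments in \eqref{log-likelihood-ith-omega} unchanged, so the objective is affine along $z$. The paper phrases the latter as the explicit construction of a second parameter vector $\widetilde{\bm{\omega}}$ with the same likelihood, using the Euler--Lagrange equation \eqref{opt-problem-euler-lagrange} contracted against $\bm{\omega}$ and $\widetilde{\bm{\omega}}$ to equate the linear dwell-time terms; your dichotomy ``unbounded below or constant along $z$'' reaches the same contradiction a bit more directly. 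Where you genuinely go beyond the paper is condition 2 $\Rightarrow$ condition 3: the paper declares this implication obvious, which only covers uniqueness, whereas strict convexity alone does not guarantee attainment; your coercivity argument (positivity of the dwell-time coefficients $c_{i,j}$, obtained from the nonzero columns of $\Phi_i$ together with $t_l>0$, dominating the logarithmic barrier on the nonnegative orthant) supplies the missing existence half and is sound. Two points to tighten: for condition 2 $\Rightarrow$ condition 1 you should not appeal to ``Hessian positive definite iff strictly convex'' (false in one direction in general) but to the affine-along-$z$ observation you already make later, which gives not-1 $\Rightarrow$ not-2 directly; and the boundary subtlety you flag in condition 3 $\Rightarrow$ condition 1 (a minimizer with some $\omega_j=0$ at which the direction $z$ is feasible on one side only) is real, but the paper glosses over it identically by simply assuming the minimizer satisfies \eqref{opt-problem-euler-lagrange}, so it is a shared loose end rather than a defect of your argument alone.
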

    \begin{proof}
      (2) $\Rightarrow$ (3) is obvious.
      To show that (1) implies (2), it is sufficient to verify that the
      Hessian matrix of $-\ln\mathcal{L}^{(T)}$ is
      positive definite. Using \eqref{hessian}, for any vector $\bm{\eta} =
      (\eta_1, \eta_2, \dots, \eta_N)^\top \in \mathbb{R}^N$, we have
      \begingroup
      \small
\setlength\abovedisplayskip{6pt}
\setlength\belowdisplayskip{6pt}
      \begin{align}
	\sum_{j=1}^{N}\sum_{j'=1}^{N}
	\frac{\partial^2 \big(-\ln\mathcal{L}^{(T)}\big)}{\partial \omega_{j}\partial \omega_{j'}}
	\eta_{j}\eta_{j'} = \sum_{i=1}^K \sum\limits_{k=1}^{M_i} \frac{
	\Big(\sum\limits_{j\in \mathcal{I}_i}
      \eta_{j}\,\varphi_{j}(y_{l_k^{(i)}})\Big)^2}{\Big(\sum\limits_{j\in\mathcal{I}_i}
	\omega_{j}\,\varphi_{j}(y_{l^{(i)}_k})\Big)^2} \ge 0\,.
	  \label{hessian-f}
      \end{align}
      \endgroup
      Since the columns of $\Phi_i$ are linearly independent for
      each $i$, we conclude that \eqref{hessian-f} is zero if and only if $\bm{\eta}$ is a zero vector. 
      This implies that $-\ln\mathcal{L}^{(T)}$ is strictly convex.

      Finally, let us prove that (3) implies (1) by contradiction. Define       $\bm{\omega}$ to be the unique solution of the optimization problem \eqref{opt-problem-y-t}. 
      Assume that there is $i$, $1 \le i \le K$, such that the vectors $\Phi_{i,1},
      \Phi_{i,2}, \dots, \Phi_{i, N_i}$ are
      linearly dependent. As a result, we can find a vector
      $\widetilde{\bm{\omega}} = (\widetilde{\omega}_{1},
      \widetilde{\omega}_{2}, \dots, \widetilde{\omega}_{N})^\top \neq
      \bm{\omega}$, such that 
      \begingroup
\setlength\abovedisplayskip{6pt}
\setlength\belowdisplayskip{6pt}
      \begin{gather}
	\begin{aligned}
	   \widetilde{\omega}_{j} &=
	    \omega_{j},&& ~\forall~j \in \mathcal{I}_{i'},~ i'\neq i\,,\\
	    \mbox{and}\quad {\sum\limits_{j\in \mathcal{I}_i}
	    \widetilde{\omega}_{j}\,\varphi_{j}(y_{l^{(i)}_k})}
	    &= {\sum\limits_{j\in \mathcal{I}_i} \omega_{j}\, \varphi_{j}(y_{l^{(i)}_k})}\,,&& ~\forall~1 \le k \le M_i~.
	\end{aligned}
	\label{omega-and-wide-omega}
      \end{gather}
      \endgroup
      Since $\bm{\omega}$ satisfies \eqref{opt-problem-euler-lagrange}, the property \eqref{omega-and-wide-omega} implies
      that $\widetilde{\bm{\omega}}$ satisfies \eqref{opt-problem-euler-lagrange} as well. 
      Multiplying by $\omega_{j}$ (or $\widetilde{\omega}_{j}$) on both sides of
      \eqref{opt-problem-euler-lagrange} and summing up the indices, we get
      \begingroup\small
\setlength\abovedisplayskip{6pt}
\setlength\belowdisplayskip{6pt}
      \begin{align}
	\sum_{l=0}^{M} t_l\bigg[ \sum_{j=1}^{N}
	\widetilde{\omega}_{j}\,\varphi_{j}(y_l)\bigg] = 
	\sum_{l=0}^{M} t_l\bigg[ \sum_{j=1}^N
	\omega_{j}\,\varphi_{j}(y_l)\bigg] = M\,.
      \label{omega-linear-summation}
      \end{align}
     \endgroup 
      Combining \eqref{omega-and-wide-omega}, \eqref{omega-linear-summation},
      as well as the expressions in \eqref{opt-problem-y-t-f}, we obtain
      $-\ln\mathcal{L}^{(T)}(\bm{\omega}) = -\ln\mathcal{L}^{(T)}(\widetilde{\bm{\omega}})$, which contradicts the uniqueness of 
      $\bm{\omega}$. 
    \end{proof}

To distinguish the parameters obtained from solving the optimization problem
\eqref{opt-problem-y-t} and the true parameters of the system, we will 
define $\bm{\omega}^{(T)}$ to be the maximizer of $-\ln\mathcal{L}^{(T)}$ for
fixed time $T>0$ in what follows, and $\bm{\omega}^{*}$ to be the vector consisting of the true
parameters such that \eqref{ar-expression} holds.
In particular, when $N_i = 1$ and $\mathcal{I}_i = \{j\}$ , i.e., the channel $\mathcal{C}_i$ only contains one
reaction $\mathcal{R}_j$, the Euler--Lagrange equation
\eqref{opt-problem-euler-lagrange} can be solved analytically and we have 
\begingroup
\small
\begin{align}
  \omega_{j}^{(T)} = \frac{M_i}{\sum\limits_{l=0}^{M} t_l \,
  \varphi_{j}(y_l)}\,. 
  \label{opt-omega-ni1}
\end{align}
\endgroup
\subsection{Learning task $\textbf{2}$: determine the rate constants and the structure of chemical reactions using sparsity}
\label{subsec-unkown-structure}
In this subsection, we study the problem of learning the propensity functions of the chemical reaction networks from 
trajectory data when neither the structure of the chemical reactions nor their rate constants is known. 

First of all, we can figure out the total number $K$ of the reaction channels from
the trajectory data, as discussed in Subsection~\ref{subsec-space-trajectory}.
Now suppose that we are given $N$ candidate basis functions
\begingroup
\setlength\abovedisplayskip{6pt}
\setlength\belowdisplayskip{6pt}
\begin{align}
\varphi_j : \mathbb{N}^n \rightarrow \mathbb{R}\,, \qquad 1 \le j \le N\,,
  \label{library-functions}
\end{align}
\endgroup
together with $K$ index sets $\mathcal{I}_i = \{j_1, j_2, \dots, j_{N_i}\}$, $1 \le i \le K$, such that $N_i = |\mathcal{I}_i|$, 
\begingroup
\small
\setlength\abovedisplayskip{6pt}
\setlength\belowdisplayskip{6pt}
\begin{align}
  \bigcup_{i=1}^K \mathcal{I}_i = \Big\{1, 2, \dots, N\Big\}\,,  \qquad
  \mbox{and}\quad \mathcal{I}_i \bigcap \mathcal{I}_{i'} = \emptyset\,,~\mbox{if}~ i\neq i'\,.
  \label{set-i-repeat}
\end{align}
\endgroup
Accordingly, we introduce the vectors
   \begin{align}
       \bm{\omega} = (\omega_{1}, \omega_{2}\,,\dots\,, \omega_{N})^\top
     \in \mathbb{R}^N\,, \quad 
     \mbox{and}\quad \bm{\omega}^{(i)} = (\omega_{j_1}, \omega_{j_2}\,,\dots\,, \omega_{j_N})^\top
     \in \mathbb{R}^{N_i}\,. 
  \label{omega-vector-lambda}
\end{align}
For each channel $\mathcal{C}_i$, the propensity function $a_i^*$ in \eqref{a-sum-ai-true} will be approximated
using the basis functions $\varphi_j$, $j \in \mathcal{I}_i$, and the coefficients
in $\bm{\omega}^{(i)}$. More precisely, we define 
\begingroup
\setlength\abovedisplayskip{6pt}
\setlength\belowdisplayskip{6pt}
  \begin{align}
    a_i^{(\epsilon)}\big(x\,;\,\bm{\omega}\big) =
    a_i^{(\epsilon)}\big(x\,;\,\bm{\omega}^{(i)}\big) = G_\epsilon\Big(\sum_{j\in \mathcal{I}_i}
    \omega_{j} \varphi_{j}(x)\Big)\,,
  \label{ai-omega-lambda}
\end{align}
\endgroup
where $\epsilon>0$, and the function 
\begin{align}
  G_\epsilon(x) = \epsilon\ln(1 + e^{x/\epsilon})\,, \quad \epsilon>0\,,
  \label{g-eps}
\end{align}
is introduced (see Figure~\ref{fig-g-eps}), in order to guarantee the
non-negativity of $a^{(\epsilon)}_i$ for all vectors $\bm{\omega} \in \mathbb{R}^N$.
Corresponding to \eqref{ai-omega-lambda}, the total propensity function is given by
\begingroup
      \small
\setlength\abovedisplayskip{6pt}
\setlength\belowdisplayskip{6pt}
\begin{align}
  a^{(\epsilon)}\big(x\,;\,\bm{\omega}\big) = \sum_{i=1}^K G_\epsilon\Big(\sum_{j\in \mathcal{I}_i} \omega_{j} \varphi_{j}(x)\Big)\,.
  \label{prop-a-lambda}
\end{align}
\endgroup
\begin{figure}[htpb]
  \includegraphics[width=0.42\textwidth]{./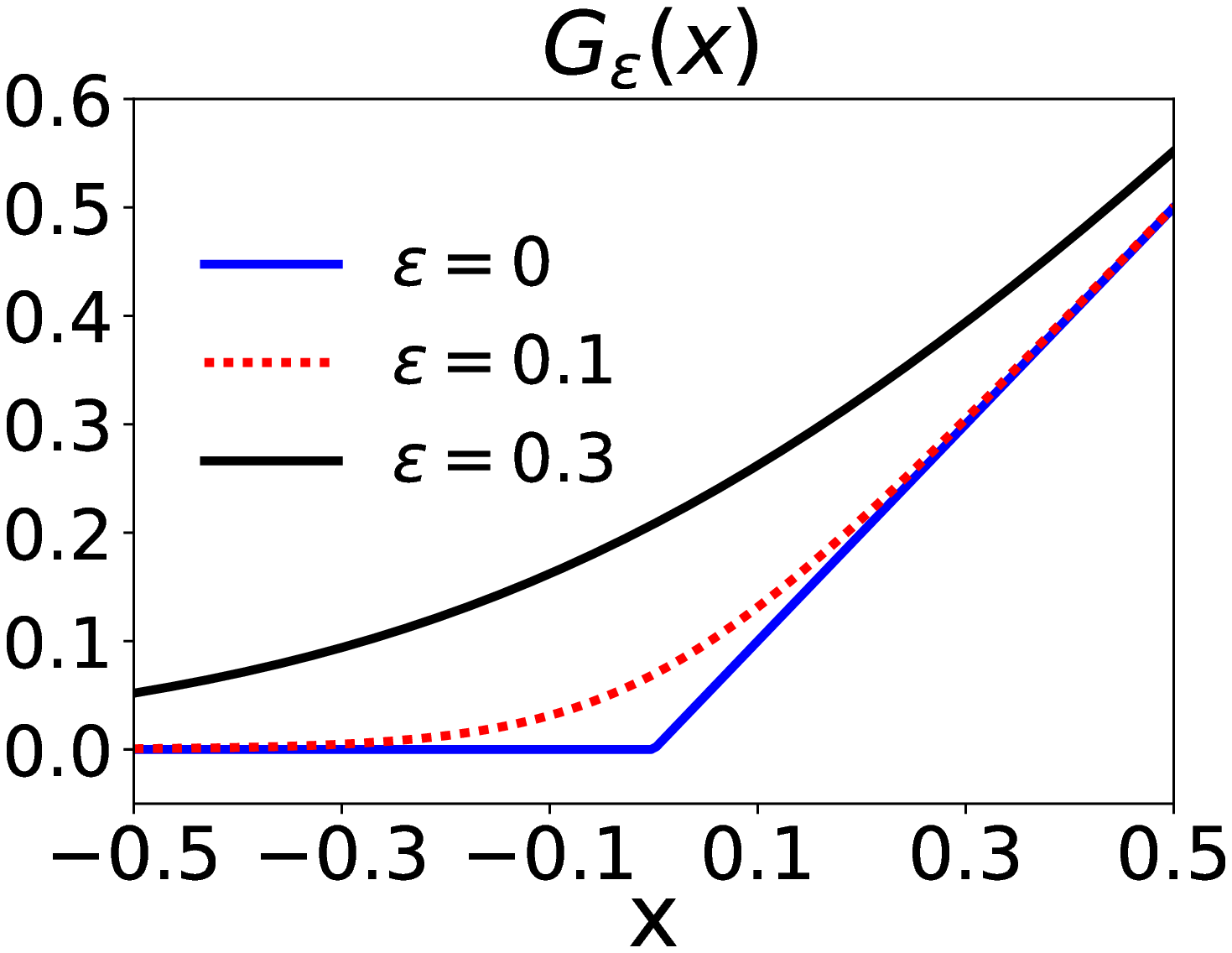}
  \includegraphics[width=0.42\textwidth]{./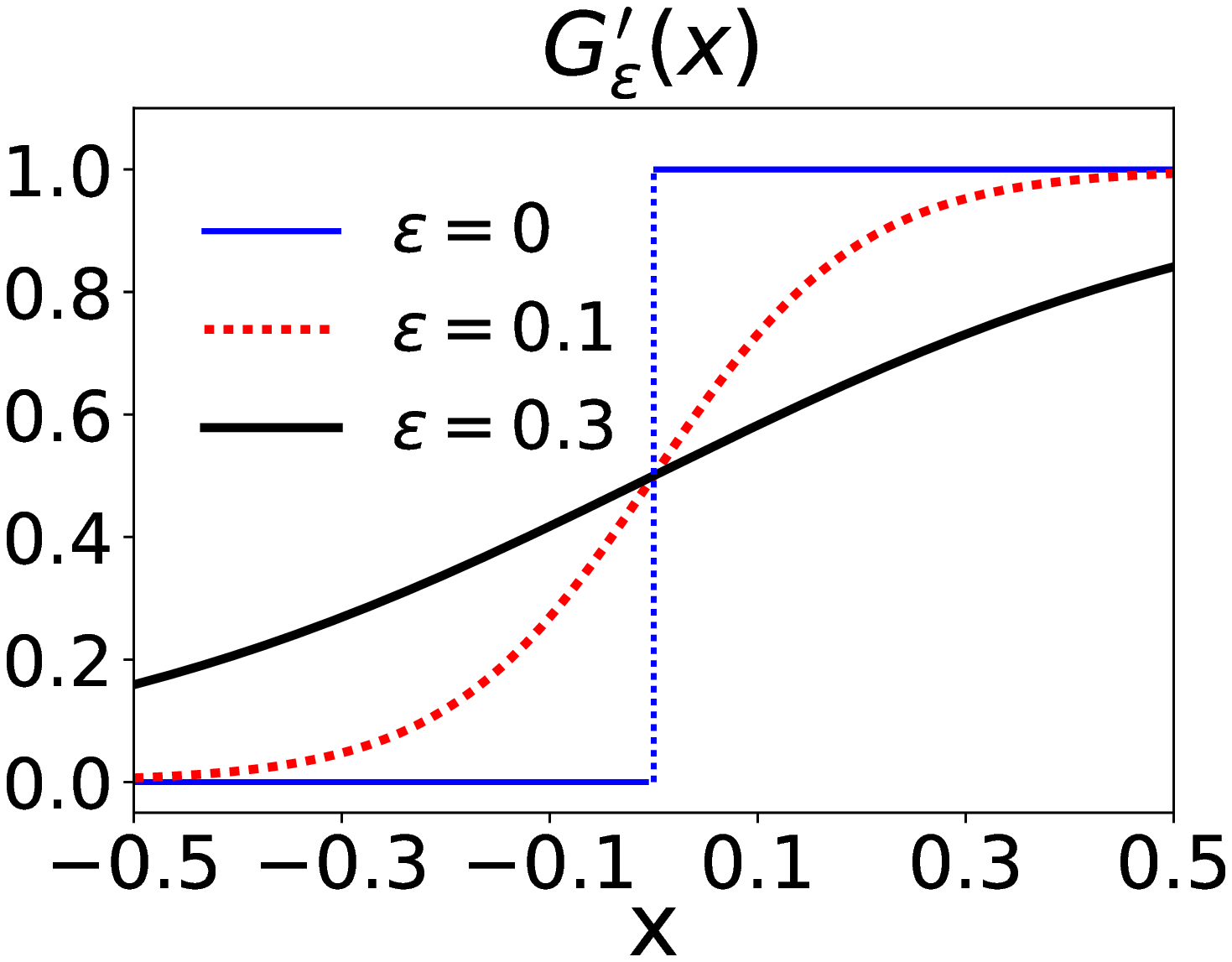}
  \centering
  \caption{Profiles of $G_\epsilon$ in \eqref{g-eps} and its derivative $G'_\epsilon$. 
  For $\epsilon = 0$, we define $G_0(x) = \lim\limits_{\epsilon \rightarrow 0+}
  G_\epsilon(x) = \max(x,0)$. See Remark~\ref{rmk-g-eps} and Appendix~\ref{app-1}
  for the properties of $G_\epsilon$. \label{fig-g-eps}}
\end{figure}

Since the propensity functions of reactions in many applications typically
have a simple form (Table~\ref{propensity-table}), there is likely redundancy
in the basis functions and therefore we can assume that
the unknown vector $\bm{\omega}$ only has a few nonzero entries (and is thus sparse). 
With this observation in mind, we propose to determine $\bm{\omega}$ by maximizing the (logarithmic) likelihood function under the
sparsity assumption, or, equivalently, by solving the nonlinear sparse minimization problem 
\begingroup
\setlength\abovedisplayskip{6pt}
\setlength\belowdisplayskip{6pt}
\begin{align}
  \min_{\bm{\omega}} \Big[-\ln\mathcal{L}^{(T,\epsilon)}(\bm{\omega})\Big]\,,\quad \bm{\omega}~\mbox{is sparse}\,, 
  \label{opt-problem-l0}
\end{align}
\endgroup
where $\mathcal{L}^{(T, \epsilon)}(\bm{\omega})$ is the likelihood function
\eqref{likelihood-l-y-t} with the propensity functions $a_i=a_i^{(\epsilon)},
a=a^{(\epsilon)}$ in \eqref{ai-omega-lambda} and \eqref{prop-a-lambda}. Explicitly, we have 
\begingroup
\small
\setlength\abovedisplayskip{6pt}
\setlength\belowdisplayskip{6pt}
\begin{align}
    -\ln\mathcal{L}^{(T, \epsilon)}(\bm{\omega})
    =  -\sum_{l=0}^{M-1} \ln G_\epsilon\bigg(\sum_{j\in \mathcal{I}_{i_l}} \omega_{j}\,\varphi_{j}(y_l)\bigg)
    + \sum_{l=0}^{M} t_l 
    \bigg[\sum_{i=1}^K G_\epsilon\Big(\sum_{j\in \mathcal{I}_i} \omega_{j} \varphi_{j}(y_l)\Big)\bigg]\,.
    \label{likelihood-t-eps}
\end{align}
\endgroup
If we quantify the sparsity of $\bm{\omega}$ using the $l^1$ norm (denoted by $\|\cdot\|_1$), then \eqref{opt-problem-l0} results in
\begingroup
\setlength\abovedisplayskip{6pt}
\setlength\belowdisplayskip{6pt}
\begin{align}
  \min_{\bm{\omega}} \Big(-\frac{1}{T}\ln\mathcal{L}^{(T,\epsilon)}(\bm{\omega}) +
  \lambda \|\bm{\omega}\|_1\Big)\,.
  \label{opt-problem-1}
\end{align}
\endgroup
In \eqref{opt-problem-1}, the log-likelihood function is rescaled by $1/T$
(this scaling is suggested by the analysis in Section~\ref{sec-asymptotics}), and the constant
$\lambda = \lambda(T) > 0$, which measures the strength of the sparsity
regularization, can be chosen depending on $T$. 

Similar to the problem \eqref{opt-problem-y-t} in the previous subsection, the minimizer of \eqref{opt-problem-1} 
can be computed by solving $K$ sparse minimization problems
\begin{equation}
  \begin{split}
    &\min_{\bm{\omega}^{(i)}} \Big(-\frac{1}{T}\ln\mathcal{L}_i^{(T,
  \epsilon)}(\bm{\omega}^{(i)}) + \lambda
  \|\bm{\omega}^{(i)}\|_1\Big)\,,\qquad 1 \le i \le K\,,
  \end{split}
    \label{opt-problem-1-sub}
\end{equation}
separately, where 
\begingroup
\small
\begin{equation}
\ln\mathcal{L}_i^{(T, \epsilon)}(\bm{\omega}^{(i)})
    =  \sum_{k=1}^{M_i} \ln G_\epsilon\bigg(\sum_{j\in \mathcal{I}_{i}}
\omega_{j}\,\varphi_{j}(y_{l_k^{(i)}})\bigg)
    - \sum_{l=0}^{M} t_l G_\epsilon\Big(\sum_{j\in \mathcal{I}_i} \omega_{j} \varphi_{j}(y_l)\Big)\,.
    \label{likelihood-t-eps-sub}
  \end{equation}
  \endgroup
In practice, we find that \eqref{opt-problem-1}, or equivalently \eqref{opt-problem-1-sub}, 
can be efficiently solved by FISTA proposed in~\cite{fista2009}, especially
when preconditioning is applied (see Remark~\ref{rmk-preconditioning} below
and examples in Section~\ref{sec-example}). 
The main algorithmic steps of FISTA are provided in Algorithm~\ref{fista-algo} in Appendix~\ref{app-0}.

We obtain the following result concerning the minimization problems \eqref{opt-problem-1} and \eqref{opt-problem-1-sub}.
\begin{proposition}
  Suppose $\epsilon, \lambda > 0$. The objective functions of the optimization problems
  \eqref{opt-problem-1} and \eqref{opt-problem-1-sub} are strictly convex. 
\end{proposition}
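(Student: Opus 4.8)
The plan is to first reduce the claim to the per-channel subproblems and then establish strict convexity of each via a positive-definite Hessian, exploiting the two convexity properties of $G_\epsilon$ recorded in Appendix~\ref{app-1}. Since the variable blocks $\bm{\omega}^{(i)}$ attached to distinct channels are disjoint and the objective of \eqref{opt-problem-1} is the sum $\sum_{i=1}^K$ of the objectives of \eqref{opt-problem-1-sub} (each depending only on its own block), strict convexity of \eqref{opt-problem-1} follows once every subproblem in \eqref{opt-problem-1-sub} is shown to be strictly convex. Moreover, $\lambda\|\bm{\omega}^{(i)}\|_1$ is convex for $\lambda>0$, and the sum of a convex function and a strictly convex function is strictly convex, so it suffices to prove that the smooth term $-\frac{1}{T}\ln\mathcal{L}_i^{(T,\epsilon)}$ in \eqref{likelihood-t-eps-sub} is strictly convex for each $1\le i\le K$.

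I would then invoke the elementary properties of $G_\epsilon$ from Appendix~\ref{app-1}: $G_\epsilon$ is smooth with $G_\epsilon''>0$ everywhere (hence strictly convex) and is log-concave, i.e.\ $(\ln G_\epsilon)''<0$ everywhere, so that $-\ln G_\epsilon$ is strictly convex as well. Writing $\bm{\eta}^{(i)}=(\eta_j)_{j\in\mathcal{I}_i}$ for an arbitrary direction and differentiating \eqref{likelihood-t-eps-sub} twice (exactly as in the derivation of \eqref{hessian-f}), the quadratic form associated with the Hessian of $-\frac{1}{T}\ln\mathcal{L}_i^{(T,\epsilon)}$ at any $\bm{\omega}^{(i)}$ is
\begin{align*}
  &\frac{1}{T}\sum_{k=1}^{M_i}\big(-\ln G_\epsilon\big)''\Big(\sum_{j\in\mathcal{I}_i}\omega_j\varphi_j(y_{l_k^{(i)}})\Big)\Big(\sum_{j\in\mathcal{I}_i}\eta_j\varphi_j(y_{l_k^{(i)}})\Big)^2 \\
  &\quad+\frac{1}{T}\sum_{l=0}^{M} t_l\,G_\epsilon''\Big(\sum_{j\in\mathcal{I}_i}\omega_j\varphi_j(y_l)\Big)\Big(\sum_{j\in\mathcal{I}_i}\eta_j\varphi_j(y_l)\Big)^2\,.
\end{align*}
Both prefactors $\big(-\ln G_\epsilon\big)''$ and $G_\epsilon''$ are strictly positive and the waiting times satisfy $t_l>0$ for $0\le l<M$, so the expression is a sum of nonnegative terms and is therefore $\ge 0$; this re-proves convexity through the standard fact that a convex scalar function composed with the affine maps $\bm{\omega}^{(i)}\mapsto\sum_{j\in\mathcal{I}_i}\omega_j\varphi_j(y_l)$ is convex.

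The decisive step, which I expect to be the main obstacle, is to rule out degeneracy of this quadratic form. It vanishes for some $\bm{\eta}^{(i)}$ only if $\sum_{j\in\mathcal{I}_i}\eta_j\varphi_j(y_l)=0$ at every visited state $y_l$ (at least for all $l$ with $t_l>0$), because $G_\epsilon''>0$ never degenerates. This is precisely where the second learning task differs from Proposition~\ref{prop-uniqueness-conditions}: there the propensity was linear in $\bm{\omega}$, so the waiting-time sum contributed nothing to the Hessian \eqref{hessian} and strict convexity required linear independence of the columns $\Phi_{i,1},\dots,\Phi_{i,N_i}$ of $\Phi_i$ in \eqref{mat-phi-i}, i.e.\ of the basis functions evaluated at the $M_i$ activation states of channel $\mathcal{C}_i$ only; here the strict convexity of $G_\epsilon$ makes the waiting-time sum over all $M+1$ visited states contribute, so one only needs the weaker nondegeneracy that the functions $\{\varphi_j\}_{j\in\mathcal{I}_i}$ are linearly independent when restricted to the visited states. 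Under this (generically satisfied) condition the quadratic form is strictly positive for every $\bm{\eta}^{(i)}\neq 0$, so the Hessian is positive definite at every $\bm{\omega}^{(i)}$, whence $-\frac{1}{T}\ln\mathcal{L}_i^{(T,\epsilon)}$ is strictly convex and, by the reduction above, so is the full objective of \eqref{opt-problem-1}.
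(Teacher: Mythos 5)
Your argument follows the same route as the paper's proof---reduce to the per-channel subproblems \eqref{opt-problem-1-sub}, observe via \eqref{g-1st-2rd} and \eqref{log-g-eps-derivatives} that $G_\epsilon$ and $-\ln G_\epsilon$ are strictly convex, and add the convex $\ell^1$ term---but you push it one step further than the paper does, and that step is substantive. The paper's proof passes directly from ``$G_\epsilon$ and $-\ln G_\epsilon$ are strictly convex'' to ``$-\ln\mathcal{L}_i^{(T,\epsilon)}$ in \eqref{likelihood-t-eps-sub} is strictly convex,'' which tacitly uses that a strictly convex scalar function precomposed with the affine maps $\bm{\omega}^{(i)}\mapsto\sum_{j\in\mathcal{I}_i}\omega_j\varphi_j(y_l)$ stays strictly convex; that is true only when these maps jointly separate directions, i.e., when the functions $\{\varphi_j\}_{j\in\mathcal{I}_i}$ restricted to the visited states $y_0,\dots,y_M$ are linearly independent---precisely the nondegeneracy condition your Hessian computation isolates. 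Your quadratic form is correct, and your diagnosis of where it can vanish is the right one: if $\sum_{j\in\mathcal{I}_i}\eta_j\varphi_j(y_l)=0$ for every visited $y_l$ and some $\bm{\eta}\neq 0$, then $-\ln\mathcal{L}_i^{(T,\epsilon)}$ is constant in the direction $\bm{\eta}$, and the $\ell^1$ penalty cannot rescue strict convexity because $\|\cdot\|_1$ is affine along any segment contained in a single open orthant, so the full objective of \eqref{opt-problem-1-sub} is affine on such a segment. In other words, the proposition as stated is slightly too strong, your added hypothesis is genuinely necessary, and your proof is the paper's argument carried out rigorously. Your comparison with Proposition~\ref{prop-uniqueness-conditions} is also apt: there strict convexity of \eqref{hessian-f} required linear independence of the columns of $\Phi_i$ in \eqref{mat-phi-i}, built only from the $M_i$ activation states, whereas here the everywhere-positive $G_\epsilon''$ lets all $M+1$ visited states contribute, so the required independence is over a larger set of evaluation points and is correspondingly weaker. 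The one cosmetic caveat is that your statement proves a conditional version of the proposition rather than the unconditional claim in the text; but since the unconditional claim is false in the degenerate case, that is a feature, not a bug.
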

\begin{proof}
  It is sufficient to consider the objective function in \eqref{opt-problem-1-sub}.
  By straightforward calculations (for instance, see \eqref{g-1st-2rd} and
  \eqref{log-g-eps-derivatives} in Appendix~\ref{app-1}), we can verify that both 
  $-\ln G_\epsilon$ and $G_\epsilon$ are strictly convex functions. 
  Therefore, the function $-\ln\mathcal{L}^{(T,\epsilon)}_i$ in
  \eqref{likelihood-t-eps-sub} is strictly convex.
 Since the norm $\|\cdot\|_1$ is convex as well, we conclude that the
 objective function in \eqref{opt-problem-1-sub} is strictly convex. 
\end{proof}

Let $\bm{\omega}^{(T, \epsilon, \lambda)}$ denote the unique minimizer of the problem \eqref{opt-problem-1}.  Similar to the Euler--Lagrange equation \eqref{opt-problem-euler-lagrange},
 in the current case $\bm{\omega}^{(T, \epsilon, \lambda)}$ satisfies the
 inclusion relation~\cite{Bagirov-nonsmooth,clarke1990optimization}
 \begingroup
\small
\setlength\abovedisplayskip{6pt}
\setlength\belowdisplayskip{6pt}
\begin{align}
  \frac{1}{T}\mathcal{M}_j^{T,\epsilon}(\bm{\omega}) \in -\lambda\,\partial |\omega_j|
  \,,\quad  \forall\, 1 \le j \le N\,,
  \label{euler-lagrange-lambda}
\end{align}
\endgroup
where  
\begingroup
\small
\begin{align}
  \mathcal{M}^{(T,\epsilon)}_{j}(\bm{\omega}) 
  &= \frac{\partial \big(-\ln\mathcal{L}^{(T,\epsilon)}\big)}{\partial
  \omega_{j}}(\bm{\omega})\nonumber \\
  &=  -\sum_{k=1}^{M_i}
  \frac{
    G_\epsilon'\Big(\sum\limits_{j'\in \mathcal{I}_i} \omega_{j'}\,\varphi_{j'}(y_{l^{(i)}_k})\Big)
    \varphi_{j}(y_{l^{(i)}_k})}{G_\epsilon\Big(\sum\limits_{j'\in \mathcal{I}_i}
  \omega_{j'}\,\varphi_{j'}(y_{l^{(i)}_k})\Big)} + \sum_{l=0}^{M} t_l \,
  \bigg[G_\epsilon'\Big(\sum\limits_{j'\in \mathcal{I}_i}
  \omega_{j'}\,\varphi_{j'}(y_{l})\Big) \varphi_{j}(y_l)\bigg]\,, \label{log-likelihood-derivative-lambda}
\end{align}
\endgroup
for $j \in \mathcal{I}_i$, and $\partial |\omega_j|$ is the subdifferential of the absolute value function
$|\omega_j|$, defined by  
\begingroup
\small
\setlength\abovedisplayskip{6pt}
\setlength\belowdisplayskip{6pt}
\begin{align*}
\partial |\omega_j| = 
	  \begin{cases}
	    ~\{1\}\,, & \omega_j > 0 \,, \\
	    ~[-1, 1]\,, & \omega_j = 0\,,\\
	    ~\{-1\}\,, & \omega_j < 0 \,. 
	  \end{cases}
\end{align*}
\endgroup
Finally, let $\mathcal{M}^{(T,\epsilon)}$ be the vector in $ \mathbb{R}^N$
whose components are defined in \eqref{log-likelihood-derivative-lambda} and define the set
  $\partial |\bm{\omega}| = \big\{\bm{v} \in \mathbb{R}^N~\big|~ \bm{v} = (v_1, v_2,
  \dots, v_N)^\top\,,~v_j \in \partial |\omega_j|\,,~1 \le j \le N\big\}$\,.
We can express the condition \eqref{euler-lagrange-lambda} in vector form as 
\begin{align}
  \frac{1}{T}\mathcal{M}^{T,\epsilon}(\bm{\omega}) \in -\lambda\,\partial |\bm{\omega}| \,.
  \label{euler-lagrange-lambda-vector}
\end{align}
The characterization above of the minimizers will be used in the analysis in Section~\ref{sec-asymptotics}. 

We conclude this section with the following remarks. 
\begin{remark}[Role of the function $G_\epsilon$]
  In principle, we would like to allow both the basis functions $\varphi_j$
  and the unknown coefficients $\omega_j$ to be either positive or negative. 
  By introducing the function $G_\epsilon$ in \eqref{likelihood-t-eps},
  we avoid imposing many inequality constraints which would be otherwise
  needed in order to guarantee that the log-likelihood function in
  \eqref{likelihood-t-eps} is well-defined. 
    The properties of $G_\epsilon$ in \eqref{g-eps} are discussed in
      Appendix~\ref{app-1}. In particular, we have $\lim\limits_{\epsilon
      \rightarrow 0+} G_\epsilon(x) = \max(x,0)$, uniformly $\forall~x \in
      \mathbb{R}$. For this reason, we define $G_0(x) =
      \max(x,0)$.
      \label{rmk-g-eps}
\end{remark}
\begin{remark}[Choice of basis functions]
  \begin{enumerate}[wide]
    \item
  In the sparse minimization problem \eqref{opt-problem-1}, 
  the vector $\bm{\omega}$ contains all the $N$ coefficients $\omega_j$, and 
  the corresponding $N$ basis functions $\varphi_j$ in \eqref{library-functions} are involved. 
This formulation makes the notations simpler and is also convenient for analysis, particularly in Section~\ref{sec-asymptotics}.
Numerically, on the other hand, the coefficient vectors $\bm{\omega}^{(i)}$ in
      \eqref{omega-vector-lambda} can be computed separately by solving the
      minimization problems \eqref{opt-problem-1-sub}, $1 \le i \le K$, with the same set of basis functions
  $\phi_1,\,\phi_2, \dots, \phi_L$, $L>0$, for all the $K$ channels.
      In this case, corresponding to the formulation adopted at the beginning
      of this subsection where all $N$ coefficients are put together, we define the index sets 
   $\mathcal{I}_i = \big\{(i-1)L+1\,,~(i-1)L+2\,,\, \dots\,,~iL\big\}$,\, $1
      \le i \le K$, and for each $j \in \mathcal{I}_i$, we define the function 
      \begingroup
\setlength\abovedisplayskip{6pt}
\setlength\belowdisplayskip{6pt}
  \begin{align}
    \varphi_j = \phi_k\,, \quad \mbox{when}~~j = (i-1)L + k\,, \quad 1 \le k \le L\,.
    \label{varphi-j-phi-lambda}
  \end{align}
  \endgroup
      Accordingly, we have 
    $\bm{\omega}^{(i)} = \big(\omega_{(i-1)L + 1},\, \omega_{(i-1)L + 2},\, \dots, \omega_{(i-1)L + L}\big)^\top$, 
  and the propensity function in \eqref{ai-omega-lambda} can be written more transparently as
      \begingroup
\setlength\abovedisplayskip{6pt}
\setlength\belowdisplayskip{6pt}
  \begin{align*}
    a_i^{(\epsilon)}\big(x\,;\,\bm{\omega}\big) =
    a_i^{(\epsilon)}\big(x\,;\,\bm{\omega}^{(i)}\big) =
    G_\epsilon\Big(\sum_{k=1}^{L} \omega_{(i-1)L + k}\,\phi_{k}(x)\Big)\,.
\end{align*} 
\endgroup
\item
  While we are mainly interested in chemical reaction systems, the same
      learning approach can be applied to other types of continuous-time
      Markov chains whose jump distributions are state-dependent. In
      particular, for chemical reaction systems that obey law of mass-action, 
      according to Table~\ref{propensity-table} we may choose $\varphi_j$ from
      the polynomials 
\begin{align}
  \begin{split}
  & 1\,, \quad x^{(1)}\,,\quad x^{(2)}\,,\quad \dots\,, \quad x^{(n)}\,,\quad
   x^{(1)}x^{(2)}\,,\quad x^{(1)}x^{(3)}\,,\quad\dots\,, \quad
  x^{(1)}x^{(n)}\,,\quad \\
    & x^{(2)}x^{(3)}\,,\quad \dots\,, \quad x^{(n-1)}x^{(n)}\,,   \dots\,,
  \end{split}
  \label{poly-basis}
\end{align}
where $x^{(k)}$ denotes the $k$th component of the state $x = (x^{(1)}, x^{(2)}, \dots, x^{(n)})^\top$,
based on the knowledge about the potential chemical reactions that are possibly involved in the system. 
  \end{enumerate}
  \label{rmk-choice-of-basis}
\end{remark}
  \begin{remark}[Preconditioning]
In concrete applications, due to the complexity of the trajectory data, 
      different basis functions may take values that are of different orders
      of magnitude. As a result, the objective functions in \eqref{opt-problem-1-sub}, or equivalently in \eqref{opt-problem-1}, 
      may become inhomogeneous along different components $\omega_j$. This leads to numerical difficulties in solving \eqref{opt-problem-1-sub} 
      since a small step-size has to be used as a result of the strong dependence of the objective function on the change of $\bm{\omega}$
      along certain directions (i.e., large gradient, ill-conditioned).
A simple way to alleviate this numerical issue is to precondition the problems
\eqref{opt-problem-1-sub} by rescaling the basis functions. Equivalently, let $c_j$ denote
      the rescaling constants, where $c_j>0$, $1 \le j \le N$.
      Instead of \eqref{opt-problem-1-sub}, we can compute the minimizer
      $\overline{\bm{\omega}}^{(i)}$ of the rescaled sparse minimization problem 
      \begingroup
      \small
\begin{align}
  \min_{\overline{\bm{\omega}}^{(i)}} \bigg\{\!-\frac{1}{T}\sum_{k=1}^{M_i} \ln G_\epsilon\bigg(\sum_{j\in
    \mathcal{I}_{i}}
    \frac{\overline{\omega}_{j}}{c_j}\,\varphi_{j}\big(y_{l_k^{(i)}}\big)\bigg)
    + \frac{1}{T}\sum_{l=0}^{M} t_l 
    G_\epsilon\Big(\sum_{j\in \mathcal{I}_i}
    \frac{\overline{\omega}_{j}}{c_j} \varphi_{j}(y_l)\Big)
  + \lambda \sum_{j\in \mathcal{I}_i} \frac{|\overline{\omega}_j|}{c_j}\bigg\}\,,
  \label{opt-problem-1-rescaled}
    \end{align}
    \endgroup
where the vector $\overline{\bm{\omega}}^{(i)}$ consists of $\overline{\omega}_j$, $j \in \mathcal{I}_i$. 
    Then it is easy to verify that the minimizer $\bm{\omega}^{(i)}$ of \eqref{opt-problem-1-sub}
    can be recovered from $\omega_j = \frac{\overline{\omega}_j}{c_j}$, for $j \in \mathcal{I}_i$.
    By properly choosing the constants $c_j$ based on analyzing the trajectory data, 
    we can expect that minimizing \eqref{opt-problem-1-rescaled} will be
    easier compared to \eqref{opt-problem-1-sub}. Readers are referred to
    Section~\ref{sec-example} for further discussions on this issue and concrete examples.
  \label{rmk-preconditioning}
\end{remark}
\begin{remark}[Possible extensions]
  Below we discuss several possible generalizations.
  \begin{enumerate}[wide]
    \item
      So far, we have assumed that the evolution of the system is fully observed. In concrete applications, sometimes a small subset of species in the
      system is supposed to be able to describe the system's
      dynamics~\cite{nssa-weinan-jcp2005}. Correspondingly, it may happen that the
      trajectory data is only partially observed for these ``important'' species. 
      In this case, one can still apply the second
      learning approach in this subsection to learn the system and the outcome of the optimization 
      problem \eqref{opt-problem-1} will be an effective dynamics for these selected ``important'' species.
      However, we point out that, since the effective reactions among these ``important'' species do not necessarily obey the law of mass-action any more, 
      it may be important to include other types of basis functions (e.g.,
      rational functions for Michaelis--Menten type kinetics~\cite{murray2007mathematical}) together with the polynomial basis in \eqref{poly-basis} in order to obtain a good
      approximation of the effective dynamics. 
    \item
      It is straightforward to generalize the analysis to the case where
      multiple trajectories of the system are available. We refer the readers to the
      numerical examples in Section~\ref{sec-example} for details.
    \item
      In this work, in particular in Section~\ref{sec-asymptotics}, we are mainly interested
      in the theoretical justification of the two learning approaches in the infinite-data limit, i.e., $T\rightarrow +\infty$. 
      The numerical examples in Section~\ref{sec-example} also mainly serve this purpose.
	Regarding the choice of the sparsity parameter $\lambda$, one can expect that a large $\lambda$ will increase the sparsity of the solution, but at the same time will also introduce bias in the prediction. Therefore, in the numerical experiments in Section~\ref{sec-example}, we empirically choose $\lambda$ in such a way that the sparsity and accuracy of the solution are balanced.
      In practice, instead of choosing a fixed $\lambda>0$ in
      \eqref{opt-problem-1} for coefficients in front of all basis functions, it is helpful to
      consider different values of $\lambda$ for different coefficients and
      to tune the parameter(s) $\lambda$ carefully using the cross-validation technique~\cite{cross-validation-geisser,hastie2009elements}.
      See~\cite{sindy-sde} for more details.
  \end{enumerate}
  \label{rmk-possible-extension}
\end{remark}

\section{Examples}
\label{sec-example}
In this section, we study the learning tasks discussed in Section~\ref{sec-inverse} with three concrete numerical examples.
\subsection{Example 1}
\label{subsec-ex1}
In the first example, we study the chemical reaction system given by
Table~\ref{ex1-cr-table}, where two different species $A, B$ are involved in
$4$ chemical reactions. The propensity functions of these $4$ reactions depend on both the state $x=(x^{(1)}, x^{(2)})^\top$ of the system, i.e., the copy-numbers of the species $A$ and $B$, and the rate constants $\kappa_i$, $i=1,2,3,4$.

To study the two learning tasks discussed in Section~\ref{sec-inverse}, we fix the
parameters 
      \begingroup
\setlength\abovedisplayskip{6pt}
\setlength\belowdisplayskip{6pt}
\begin{align}
(\kappa_1, \kappa_2, \kappa_3, \kappa_4) = (1.0,~0.1,~1.0,~0.9)\,,
\label{ex1-true-kappa}
\end{align}
\endgroup
and $Q=100$ trajectories of the system are generated using the stochastic simulation
algorithm (SSA)~\cite{Gillespie1976_ssa, Gillespie1977_ssa, ssck_gillespie}. Each trajectory starts from the same initial state
$x=(20, 10)^\top$ at time $t=0$ and is simulated until time $T=10$ ($5$ of the $100$ trajectories are shown in Figure~\ref{fig-traj-ex1} for illustration). 
From Table~\ref{ex1-cr-table}, it is clear that different reactions belong to
different reaction channels and therefore there are in total $4$ reaction channels in the reaction
network. For the quantities introduced in Section~\ref{sec-forward}, we obtain $N_i=1$ and $K=N=4$.
After processing the trajectory data, we find that the activation numbers of
the $4$ reaction channels
within these $100$ trajectories are $2296$, $1778$, $2777$, and $2135$,
respectively, as shown in Table~\ref{ex1-info-data-table}.

\begin{table}
    \caption{\label{ex1-cr-table} Example 1. Chemical reaction system
    consists of two species $A$ and $B$ and $4$ chemical reactions. The
    copy-numbers of these two species are denoted by $x=(x^{(1)},
    x^{(2)})^\top$. Here, $\kappa_i$, $v$, and $a^*_{\mathcal{R}}(x)$ are the rate constant, the state change vector, and the propensity function of the reactions, respectively.} 
    \centering
    \scalebox{0.9}{
    \begin{tabular}{clccl}
      No. & Reaction & $v^\top$ & Channel & $a_{\mathcal{R}}^*(x)$ \\
      \hline
      $1$ & \ce{$A$ ->[\kappa_1] $\emptyset$}  & $(-1, 0)$ & $1$  & $\kappa_1 x^{(1)}$ \\
      $2$ & \ce{$A$ + $B$ ->[\kappa_2] $2B$}  & $(-1,1)$ & $2$ & $\kappa_2 x^{(1)}x^{(2)}$ \\
      $3$ & \ce{$B$ ->[\kappa_3] $\emptyset$}  & $(0,-1)$ & $3$ & $\kappa_3 x^{(2)}$ \\
      $4$ & \ce{$A$ ->[\kappa_4] 2$A$}  & $(1,0)$ & $4$& $\kappa_4 x^{(1)}$ \\
      \hline
    \end{tabular}}
\end{table}

\begin{figure}[htpb]
  \includegraphics[width=0.42\textwidth]{./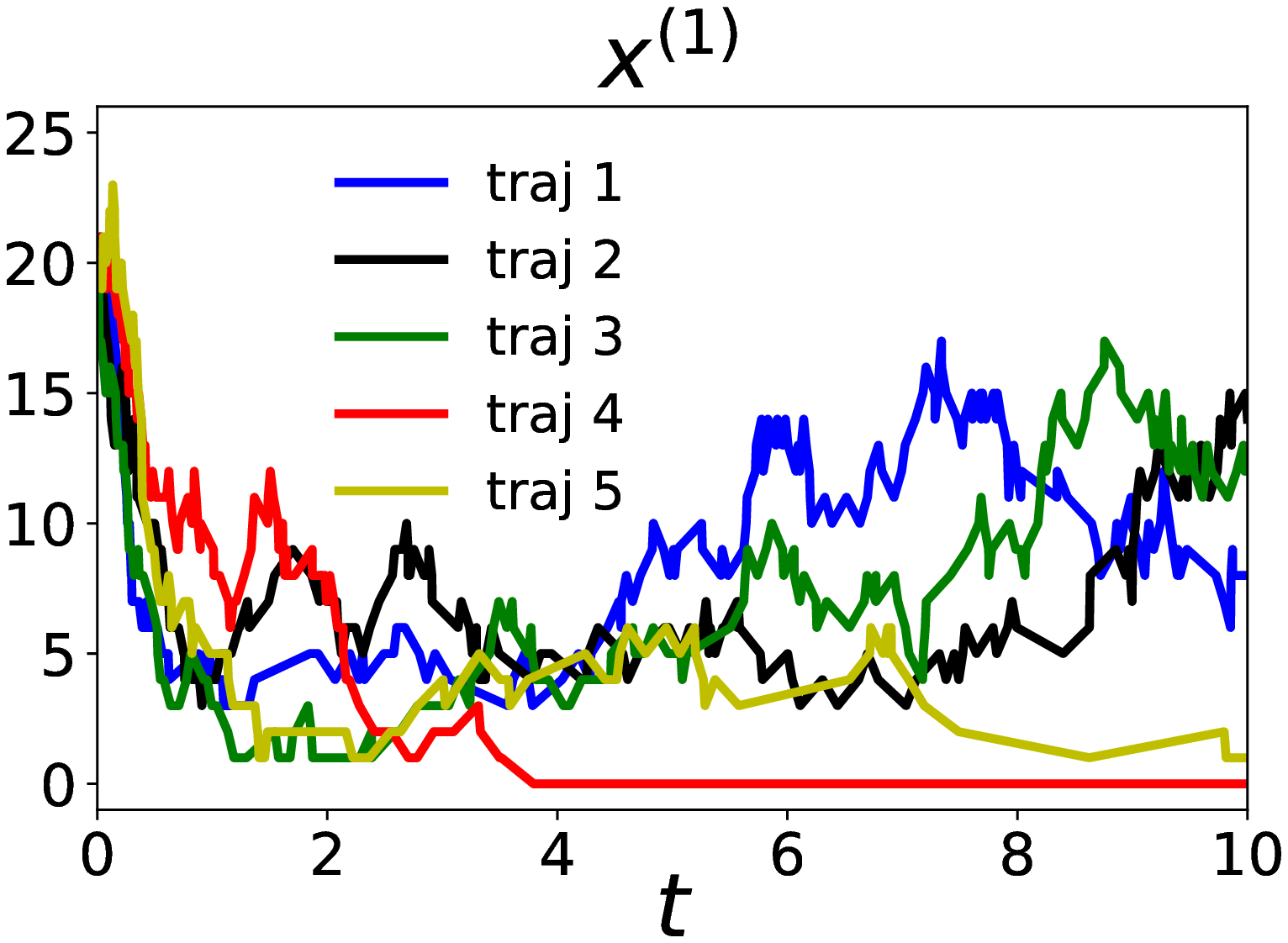}
  \includegraphics[width=0.42\textwidth]{./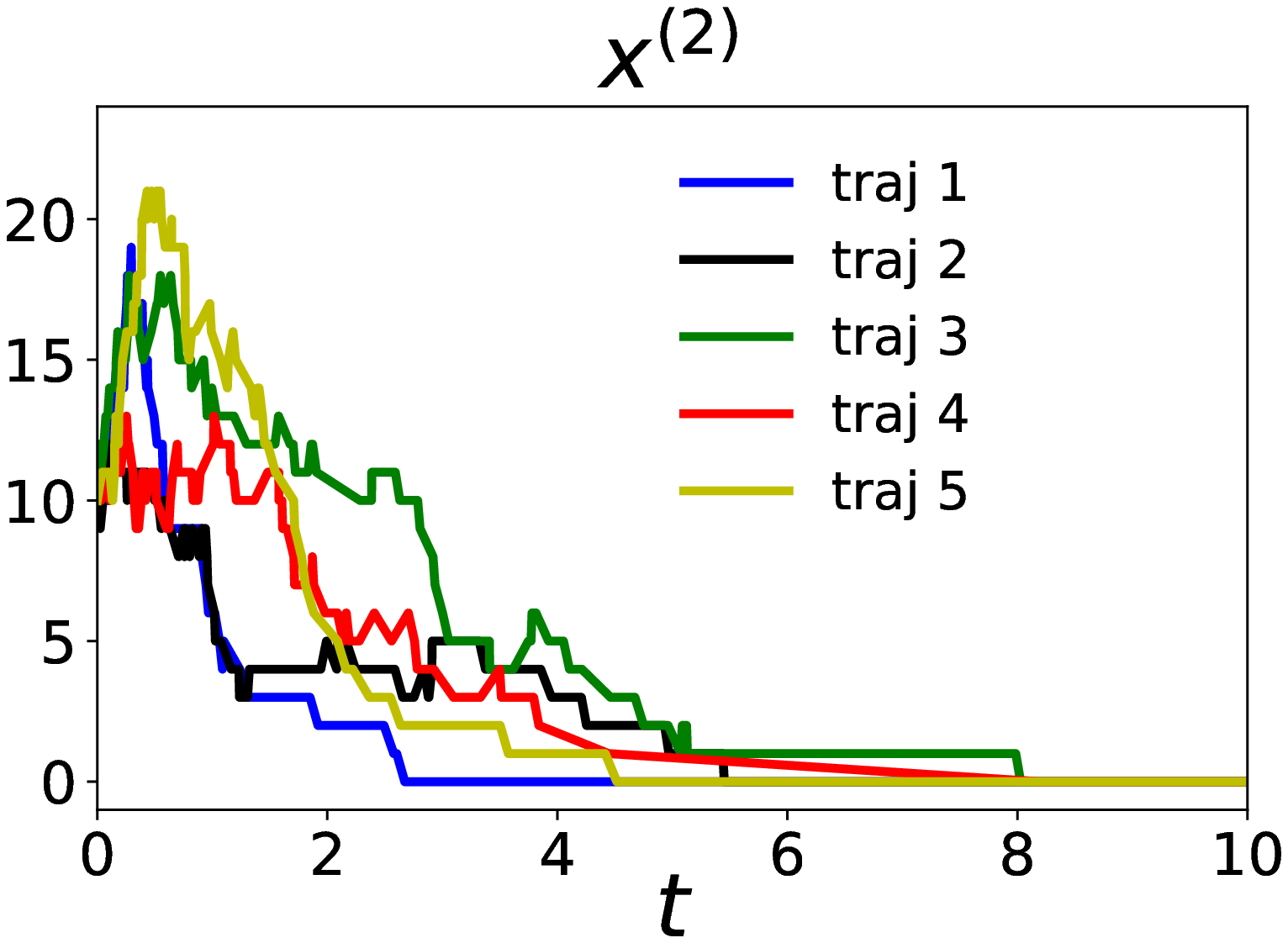}
  \centering
  \caption{
    Example 1. The evolution of the system's state $x=(x^{(1)}, x^{(2)})^\top$. Displayed are $5$ sample trajectories (of overall $100$ trajectories).
    \label{fig-traj-ex1}}
\end{figure}

With the prepared trajectory data, let us first consider the problem of learning the 
rate constants $\kappa_i$, $1 \le i \le 4$, assuming that the types of these
$4$ reactions are known. 
For this purpose, we consider the negative log-likelihood function
\begingroup
\small
\begin{align}
  \begin{split}
    -\frac{1}{QT}\ln\mathcal{L}^{(T)}(\bm{\omega}) 
    &=  -\frac{1}{QT}\sum_{q=1}^{Q}\bigg\{ \sum_{l=0}^{M^{(q)}-1}
    \ln\bigg[\sum_{j\in \mathcal{I}_{i_{l}^{(q)}}}
    \omega_{j}\,\varphi_{j}(y^{(q)}_{l})\bigg]
    + \sum_{l=0}^{M^{(q)}} t^{(q)}_{l}\bigg[ \sum_{j=1}^{N}
  \omega_{j}\,\varphi_{j}(y^{(q)}_{l})\bigg]\,\bigg\} \\
    &=  -\frac{1}{QT}\sum_{q=1}^{Q}\sum_{i=1}^K\bigg\{\sum_{k=1}^{M_i^{(q)}}
    \ln\bigg[\sum_{j\in \mathcal{I}_{i}}
    \omega_{j}\,\varphi_{j}(y^{(q)}_{l_k^{(q,i)}})\bigg]
    + \sum_{l=0}^{M^{(q)}} t^{(q)}_{l}\bigg[\sum_{j\in \mathcal{I}_i}
  \omega_{j}\,\varphi_{j}(y^{(q)}_{l})\bigg]\,\bigg\}\,,
\end{split}
    \label{opt-problem-ex1-task1}
    \end{align}
   \endgroup 
which is similar to \eqref{opt-problem-y-t-f}, except that in
\eqref{opt-problem-ex1-task1} we have taken all the $100$ trajectories into account. Specifically,  $q$ in
\eqref{opt-problem-ex1-task1} denotes the index of the trajectory, while 
the notation $M^{(q)}$, $M^{(q)}_i$, $i^{(q)}_{l}$, $y^{(q)}_{l}$, $t^{(q)}_{l}$,
$l^{(q,i)}_k$ has the same meaning (for the $q$th trajectory) as the corresponding notations $M$, $M_i$, $i_l$, $y_{l}$, $t_{l}$, and $l^{(i)}_k$ in
\eqref{opt-problem-y-t-f}, respectively.
Following the setting in Subsection~\ref{subsec-learn-rate}, in this example we have the parameter set
$\bm{\omega}=(\kappa_1, \kappa_2, \kappa_3, \kappa_4)^\top$, the index set
$\mathcal{I}_i=\{i\}$, $1 \le i \le 4$, as well as the functions given by
\begin{align*}
  \varphi_1(x) = x^{(1)}, \quad \varphi_2(x) = x^{(1)}x^{(2)}, \quad \varphi_3(x) = x^{(2)}, \quad \varphi_4(x) = x^{(1)}\,.
\end{align*}
Since each reaction channel only contains one single reaction, the minimizer of the objective
function \eqref{opt-problem-ex1-task1} can be computed explicitly using an
expression similar to \eqref{opt-omega-ni1}, and we get $\bm{\omega}^{(T)}=(0.98,~0.10,~0.97, ~0.91)^\top$,
  which is indeed close to the true parameters (see Table~\ref{ex1-task1-table}).

Let us now study the second learning task in
Subsection~\ref{subsec-unkown-structure} with the same trajectory data, where we assume that
the structure of the chemical reactions involved in the system is unknown as well. 
Notice that, by analyzing the trajectory data, in this case we can still figure out that there are in
total $2$ species and $4$ different reaction channels in the reaction network (see Table~\ref{ex1-info-data-table}).
In order to determine the propensity function of each reaction channel,
  based on Table~\ref{propensity-table} and the discussions in Remark~\ref{rmk-choice-of-basis}, we 
choose polynomials of degree at most $2$ for $x = (x^{(1)}, x^{(2)})^\top$, i.e.,  
\begin{align}
  \begin{split}
  &  \phi_1(x) = 1,~~ \phi_2(x) = x^{(1)},~~\phi_3(x) = x^{(2)}, \\
  &   \phi_4(x) = (x^{(1)})^2, ~~\phi_5(x) = x^{(1)}x^{(2)}, ~~
    \phi_6(x) = (x^{(2)})^2\,,
  \end{split}
  \label{ex1-phi-basis}
\end{align}
as basis functions.
The propensity functions of the reaction channels are approximated by 
\begingroup
\small
\setlength\abovedisplayskip{6pt}
\setlength\belowdisplayskip{6pt}
\begin{align}
  a_i^{(\epsilon)}(x\,;\bm{\omega}) =a_i^{(\epsilon)}(x\,;\bm{\omega}^{(i)}) = G_\epsilon\Big(\sum_{k=1}^6 \omega_{6(i-1)+k} \phi_k(x)\Big)\,,\quad 1 \le i\le 4\,,
  \label{ex1-ai}
\end{align}
\endgroup
where $G_\epsilon$ is defined in \eqref{g-eps} and we set $\epsilon=0.1$. In \eqref{ex1-ai}, the function $a_i^{(\epsilon)}$ depends on the
$6$ parameters $\bm{\omega}^{(i)} = \big(\omega_{6(i-1)+1}$, $\omega_{6(i-1)+2}$, $\dots$,
$\omega_{6(i-1)+6}\big)^\top$, and the same set of basis functions in
\eqref{ex1-phi-basis} is used for each of the $4$ channels. 

To determine the value of $\bm{\omega}=(\omega_1, \omega_2, \dots,
\omega_{24})^\top$, which consists of all the unknown parameters,
we follow the discussions in Remark~\ref{rmk-choice-of-basis} of
Subsection~\ref{subsec-unkown-structure} and solve the sparse minimization problems 
\begingroup
\small
\begin{equation}
  \begin{split}
    \min_{\bm{\omega}^{(i)} \in
    \mathbb{R}^{N_i}}\bigg\{-\frac{1}{QT}\sum_{q=1}^{Q}\bigg[
	\sum_{k=1}^{M^{(q)}_{i}}
    \ln G_\epsilon\bigg(\sum_{j\in \mathcal{I}_{i}}
    \omega_{j}\,\varphi_{j}(y^{(q)}_{l^{(q,i)}_{k}})\bigg)
     + \sum_{l=0}^{M^{(q)}} t^{(q)}_{l} G_\epsilon\bigg(\sum_{j \in \mathcal{I}_i}
  \omega_{j}\,\varphi_{j}(y^{(q)}_{l})\bigg)\,\bigg] + \lambda
  \|\bm{\omega}^{(i)}\|_1\bigg\}
\end{split}
    \label{opt-problem-ex1-task2}
    \end{equation}
    \endgroup
for each channel $\mathcal{C}_i$ separately, by applying Algorithm~\ref{fista-algo} in Appendix~\ref{app-0}. 
We choose the parameter $\lambda=0.2,~0.1,~0.01$ empirically, such that the sparsity and accuracy of the solution are balanced.
In each iteration step, evaluating the objective function in
\eqref{opt-problem-ex1-task2} as well as its derivative requires traversing every reaction along the $100$ trajectories. This part of the calculation is
performed in parallel using MPI in our code.
The iteration procedure continues until the relative difference between the
minimal and the maximal values of the objective function in the last $20$ iteration steps is smaller than $5 \cdot 10^{-8}$.
In this example, we run the code using $20$ processors in parallel and it 
takes only a few seconds to meet the convergence criterion.  

The final results are summarized in Table~\ref{ex1-task2-omega-table}. To make a comparison with the true parameters in \eqref{ex1-true-kappa}, we notice that, with the basis functions
in \eqref{ex1-phi-basis}, the true propensity functions of the $4$ reaction
channels in the system (see Table~\ref{ex1-cr-table} and Table~\ref{ex1-task1-table}) can be expressed as 
\begin{align}
  \begin{split}
    & a_1^{*}(x) = 1.0\, x^{(1)} = G_0(1.0\,\phi_2(x))\,,\quad 
     a_2^{*}(x) = 0.1\, x^{(1)}x^{(2)} = G_0(0.1\phi_5(x))\,,\\ 
    &a_3^{*}(x) = 1.0\, x^{(2)} = G_0(1.0\,\phi_3(x))\,, \quad
     a_4^{*}(x) = 0.9\,x^{(1)} = G_0(0.9\,\phi_2(x))\,, 
  \end{split}
  \label{ex1-true-ai-by-g0}
\end{align}
where $G_0(x)=\max(x,0)$. From the expressions above, we see that the propensity
functions in \eqref{ex1-ai}, with the estimated parameters in Table~\ref{ex1-task2-omega-table} (for $\lambda=0.1$ or $0.01$), indeed
provide reasonable approximations of the true propensity functions in \eqref{ex1-true-ai-by-g0}. 
  Comparing the results for different $\lambda$, we can observe that 
while the solution is sparser for $\lambda=0.2$ (e.g., coefficients corresponding to the basis $\phi_1\equiv 1$
in Table~\ref{ex1-task2-omega-table}),
the approximation of the true coefficients is better when $\lambda$ is smaller
(i.e., $\lambda=0.01$, underlined coefficients in Table~\ref{ex1-task2-omega-table}).

\begin{table}
    \caption{Example 1. The state change vectors $v$ of the $4$ reaction channels in the system and the numbers of occurrences of their activations within the $100$ trajectories are obtained by analyzing the trajectory data. \label{ex1-info-data-table}}
    \centering
    \scalebox{0.9}{
    \begin{tabular}{c|cccc}
      \hline
      Channel & $1$ & $2$ & $3$ & $4$ \\
      \hline
      Vector $v^\top$ & $(-1,0)$ & $(-1,1)$ & $(0, -1)$ & $(1,0)$ \\
      \hline
      No. of occurrences & $2296$ & $1778$ & $2777$ & $2135$ \\
      \hline
    \end{tabular}}
\end{table}

\begin{table}
    \caption{The first learning task in Example 1. The row with label ``True'' shows the parameters in \eqref{ex1-true-kappa} used to generate the $100$ trajectories of the reaction system. The row with label ``Estimated'' shows the parameters obtained by minimizing the negative log-likelihood function~\eqref{opt-problem-ex1-task1}.\label{ex1-task1-table}}
    \centering
    \scalebox{0.9}{
    \begin{tabular}{c|cccc}
      \hline
      & $\kappa_1$ & $\kappa_2$ & $\kappa_3$ & $\kappa_4$ \\
      \hline
      True & $1.0$ & $0.1$ & $1.0$ & $0.9$\\
      Estimated & $0.98$ & $0.10$ & $0.97$ & $0.91$\\
      \hline
    \end{tabular}}
\end{table}
  
\begin{table}
    \caption{The second learning task in Example 1. The parameters in the
    propensity functions \eqref{ex1-ai} of the $4$ channels are
    estimated by solving the sparse minimization problems~\eqref{opt-problem-ex1-task2}, with $\epsilon=0.1$ and
    $\lambda=0.2,\, 0.1,\, 0.01$, respectively. For each channel $\mathcal{C}_i$, $1 \le
    i \le 4$, the same set of basis functions in \eqref{ex1-phi-basis} is used in the estimation. 
    In each row, the estimated parameters $\bm{\omega}^{(i)} =
    \big(\omega_{6(i-1)+1},\, \omega_{6(i-1)+2},\, \dots, \omega_{6(i-1)+6})^\top$,
    which are involved in \eqref{ex1-ai} in front of the basis functions $1$, $x^{(1)}$, $x^{(2)}$, $(x^{(1)})^2$,
    $x^{(1)}x^{(2)}$, and $(x^{(2)})^2$, are shown. The parameter
    that has the largest absolute value within the same row is underlined.\label{ex1-task2-omega-table}}
    \centering
    \scalebox{0.9}{
    \begin{tabular}{c|c|cccccc}
      \hline
      Channel & $\lambda$ & $1$ & $x^{(1)}$ & $x^{(2)}$ & $(x^{(1)})^2$ & $x^{(1)}x^{(2)}$ & $(x^{(2)})^2$ \\
      \hline
      \multirow{3}{*}{$1$} 
      & $0.2$ & $-1.7\cdot 10^{-2}$ & \underline{$0.66$} & $0$ & $1.7 \cdot 10^{-2}$& $1.1 \cdot 10^{-2}$ & $1.7\cdot 10^{-4}$  \\
      & $0.1$ & $-1.2\cdot 10^{-1}$ & \underline{$0.84$} & $0$ & $6.6 \cdot 10^{-3}$& $6.7 \cdot 10^{-3}$ & $1.6 \cdot 10^{-4}$  \\
      & $0.01$ & $-0.24$ & \underline{$1.02$} & $2.6 \cdot 10^{-3}$ & $-2.4 \cdot 10^{-3}$& $1.4 \cdot 10^{-3}$ & $1.0 \cdot 10^{-4}$  \\
      \hline
      \multirow{3}{*}{$2$} 
      & $0.2$ & $-7.6\cdot 10^{-2}$ & $0$& $0$& $-3.8\cdot 10^{-4}$ & \underline{$0.10$} & $-2.4 \cdot 10^{-4}$  \\
      & $0.1$ & $-0.14$ & $0$& $0$& $-1.5\cdot 10^{-4}$ & \underline{$0.10$} & $0$  \\
      & $0.01$ & $-0.24$ & $1.8\cdot 10^{-2}$& $2.1\cdot 10^{-2}$& $-1.2\cdot 10^{-3}$ & \underline{$0.10$} & $-1.1 \cdot 10^{-3}$  \\
      \hline
      \multirow{3}{*}{$3$} 
      & $0.2$ & $0$ & $0$ & \underline{$0.73$} & $-2.0 \cdot 10^{-3}$ & $0$ & $2.0 \cdot 10^{-2}$  \\
      & $0.1$ & $-0.11$ & $-8.4\cdot 10^{-6}$ & \underline{$0.90$} & $-2.6 \cdot 10^{-3}$ & $0$ & $7.5 \cdot 10^{-3}$  \\
      & $0.01$ &$-0.25$ & $3.5\cdot 10^{-5}$ & \underline{$1.12$} & $-3.3 \cdot 10^{-3}$ & $-1.5\cdot 10^{-3}$ & $-6.7 \cdot 10^{-3}$  \\
      \hline
      \multirow{3}{*}{$4$} 
      & $0.2$ & $-1.7 \cdot 10^{-2}$ & \underline{$0.62$} & $0$ & $1.6\cdot 10^{-2}$ & $8.0\cdot 10^{-3}$ & $4.8 \cdot 10^{-4}$ \\
      & $0.1$ & $-0.11$ & \underline{$0.79$} & $9.9 \cdot 10^{-6}$ & $6.0\cdot 10^{-3}$ & $4.5\cdot 10^{-3}$ & $4.4\cdot 10^{-4}$ \\
      & $0.01$ & $-0.25$ & \underline{$0.96$} & $1.7 \cdot 10^{-6}$ & $-2.3\cdot 10^{-3}$ & $3.5\cdot 10^{-4}$ & $6.7 \cdot 10^{-4}$ \\
      \hline
    \end{tabular}}
\end{table}


\subsection{Example 2: predator-prey system}
\label{subsec-ex2}
  In the second example, we consider the predator-prey type reaction system in
  Table~\ref{ex2-cr-table}, which has two different species and $5$ chemical reactions~\cite{hkz2017}. 
  The system models the birth and death of two different species and is widely used as building block of more complicate chemical or biological systems.
  In contrast to the previous example where different reactions have different
  state change vectors, in the current case both the reaction \ce{$A$
  ->[\kappa_2] $\emptyset$} and the reaction \ce{$A$ + $B$ ->[\kappa_5] $B$}
  have the same state change vector $v=(-1,0)^\top$.

  In the first step, we generate the trajectory data of the system with the parameters 
\begingroup
\setlength\abovedisplayskip{6pt}
\setlength\belowdisplayskip{6pt}
  \begin{align}
  (\kappa_1,\,\kappa_2,\,\kappa_3,\,\kappa_4,\,\kappa_5) = (1.2,\, 0.3,\,
  0.8,\, 0.75,\,0.1)\,. 
    \label{ex2-true-kappa}
  \end{align}
  \endgroup
  Starting from the state $x=(25, 15)^\top$ at time $t=0$, $Q=100$ trajectories
  are simulated using SSA until the final time $T=10$, and $5$ of these $100$ trajectories are shown in Figure~\ref{fig-traj-ex2}. 
  After analyzing the trajectory data, we can identify the $4$ different
  reaction channels in the system as well as the numbers of occurrences of activations for each channel within the $100$ trajectories (see Table~\ref{ex2-info-data-table}).

\begin{table}
    \caption{\label{ex2-cr-table} Example 2. Chemical reaction system of
    predator-prey type. Two species $A$ (prey) and $B$ (predator) are involved
    in $5$ chemical reactions.
    The copy-numbers of $A, B$ are denoted by $x=(x^{(1)},
    x^{(2)})^\top$. 
    The $1$st and the $3$rd reactions model the replication (birth) of $A$ and $B$, respectively. 
  The $2$nd and the $4$th reactions model the depopulation (death) of $A$ and $B$, respectively.
  The $5$th reaction models the preying process of $B$ on $A$.
    Here, $\kappa_i$, $v$, and $a^*_{\mathcal{R}}(x)$ are the
    rate constant, the state change vector, and the propensity function of the reactions, respectively. The $2$nd and the $5$th reactions have the same
    state change vector $v=(-1,0)^\top$ and belong to the same reaction channel $\mathcal{C}_1$.}
    \centering
    \scalebox{0.9}{
    \begin{tabular}{clccl}
      No. & Reaction & $v^\top$ & Channel & $a_{\mathcal{R}}^*(x)$ \\
      \hline
      $1$ & \ce{$A$ ->[\kappa_1] $2A$}  & $(1,0)$ & $4$ & $\kappa_1 x^{(1)}$ \\
      $2$ & \ce{$A$ ->[\kappa_2] $\emptyset$}  & $(-1,0)$ & $1$ & $\kappa_2 x^{(1)}$ \\
      $3$ & \ce{$B$ ->[\kappa_3] $2B$}  & $(0,1)$ & $3$ & $\kappa_3 x^{(2)}$ \\
      $4$ & \ce{$B$ ->[\kappa_4] $\emptyset$} & $(0,-1)$ & $2$ & $\kappa_4 x^{(2)}$ \\
      $5$ & \ce{$A$ + $B$ ->[\kappa_5] $B$} & $(-1,0)$ & $1$ & $\kappa_5 x^{(1)}x^{(2)}$ \\
      \hline
    \end{tabular}}
\end{table}

\begin{figure}
  \includegraphics[width=0.42\textwidth]{./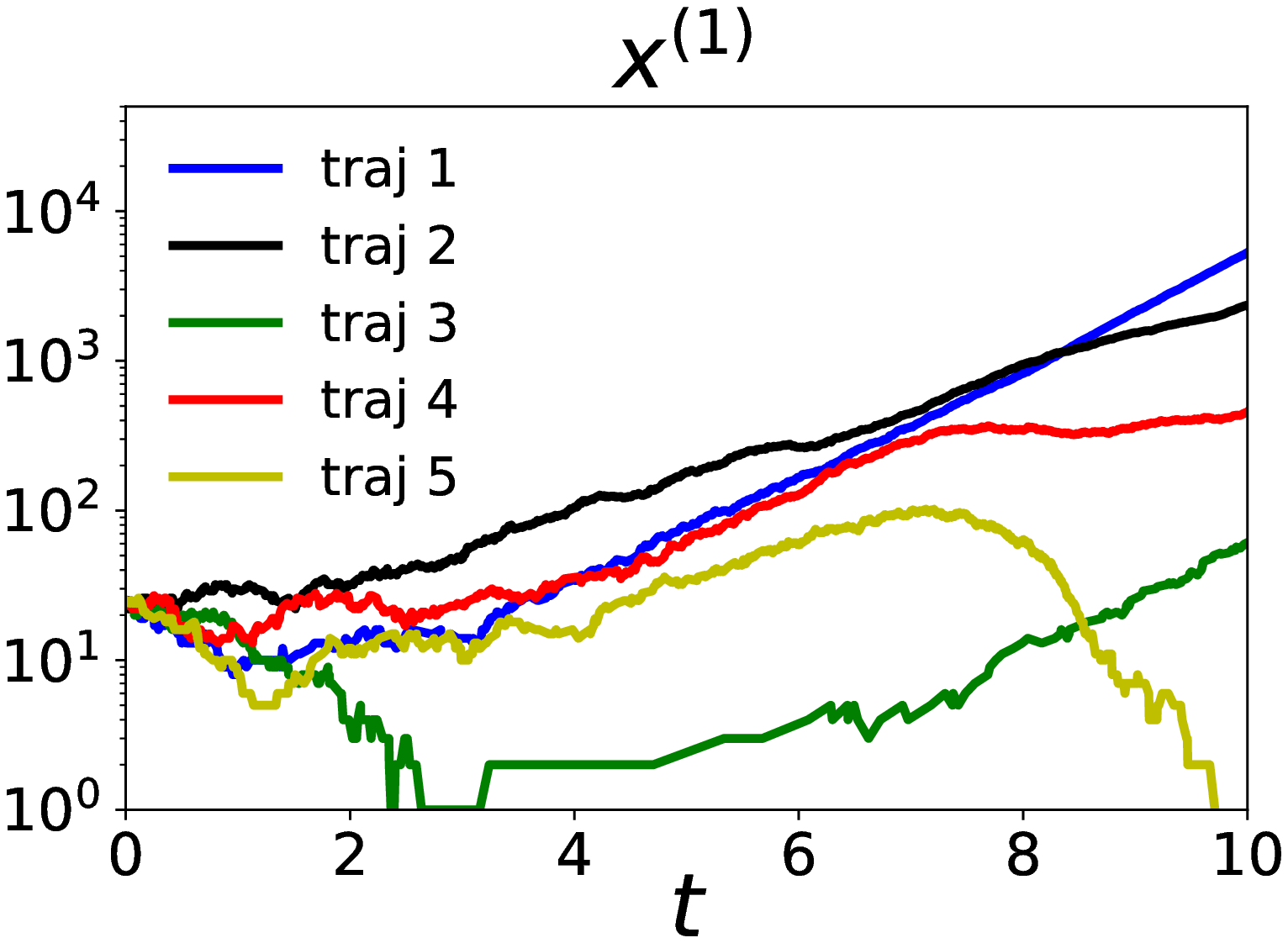}
  \includegraphics[width=0.42\textwidth]{./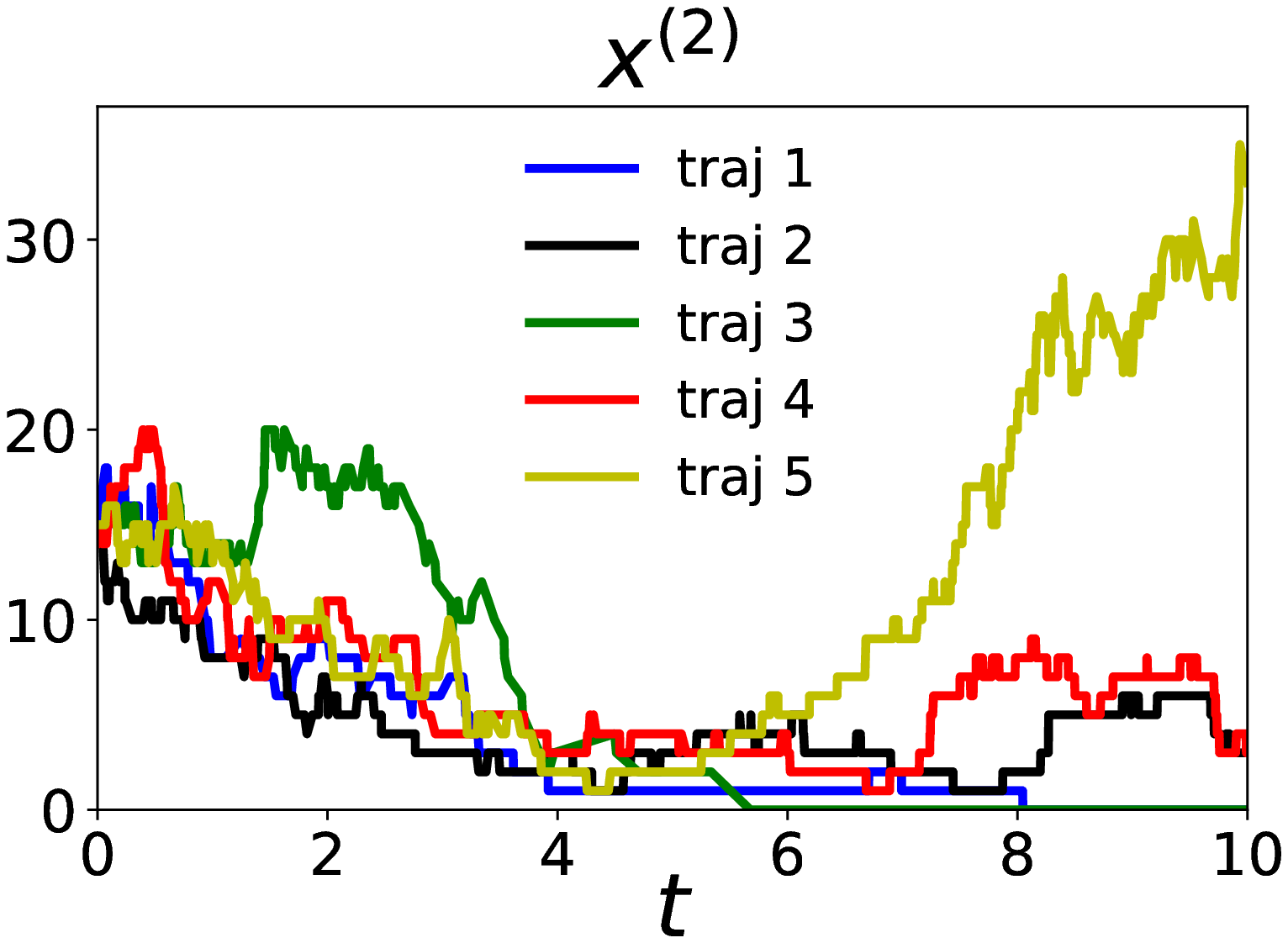}
  \centering
  \caption{Example 2. The evolution of the system's state $x=(x^{(1)}$,
  $x^{(2)})^\top$, shown are $5$ of the overall $100$ trajectories. Note that, unlike the trajectory data in Example 1 (Figure~\ref{fig-traj-ex1}), where the copy-numbers of both species $A$ and $B$ stay below $30$, in some trajectories of this example the copy-number of the species $A$ ($x^{(1)}$) may grow from $25$ to nearly $10^4$ within the time interval $[0,10]$.
  \label{fig-traj-ex2}}
\end{figure}

\begin{table}
    \caption{Example 2. Both the state change vectors $v$ of the $4$ reaction channels in the system and the numbers of occurrences of their activations within the $100$ trajectories can be obtained by analyzing the trajectory data. \label{ex2-info-data-table}}
    \centering
    \scalebox{0.9}{
    \begin{tabular}{c|cccc}
      \hline
      Channel & $1$ & $2$ & $3$ & $4$ \\
      \hline
      Vector $v^\top$ & $(-1,0)$ & $(0,-1)$ & $(0, 1)$ & $(1,0)$ \\
      \hline
      No. of occurrences & $22828$ & $14065$ & $14840$ & $42837$ \\
      \hline
    \end{tabular}}
\end{table}

  With the prepared trajectory data, we study the estimation of the parameters $\kappa_i$, $1
  \le i \le 5$, assuming that the structure of the $5$ reactions in Table~\ref{ex2-cr-table} is
  known (learning task 1). In the same way as in the previous
  example, we consider the minimization of the same negative log-likelihood
  function \eqref{opt-problem-ex1-task1}. The parameters $\kappa_1, \kappa_3,
  \kappa_4$ can be computed explicitly from the expression which is similar to
  \eqref{opt-omega-ni1} since the corresponding reaction channel contains only one single reaction, while 
  the parameters $\kappa_2, \kappa_5$, both of which are involved in the same
  channel $v^\top=(-1,0)$, can be found using a standard gradient descent method~\cite{snyman2018practical}.
  In the latter case, we choose the time step-size $\Delta t=10^{-3}$ and the initial
  values are set to $1.0$. In both cases, it only takes several seconds to run the code and the estimated parameters $\kappa_i$ are indeed
  very close to the true parameters (see Table~\ref{ex2-task1-table}).

\begin{table}
    \caption{The first learning task in Example 2. The row with label ``True'' shows the parameters in \eqref{ex2-true-kappa} which are used to generate the $100$ trajectories of the system. The row with label ``Estimated'' shows the parameters obtained by minimizing the negative log-likelihood function~\eqref{opt-problem-ex1-task1}.\label{ex2-task1-table}}
    \centering
    \scalebox{0.9}{
    \begin{tabular}{c|ccccc}
      \hline
      & $\kappa_1$ & $\kappa_2$ & $\kappa_3$ & $\kappa_4$& $\kappa_5$ \\
      \hline
      True & $1.2$ & $0.3$ & $0.8$ & $0.75$ & $0.1$ \\
      Estimated & $1.20$ & $0.30$ & $0.80$ & $0.76$ & $0.10$\\
      \hline
    \end{tabular}}
\end{table}

Next, we study the second learning task described in Subsection~\ref{subsec-unkown-structure},
  where our aim is to learn the propensity functions of the $4$ identified reaction channels without knowing
  the structure of the chemical reactions.  The propensity functions are approximated in the same way as in \eqref{ex1-ai}, 
  with the same set of basis functions in \eqref{ex1-phi-basis} and $\epsilon=0.1$. 
For each channel $\mathcal{C}_i$ and each $\lambda=0.2,\, 0.1,\,0.01$, the sparse minimization problem
\eqref{opt-problem-ex1-task2} is solved separately by ``FISTA with backtracking'' (Algorithm~\ref{fista-algo} in Appendix~\ref{app-0}), using the same number of processors (i.e., $20$) and the same convergence criterion as in the previous example.

   However, as shown in Figure~\ref{fig-traj-ex2}, the trajectory data in the
   current example exhibits further complexities, as the copy-number $x^{(1)}$
   of the species $A$ in the system varies significantly (from $25$ to
   nearly $10^4$) within the time interval $[0,10]$, unlike the trajectory data in the previous example, where the
   copy-numbers of the both species stay below $30$ (Figure~\ref{fig-traj-ex1}).
   As a result, in Table~\ref{ex2-task2-range-cj-table} we see that the different basis
   functions in \eqref{ex1-phi-basis} are of vastly different orders of magnitude when they are evaluated at the states contained in the $100$ trajectories.
   At the same time, in the numerical experiment we find that direct minimization of
   \eqref{opt-problem-ex1-task2} using FISTA does not converge at all for any
   of the $4$ reaction channels, due
   to the extremely small step-size between $10^{-11}$ and $10^{-8}$ (the step-size is determined by the algorithm itself; see Algorithm~\ref{fista-algo} in Appendix~\ref{app-0} and~\cite{fista2009}). 
   
To overcome this difficulty, we apply the preconditioning  idea discussed in Remark~\ref{rmk-preconditioning}.
Let $\varphi_j$ denote the basis functions, where $\varphi_j = \phi_k$, for $j=6(i-1) + k$, $1 \le k \le 6$. For each index $j$ belonging to the $i$th channel $\mathcal{C}_i$, we record the maximal values of $\varphi_j$ among all the states in the trajectory data at
which $\mathcal{C}_i$ has been activated. These maximal values are then
used to (empirically) determine the rescaling constants $c_j$, shown in
Table~\ref{ex2-task2-range-cj-table} such that  the functions
$\varphi_j/c_j$ after rescaling are roughly of the same order of magnitude. 
As discussed in Remark~\ref{rmk-preconditioning},  we solve the rescaled sparse minimization
problem, which is similar to \eqref{opt-problem-1-rescaled}, for each channel separately, and restore the
parameters $\bm{\omega}$ in the propensity functions. It turns out that the problems after rescaling become much easier to solve, because in this case 
the step-size is increased to $10^{-5}$ on average, which is $3$ to $6$ orders of
magnitude larger than the step-size in the unrescaled problem. It takes
less than $10$ minutes in total to meet the convergence criterion for all
$4$ reaction channels and the results are summarized in Table~\ref{ex2-task2-omega-table}.

To compare with the true parameters in \eqref{ex2-true-kappa}, notice that the
true propensity functions of the $4$ channels in Table~\ref{ex2-info-data-table} can be expressed as 
\begin{align}
  \begin{split}
    & a_1^{*}(x) = 0.3\,x^{(1)} + 0.1\,x^{(1)}x^{(2)} = G_0\big(0.3\,\phi_2(x)+0.1\,\phi_5(x)\big)\,,\\
    & a_2^{*}(x) = 0.75\, x^{(2)} = G_0\big(0.75\,\phi_3(x)\big)\,,\\ 
    &a_3^{*}(x) = 0.8\,x^{(2)} = G_0\big(0.8\,\phi_3(x)\big)\,, \quad
     a_4^{*}(x) = 1.2\,x^{(1)} = G_0\big(1.2\,\phi_2(x)\big)\,, 
  \end{split}
  \label{ex2-true-ai-by-g0}
\end{align}
where $G_0(x)=\max(x,0)$. From the expressions above, we can conclude that the propensity
functions in \eqref{ex1-ai}, together with the parameters given in Table~\ref{ex2-task2-omega-table}, indeed
approximate the true propensity functions in \eqref{ex2-true-ai-by-g0} quite
well. Comparing the results for $\lambda=0.01$,
we observe that the solutions are slightly sparser for $\lambda=0.2$ and
$\lambda=0.1$ (e.g., coefficients corresponding to the basis $\phi_1\equiv 1$
in Table~\ref{ex2-task2-omega-table}), 
while the approximation of the true coefficients is better when $\lambda$ is
smaller (i.e., $\lambda=0.01$, underlined coefficients in
Table~\ref{ex2-task2-omega-table}). Finally, we point out that the solution could be further improved if necessary, 
by using thresholding techniques (i.e., removing unimportant
basis functions)~\cite{Brunton-sindy} or cross-validation techniques (i.e.,
tuning $\lambda$)~\cite{sindy-sde}. 

\begin{table}[htbp]
    \caption{Example 2. As discussed in Remark~\ref{rmk-choice-of-basis},
    index $k$, $1\le k \le 6$, counts the different basis functions $\phi_k$,
    while index $j$, $1 \le j \le 24$, counts the basis functions $\varphi_j$
    for all $4$ channels. The same set of basis functions $\phi_k$ in
    \eqref{ex1-phi-basis} is used for each of the $4$ channels. For each $j$
    belonging to channel $\mathcal{C}_i$, i.e., $6(i-1)< j \le 6i$, we have
    the correspondence $\varphi_j = \phi_k$, if $j=6(i-1) + k$. See
    \eqref{varphi-j-phi-lambda}. For each channel $\mathcal{C}_i$, the column
    with label~``$\max \varphi_j$'' shows the maximal values of the $6$ basis functions $\phi_k$ (in different rows) evaluated on the trajectory data.
     The maximal values are computed among all the states in the trajectory data at which $\mathcal{C}_i$ has been activated. The rescaling constants $c_j$ are determined empirically depending on these maximal values such that the functions $\varphi_j/c_j$ are roughly of the same order of magnitude. 
    \label{ex2-task2-range-cj-table}} 
    \centering
    \scalebox{0.9}{
    \begin{tabular}{cc||cc|cc|cc|cc}
      \hline
      & & \multicolumn{2}{c|}{Channel $1$} & \multicolumn{2}{c|}{Channel $2$} & \multicolumn{2}{c|}{Channel $3$} & \multicolumn{2}{c}{Channel $4$} \\
      \hline
      $k$ & $\phi_k$ & $\max \varphi_j$ & $c_j$ & $\max \varphi_j$ & $c_j$ & $\max \varphi_j$ & $c_j$ & $\max \varphi_j$ & $c_j$ \\
      \hline
      $1$ & $1$ & $1$ & $1$  & $1$ & $1$ & $1$ & $1$ & $1$ & $1$  \\
      $2$ & $x^{(1)}$ & $5.3\cdot 10^{3}$ & $10$  & $2.2\cdot 10^{3}$ & $10$ & $2.1\cdot 10^{3}$ & $10$ & $5.3 \cdot 10^{3}$ & $50$  \\
      $3$ & $x^{(2)}$ & $41$ & $1$ & $104$ & $1$ & $103$ & $1$ & $38$ & $1$ \\
      $4$ & $(x^{(1)})^2$ & $2.8 \cdot 10^{7}$ & $50000$ & $4.8\cdot 10^{6}$ & $10000$ & $4.4 \cdot 10^{6}$ & $20000$ &  $2.8\cdot 10^{7}$ & $100000$ \\
      $5$ & $x^{(1)}x^{(2)}$ & $1.1\cdot 10^{4}$ & $100$ & $1.2\cdot 10^{4}$ & $100$ & $8.3\cdot 10^{3}$ & $20$ & $1.2\cdot 10^{4}$ & $100$ \\
      $6$ & $(x^{(2)})^2$ & $1.7\cdot 10^{3}$ & $5$ & $1.1\cdot 10^{4}$ & $100$ &
      $1.1\cdot 10^{4}$ & $50$ & $1.4 \cdot 10^{3}$ & $10$ \\
      \hline
    \end{tabular}}
\end{table}

\begin{table}[htbp]
    \caption{The second learning task in Example 2. The
    parameters in the propensity functions \eqref{ex1-ai} of the $4$ channels in Table~\ref{ex2-info-data-table} are estimated, with
    $\epsilon=0.1$ and $\lambda=0.2,\, 0.1,\, 0.01$, respectively.
As discussed in Remark~\ref{rmk-choice-of-basis}, for each channel $i$, $1 \le i \le 4$, the
same set of basis functions in \eqref{ex1-phi-basis} is used and the rescaled
  version of the sparse minimization problem~\eqref{opt-problem-ex1-task2} is
  solved, by
    rescaling the basis functions using the constants $c_j$ in Table~\ref{ex2-task2-range-cj-table}. 
    In each row, the estimated parameters $\bm{\omega}^{(i)} =
    \big(\omega_{6(i-1)+1},\, \omega_{6(i-1)+2},\, \dots, \omega_{6(i-1)+6})^\top$,
    which are involved in \eqref{ex1-ai} in front of the basis functions $1$, $x^{(1)}$, $x^{(2)}$, $(x^{(1)})^2$,
    $x^{(1)}x^{(2)}$, and $(x^{(2)})^2$, are shown for different $\lambda$. 
    The parameters that have relatively significant absolute values within the same row are underlined. \label{ex2-task2-omega-table}}
    \centering
    \scalebox{0.9}{
    \begin{tabular}{c|c|cccccc}
      \hline
      Channel & 
      $\lambda$ & $1$ & $x^{(1)}$ & $x^{(2)}$ & $(x^{(1)})^2$ & $x^{(1)}x^{(2)}$ & $(x^{(2)})^2$ \\
      \hline
      \multirow{3}{*}{$1$} 
      & $0.2$ & $0$ & \underline{$0.30$} & $-1.1 \cdot 10^{-2}$ & $8.4 \cdot 10^{-7}$& \underline{$0.10$} & $-2.8\cdot 10^{-4}$  \\
      & $0.1$ & $0$ & \underline{$0.30$} & $-2.2 \cdot 10^{-2}$ & $-6.2 \cdot 10^{-7}$& \underline{$0.10$} & $1.9\cdot 10^{-4}$  \\
      & $0.01$ & $-1.7\cdot 10^{-2}$ & \underline{$0.30$} & $-2.4 \cdot 10^{-2}$ & $-2.3 \cdot 10^{-6}$& \underline{$0.10$} & $2.4\cdot 10^{-4}$  \\
      \hline
      \multirow{3}{*}{$2$} 
      & $0.2$ & $0$ & $-1.1\cdot 10^{-3}$& \underline{$0.71$} & $2.8\cdot 10^{-7}$ & $4.1 \cdot 10^{-4}$ & $1.3 \cdot 10^{-3}$  \\
      & $0.1$ & $0$ & $-1.1\cdot 10^{-3}$& \underline{$0.73$}& $2.8\cdot 10^{-7}$ & $3.6 \cdot 10^{-4}$ & $8.4\cdot 10^{-4}$  \\
      & $0.01$ & $-1.1\cdot 10^{-3}$ & $-1.1\cdot 10^{-3}$& \underline{$0.75$}& $2.7\cdot 10^{-7}$ & $3.0\cdot 10^{-4}$ & $3.9 \cdot 10^{-4}$  \\
      \hline
      \multirow{3}{*}{$3$} 
      & $0.2$ & $0$ & $-2.3\cdot 10^{-4}$ & \underline{$0.76$} & $-3.3 \cdot 10^{-7}$ & $1.8\cdot 10^{-4}$ & $1.1 \cdot 10^{-3}$  \\
      & $0.1$ & $0$ & $-3.3\cdot 10^{-4}$ & \underline{$0.78$} & $-1.6 \cdot 10^{-7}$ & $9.6\cdot 10^{-5}$ & $5.8 \cdot 10^{-4}$  \\
      & $0.01$ & $-5.1\cdot 10^{-2}$ & $-1.5\cdot 10^{-4}$ & \underline{$0.80$} & $-1.6 \cdot 10^{-7}$ & $9.4\cdot 10^{-6}$ & $4.1 \cdot 10^{-5}$  \\
      \hline
      \multirow{3}{*}{$4$} 
      & $0.2$ & $0$ & \underline{$1.16$} & $-1.2 \cdot 10^{-2}$ & $1.9\cdot 10^{-5}$ & $4.8\cdot 10^{-3}$ & $-4.0 \cdot 10^{-5}$ \\
      & $0.1$ & $0$ & \underline{$1.17$} & $-1.7 \cdot 10^{-2}$ & $1.4\cdot 10^{-5}$ & $4.0\cdot 10^{-3}$ & $1.4 \cdot 10^{-4}$ \\
      & $0.01$ & $-0.13$ & \underline{$1.18$} & $-1.0 \cdot 10^{-2}$ & $1.0\cdot 10^{-5}$ & $3.5\cdot 10^{-3}$ & $7.6 \cdot 10^{-5}$ \\
      \hline
    \end{tabular}}
\end{table}


\subsection{Example 3: reaction network modeling intracellular viral infection}
\label{subsec-ex3}

In the third example, we consider the reaction network
in~\cite{SRIVASTAVA2002}, which models intracellular viral infection. We refer
the readers to~\cite{SRIVASTAVA2002} for the biological background 
and to~\cite{haseltine-rawlings-2002,ball06} for further details.
As shown in Table~\ref{ex3-cr-table}, the system consists of $4$ different species, i.e., the viral template (T), the viral genome (G), the viral
structure protein (S), and the virus (V). These species are involved in $6$ chemical reactions. 

First of all, starting from the state $x=(1,0,0,0)^\top$ at time $t=0$, $Q=10$ trajectories of
the system are generated using SSA until $T=100$, with the parameters 
\begingroup
\setlength\abovedisplayskip{6pt}
\setlength\belowdisplayskip{6pt}
\begin{align}
    (\kappa_1,\,\kappa_2,\,\kappa_3,\,\kappa_4,\,\kappa_5, \kappa_6) = (0.25,\, 0.001,\, 0.3,\, 100,\,2.0,\,0.1)\,
    \label{ex3-true-kappa}
\end{align}
\endgroup
in Table~\ref{ex3-cr-table}.
For illustration purposes, $5$ of these $10$ trajectories are shown in
Figure~\ref{fig-traj-ex3}. It can be observed that the copy-numbers $x^{(3)}$,
$x^{(4)}$ of $S, V$ may increase to $10^2$--$10^3$, while the
copy-numbers $x^{(1)}$, $x^{(2)}$ of $T$, $G$ remain relatively small (less than $20$)
within the time interval $[0,100]$.  After analyzing the trajectory data, we
can identify the $6$ reaction channels
  of the system. The numbers of occurrences of activations for each channel within the $10$ trajectories can be counted as well
  (see Table~\ref{ex3-info-data-table}). 

  With these trajectory data, we study the estimation of the parameters $\kappa_i$, $1
  \le i \le 6$, assuming that the structure of the $6$ reactions in
  Table~\ref{ex3-cr-table} is known (learning task 1). In the same way as we did in the previous two
  examples, the parameters are estimated by minimizing the same negative log-likelihood function \eqref{opt-problem-ex1-task1}. 
  Since each reaction channel contains only one reaction, the parameters $\kappa_i$
  can be directly computed (see \eqref{opt-omega-ni1}) and are indeed very close to the true parameters in
  \eqref{ex3-true-kappa}, as shown in Table~\ref{ex3-task1-table}.

In what follows, we continue to study the second learning task in Subsection~\ref{subsec-unkown-structure}, where we want to learn the
  propensity functions of the $6$ identified reaction channels in the system without knowing the structure of the chemical reactions. 
As discussed in Table~\ref{propensity-table} and Remark~\ref{rmk-choice-of-basis},
since there are $4$ different species in the system, we construct the following basis functions
  (i.e., polynomials of degree at most $2$)
\begingroup
\small
\begin{align}
  \begin{split}
    & \phi_1(x) = 1, \quad \phi_2(x) = x^{(1)}, \quad \phi_3(x) = x^{(2)}, \quad \phi_4(x) = x^{(3)}, \quad \phi_5(x) = x^{(4)}\,,\\
    &\phi_6(x) = (x^{(1)})^2, \quad \phi_7(x) = x^{(1)}x^{(2)}, \quad \phi_8(x) = x^{(1)}x^{(3)}, \quad \phi_9(x) = x^{(1)}x^{(4)}, \quad \\
    & \phi_{10}(x) = (x^{(2)})^2,  \quad \phi_{11}(x) = x^{(2)}x^{(3)}, \quad \phi_{12}(x) = x^{(2)}x^{(4)}, \quad \phi_{13}(x) = (x^{(3)})^2, \\
& \phi_{14}(x) = x^{(3)}x^{(4)}, \quad \phi_{15}(x) = (x^{(4)})^2\,,
  \end{split}
  \label{ex3-phi-basis}
\end{align}
\endgroup
where $x = (x^{(1)}, x^{(2)},x^{(3)},x^{(4)})^\top$, 
  to learn the propensity function of each reaction channel.
Similar to \eqref{ex1-ai} in the first example, the propensity functions of
the $6$ reaction channels are approximated by
\begingroup
\small
\setlength\abovedisplayskip{6pt}
\setlength\belowdisplayskip{6pt}
\begin{align}
  a_i^{(\epsilon)}(x\,;\bm{\omega}) =a_i^{(\epsilon)}(x\,;\bm{\omega}^{(i)}) =
  G_\epsilon\Big(\sum_{k=1}^{15} \omega_{15(i-1)+k} \phi_k(x)\Big)\,,\quad 1 \le i\le 6\,,
  \label{ex3-ai}
\end{align}
\endgroup
with $\epsilon=0.1$. 
For each $1 \le i \le 6$, the same sparse minimization problem in
\eqref{opt-problem-ex1-task2} is solved in order to determine the coefficients
$\bm{\omega}^{(i)}=\big(\omega_{15(i-1)+1}$, $\omega_{15(i-1)+2}$, $\dots$,
$\omega_{15(i-1)+15}\big)^\top$. From Table~\ref{ex3-task2-range-cj-table}, we can again observe that the maximal
values of the different basis functions in \eqref{ex3-phi-basis}, evaluated on the trajectory data, are of different orders of magnitude.    
 Therefore, the same rescaling strategy discussed in
 Remark~\ref{rmk-preconditioning} and in the previous example is applied to precondition the problem,
 using the rescaling constants $c_j$ in Table~\ref{ex3-task2-range-cj-table} which are determined empirically based on the maximal values of basis functions.
Notice that, since for different channels the basis functions attain similar
maximal values, the same set of rescaling constants is used for all the $6$ channels.
For each reaction channel, the rescaled minimization problem is solved in
parallel using $10$ processors, since the trajectory data only contains $10$ trajectories,
and the iteration procedure continues until the relative difference between 
the minimal and the maximal values of the objective function in the last $20$ iteration steps is smaller than $1.0 \cdot 10^{-7}$.
The estimated coefficients are summarized in Table~\ref{ex3-task2-omega-table}.
For each channel except channel $\mathcal{C}_4$, it takes around $10$
minutes to meet the convergence criterion, while for channel $\mathcal{C}_4$
it takes roughly two hours, because the corresponding solution of channel $\mathcal{C}_4$ has a large coefficient
(i.e., the underlined coefficient $92.1$ in Table~\ref{ex3-task2-omega-table}) which is very different from the zero initial guess.

To compare with the true propensity functions of the $6$ channels in Table~\ref{ex3-info-data-table}
with the true parameters in \eqref{ex3-true-kappa}, let us write the true propensity functions as
\begingroup
\setlength\abovedisplayskip{6pt}
\setlength\belowdisplayskip{6pt}
\begin{align}
  \begin{split}
    & a_1^{*}(x) = 0.25\,x^{(1)} = G_0\big(0.25\,\phi_2(x)\big)\,, \quad
     a_2^{*}(x) = 0.001\, x^{(2)}x^{(3)} = G_0\big(0.001\,\phi_{11}(x)\big)\,,\\ 
    &a_3^{*}(x) = 0.3\,x^{(3)} = G_0\big(0.3\,\phi_4(x)\big)\,, \quad
     a_4^{*}(x) = 100.0\,x^{(1)} = G_0\big(100.0\,\phi_2(x)\big)\,, \\
    & a_5^{*}(x) = 2.0\,x^{(1)} = G_0\big(2.0\,\phi_2(x)\big)\,, \quad
     a_6^{*}(x) = 0.1\,x^{(2)} = G_0\big(0.1\,\phi_3(x)\big)\,, 
  \end{split}
  \label{ex3-true-ai-by-g0}
\end{align}\endgroup 
where $G_0(x)=\max(x,0)$. From the expressions above, we can conclude that the propensity
functions in \eqref{ex3-ai}, together with the estimated parameters in
Table~\ref{ex3-task2-omega-table}, indeed provide good approximation of the true propensity
functions in \eqref{ex3-true-ai-by-g0}. 
  Note that in this numerical experiment we have empirically chosen different values of $\lambda$ for different channels since we only want to demonstrate that the true parameters can indeed be estimated with properly chosen $\lambda$. A more systematic way of choosing $\lambda$ is cross-validation~\cite{sindy-sde,cross-validation-geisser,hastie2009elements}.  See Remark~\ref{rmk-possible-extension}.
Finally, we point out that the solution could be further refined if necessary, by applying thresholding techniques (i.e., removing unimportant basis functions and then solving the minimization problem again)~\cite{Brunton-sindy}. 

\begin{table}
    \caption{\label{ex3-cr-table} Example 3. The reaction network models a
    type of intracellular viral infection~\cite{SRIVASTAVA2002}. There are $4$ different species in the system, i.e., the viral template (T), the viral genome (G), the viral structure protein (S), and the virus (V), which are involved in $6$ chemical reactions. The copy-numbers of $T$, $G$, $S$, and $V$ are denoted by the state vector $x=(x^{(1)}, x^{(2)}, x^{(3)}, x^{(4)})^\top$.}
    \centering
    \scalebox{0.9}{
    \begin{tabular}{clccl}
      No. & Reaction & $v^\top$ & Channel & $a_{\mathcal{R}}^*(x)$ \\
      \hline
      $1$ & \ce{$T$ ->[\kappa_1] $\emptyset$} & $(-1,0,0,0)$ & $1$ & $\kappa_1 x^{(1)}$ \\
      $2$ & \ce{$G$ + $S$ ->[\kappa_2] $V$} & $(0,-1,-1,1)$ & $2$ & $\kappa_2 x^{(2)}x^{(3)}$ \\
      $3$ & \ce{$S$ ->[\kappa_3] $\emptyset$} & $(0,0,-1,0)$ & $3$ & $\kappa_3 x^{(3)}$ \\
      $4$ & \ce{$T$ ->[\kappa_4] $T$ + $S$}  & $(0,0,1,0)$ & $4$ & $\kappa_4 x^{(1)}$ \\
      $5$ & \ce{$T$ ->[\kappa_5] $T$ + $G$}  & $(0,1,0,0)$ & $5$ & $\kappa_5 x^{(1)}$ \\
      $6$ & \ce{$G$ ->[\kappa_6] $T$}  & $(1,-1,0,0)$ & $6$ & $\kappa_6 x^{(2)}$ \\
      \hline
    \end{tabular}}
\end{table}

\begin{figure}
  \begin{tabular}{cc}
    \includegraphics[width=0.42\textwidth]{./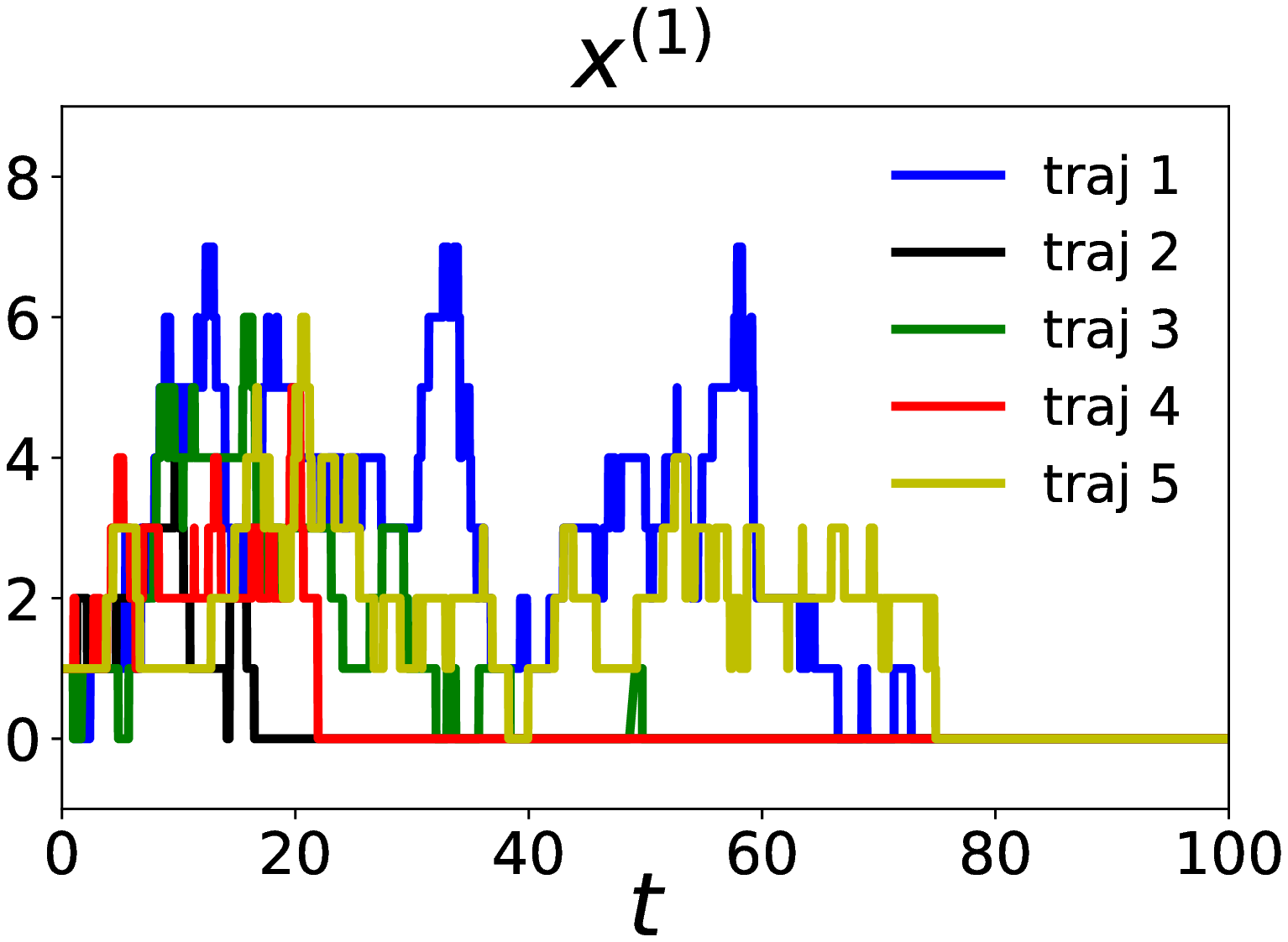}&
  \includegraphics[width=0.42\textwidth]{./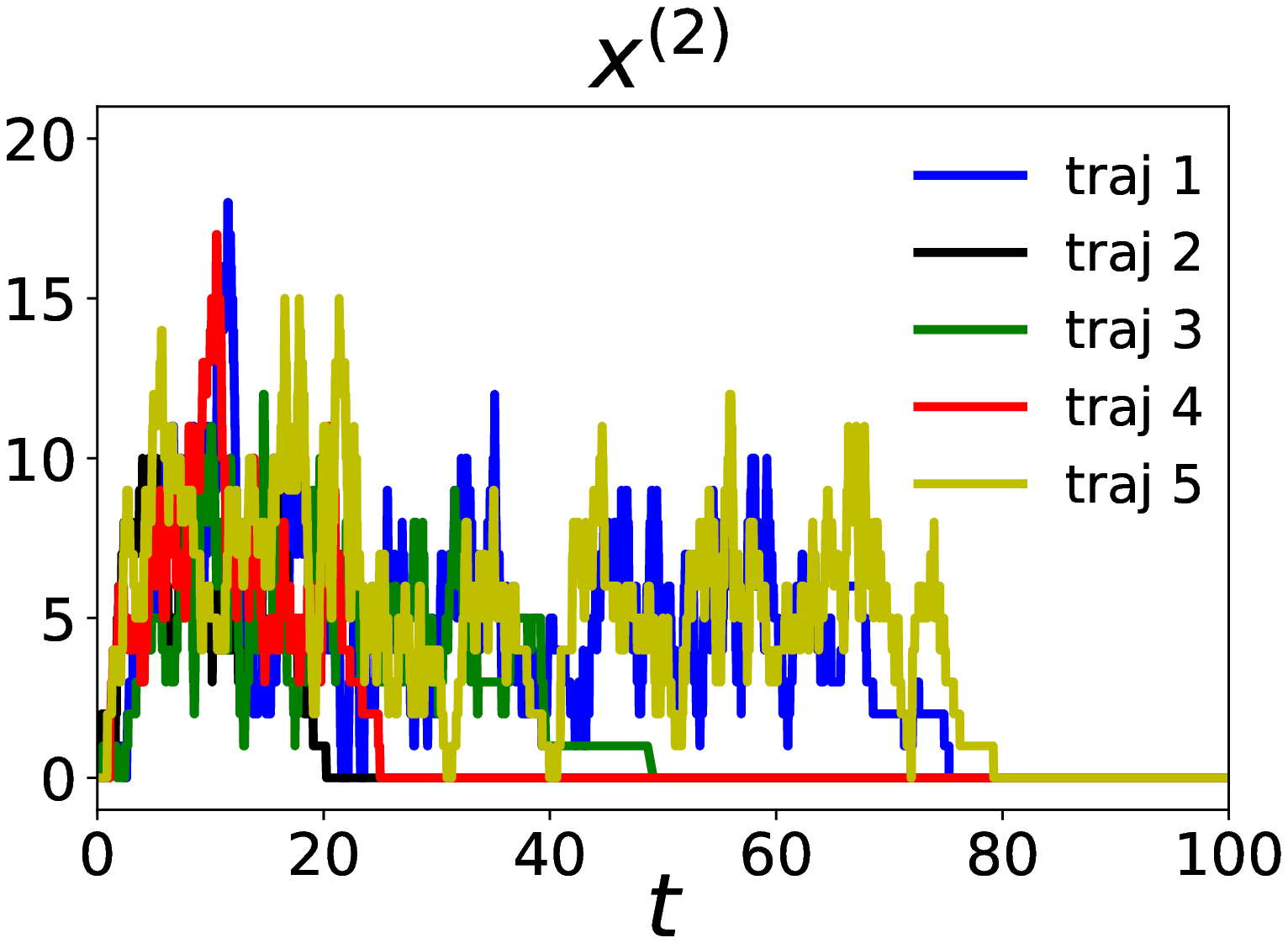}\\
    \includegraphics[width=0.42\textwidth]{./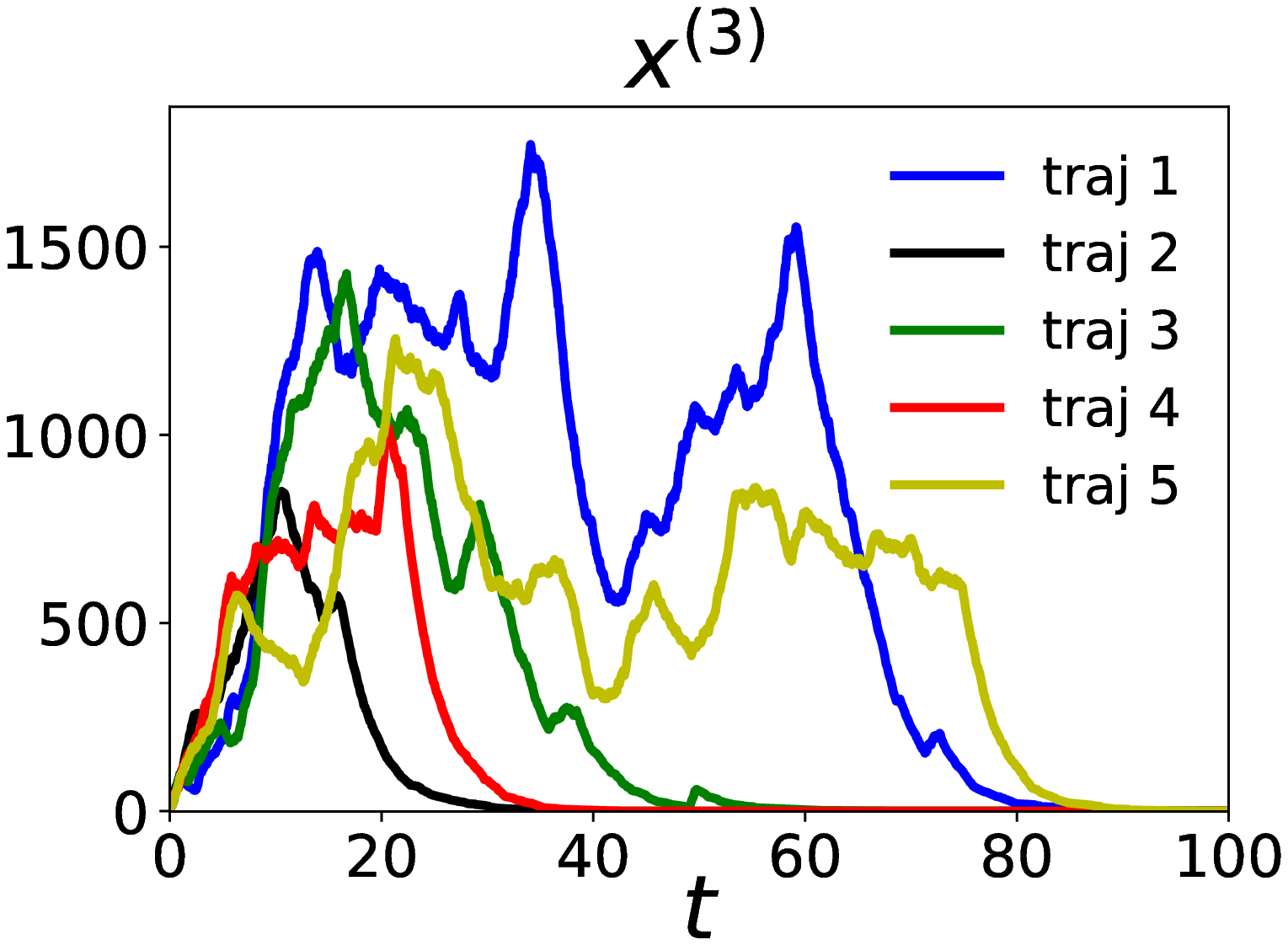}&
  \includegraphics[width=0.42\textwidth]{./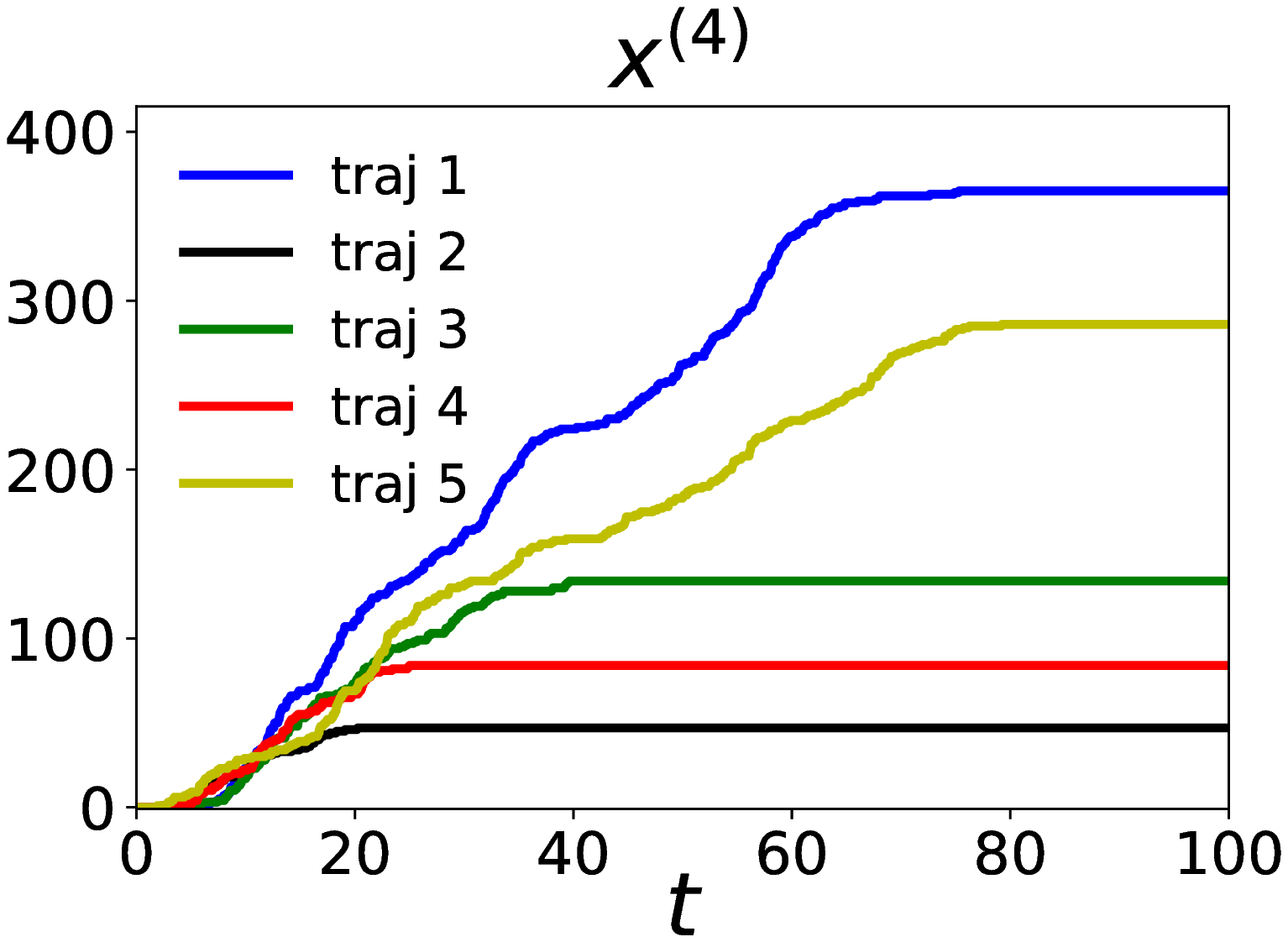}
  \end{tabular}
  \centering
  \caption{
    Example 3. The evolution of the system's state $x=(x^{(1)}, x^{(2)}, x^{(3)}, x^{(4)})^\top$. Shown are 5 of the overall 10 trajectories. The copy-numbers $x^{(3)}$, $x^{(4)}$ of $S, V$ can increase to $10^2$--$10^3$, while the copy-numbers $x^{(1)}$, $x^{(2)}$ of $T$, $G$ remain relatively small (less than $20$) within the time interval $[0,100]$. \label{fig-traj-ex3}}
\end{figure}

\begin{table}
    \caption{Example $3$. The state change vectors $v$ of the $6$ reaction channels in the system and the numbers of occurrences of their activations within the $10$ trajectories can be obtained by analyzing the trajectory data.  \label{ex3-info-data-table}}
    \centering \setlength\tabcolsep{0.6ex}
    \scalebox{0.9}{
    \begin{tabular}{c|cccccc}
      \hline
      Channel & $1$ & $2$ & $3$ & $4$ & $5$ & $6$ \\
      \hline
      Vector $v^\top$ & $(-1,0,0,0)$ & $(0, -1,-1,1)$ & $(0,0,-1,0)$ & $(0,0,1,0)$ & $(0,1,0,0)$ & $(1,-1,0,0)$\\
      \hline
      No. of occurrences & $214$ & $1534$ & $87942$ & $90130$ & $1743$ & $206$\\
      \hline
    \end{tabular}}
\end{table}

\begin{table}
    \caption{The first learning task in Example $3$. The row with label ``True'' shows the parameters in \eqref{ex3-true-kappa} which are used to generate the $10$ trajectories of the system. The row with label ``Estimated'' shows the parameters obtained by minimizing the negative log-likelihood function~\eqref{opt-problem-ex1-task1}. \label{ex3-task1-table}}
    \centering
    \scalebox{0.9}{
    \begin{tabular}{c|cccccc}
      \hline
      & $\kappa_1$ & $\kappa_2$ & $\kappa_3$ & $\kappa_4$& $\kappa_5$ & $\kappa_6$\\
      \hline
      True & $0.25$ & $0.001$ & $0.3$ & $100.0$ & $2.0$ & $0.1$\\
      Estimated & $0.24$ & $0.001$ & $0.30$ & $99.3$ & $1.92$ & $0.10$\\
      \hline
    \end{tabular}}
\end{table}

\begin{table}
    \caption{Example 3. For the reaction channels $\mathcal{C}_1,
    \mathcal{C}_2, \dots, \mathcal{C}_6$ in the system, the maximal values of
    the $15$ basis functions $\phi_k$ in \eqref{ex3-phi-basis} are shown in
    the columns with label ``Ch.$1$'', ``Ch.$2$'', $\dots$, and ``Ch.$6$'',
    respectively. The same set of basis functions $\phi_k$, $1\le k \le 15$,
    is used for each of the $6$ channels. As discussed in
    Remark~\ref{rmk-choice-of-basis}, index $k$ counts different basis
    functions $\phi_k$, while index $j$, $1 \le j \le 6 \cdot 15$, counts
    basis functions $\varphi_j$ for all the $6$ channels. For each $j$
    belonging to channel $\mathcal{C}_i$, i.e., $15(i-1)< j \le 15i$, we have
    the correspondence $\varphi_j = \phi_k$, if $j=15(i-1) + k$. See
    \eqref{varphi-j-phi-lambda}. For each channel $\mathcal{C}_i$, the column
    with label~``Ch.$i$'' shows the maximal values of the $15$ basis functions
    $\phi_k$ (in different rows) evaluated for the trajectory data. The
    maximal values are computed among all the states in the $10$ trajectories
    at which $\mathcal{C}_i$ has been activated. The rescaling constants $c_j$
    are determined empirically, such that after rescaling the basis functions
    are roughly of the same order of magnitude. Since the basis functions have
    similar maximal values in different channels, the same rescaling constants
    are used for all $6$ channels.
    \label{ex3-task2-range-cj-table}}
    \centering
    \scalebox{0.9}{
    \begin{tabular}{cc|c|c|c|c|c|c|c}
      \hline
      & & \multicolumn{6}{c|}{$\max \varphi_j$} & \\
      \cline{3-8}
      $k$ & $\phi_k$ & Ch.$1$ & Ch.$2$ & Ch.$3$ & Ch.$4$ & Ch.$5$ & Ch.$6$ & $c_j$\\
      \hline
      $1$ & $1$ & $1$ & $1$ & $1$ & $1$ & $1$ & $1$ & $1$ \\
      $2$ & $x^{(1)}$ & $9$ & $9$ & $9$ & $9$ & $9$ & $8$ & $1$ \\
      $3$ & $x^{(2)}$ & $17$ & $18$ & $18$ & $18$ & $17$ & $17$ & $1$ \\
      $4$ & $x^{(3)}$ & $1857$ & $1865$ & $1868$ & $1868$ & $1855$ & $1737$ & $10$ \\
      $5$ & $x^{(4)}$ & $363$ & $364$ & $365$ & $365$ & $362$ & $362$ & $3$ \\
      $6$ & $(x^{(1)})^2$ & $81 $ & $81$ & $81$ & $81$ & $81$ & $64$& $1$ \\
      $7$ & $x^{(1)}x^{(2)}$ & $102$ & $119$ & $119$ & $119$ & $112$& $98$ & $1$ \\
      $8$ &     $x^{(1)}x^{(3)}$ & $15786$ & $15795$ & $15804$ & $15804$ & $15723$& $12992$ & $100$ \\
      $9$ & $x^{(1)}x^{(4)}$ & $2254$ & $2247$ & $2254$ & $2254$ & $2254$& $1890$ & $20$ \\
      $10$ & $(x^{(2)})^2$ & $289$ & $324$ & $324$ & $324$ & $289$& $289$ & $2$\\
      $11$&     $x^{(2)}x^{(3)}$ & $20434$ & $26608$ & $26640$ & $26640$ & $24825$ & $20434$ & $200$ \\
      $12$ &     $x^{(2)}x^{(4)}$ & $2997$ & $3320$ & $3320$ & $3320$ & $2988$ & $3150$ & $30$\\
      $13$& $(x^{(3)})^2$ & $3.4 \cdot 10^{6}$ & $3.5\cdot 10^{6}$ & $3.5 \cdot 10^{6}$ & $3.5\cdot 10^{6}$ & $3.4\cdot 10^{6}$& $3.0 \cdot 10^{6}$ & $30000$\\
      $14$&     $x^{(3)}x^{(4)}$ & $514485$ & $515264$ & $516483$ & $516483$ & $514272$& $509288$ & $5000$ \\
      $15$&     $(x^{(4)})^2$ & $131769$ & $132496$ & $133225$ & $133225$ & $131044$& $131044$ & $1000$\\
      \hline
    \end{tabular}}
\end{table}
  
\begin{table}
    \caption{The second learning task in Example 3. The parameters in the
    propensity functions \eqref{ex3-ai} of the $6$ channels in Table~\ref{ex3-info-data-table} are estimated with $\epsilon=0.1$. In this example, different $\lambda$ have been chosen for different reaction channels. For each channel $\mathcal{C}_i$, $1 \le i \le 6$, the rescaled version of the sparse minimization problem~\eqref{opt-problem-ex1-task2} is solved by rescaling the basis functions using the constants $c_j$ in
    Table~\ref{ex3-task2-range-cj-table}. The same set of basis functions in \eqref{ex3-phi-basis} and the same set of rescaling constants are used in estimating the parameters for all the channels. In each column, the estimated parameters $\bm{\omega}^{(i)} = \big(\omega_{15(i-1)+1},\, \omega_{15(i-1)+2},\, \dots, \omega_{15(i-1)+15})^\top$,
    which are involved in \eqref{ex3-ai} in front of the basis functions $\phi_k$ are shown. The parameters that have relatively significant absolute values within the same column are underlined.
    \label{ex3-task2-omega-table}}
    \centering
    \scalebox{0.85}{
    \begin{tabular}{cc|c|c|c|c|c|c}
      \hline
      & & Ch.$1$ & Ch.$2$ & Ch.$3$ & Ch.$4$ & Ch.$5$ & Ch.$6$ \\
      \cline{3-8}
    $k$  & $\phi_k$ & $\lambda = 0.01$ & $\lambda =10$ & $\lambda =0.1$ & $\lambda =0.005$ & $\lambda =0.005$ & $\lambda =0.01$ \\
      \hline
      $1$& $1$ & $-2.6\cdot 10^{-2}$  & $0$ & $0$ & $-8.4\cdot 10^{-1}$ & $-9.0\cdot 10^{-2}$ & $-1.1 \cdot 10^{-2}$ \\
      $2$&  $x^{(1)}$ & \underline{$0.28$}  & $0$ & $0$ & \underline{$92.1$} & \underline{$1.81$} & $0$ \\
     $3$& $x^{(2)}$ & $1.8 \cdot 10^{-2}$ & $0$ & $0$ & $-3.3 \cdot 10^{-3}$ 
      & $1.4 \cdot 10^{-2}$ & \underline{$0.11$} \\
     $4$& $x^{(3)}$ & $-4.3\cdot 10^{-4}$ & $0$ & \underline{$0.30$} & $8.8 \cdot 10^{-3}$ 
      & $1.0 \cdot 10^{-3}$ & $2.6\cdot 10^{-5}$\\
    $5$&  $x^{(4)}$ & $-5.0 \cdot 10^{-4}$ & $0$ & $-4.1\cdot 10^{-4}$ & $-5.2 \cdot 10^{-3}$ 
      & $-2.2 \cdot 10^{-3}$& $-3.2 \cdot 10^{-4}$\\
    $6$&  $(x^{(1)})^2$ & $0$ & $0$ & $0$ & $-2.2 \cdot 10^{-4}$ 
      & $-3.7 \cdot 10^{-2}$ & $-2.0 \cdot 10^{-2}$\\
    $7$&  $x^{(1)}x^{(2)}$ & $1.0\cdot 10^{-2}$ & $0$ & $0$ & $3.3\cdot 10^{-1}$ 
      & $1.2 \cdot 10^{-3}$ & $9.4 \cdot 10^{-3}$\\
    $8$&  $x^{(1)}x^{(3)}$ & $-1.3\cdot 10^{-4}$ & $4.5\cdot 10^{-5}$ & $-1.6\cdot 10^{-3}$ & $5.2 \cdot 10^{-3}$ 
      & $3.6 \cdot 10^{-4}$& $4.1 \cdot 10^{-6}$\\
    $9$&  $x^{(1)}x^{(4)}$ & $2.3\cdot 10^{-4}$ & $0$ & $-6.7\cdot 10^{-4}$ & $1.2 \cdot 10^{-2}$
      & $7.4 \cdot 10^{-4}$& $-2.3 \cdot 10^{-4}$\\
     $10$& $(x^{(2)})^2$ & $-3.7\cdot 10^{-3}$ & $0$ & $0$ & $-8.8 \cdot 10^{-3}$
      & $-3.7 \cdot 10^{-3}$& $-7.4 \cdot 10^{-3}$\\
     $11$& $x^{(2)}x^{(3)}$ & $-2.1\cdot 10^{-5}$ & \underline{$9.5\cdot 10^{-4}$} & $6.1\cdot 10^{-4}$ &  $8.3 \cdot 10^{-5}$ 
      &  $7.4 \cdot 10^{-5}$&  $8.5 \cdot 10^{-5}$\\
     $12$& $x^{(2)}x^{(4)}$ & $2.2\cdot 10^{-4}$ & $9.5\cdot 10^{-5}$ & $6.5 \cdot 10^{-4}$ &  $1.4 \cdot 10^{-3}$ 
      & $-5.4 \cdot 10^{-4}$&  $-2.4 \cdot 10^{-5}$\\
    $13$&  $(x^{(3)})^2$ & $6.9\cdot 10^{-7}$ & $-1.7\cdot 10^{-7}$ & $1.2\cdot 10^{-7}$ &  $-1.0 \cdot 10^{-5}$ 
      & $-1.8 \cdot 10^{-6}$& $-1.9 \cdot 10^{-7}$\\
     $14$& $x^{(3)}x^{(4)}$ & $-1.5\cdot 10^{-6}$ & $6.4\cdot 10^{-7}$ & $-1.3\cdot 10^{-6}$ &  $-3.0 \cdot 10^{-5}$ 
      & $4.6 \cdot 10^{-6}$& $7.5 \cdot 10^{-7}$\\
     $15$& $(x^{(4)})^2$ & $1.5\cdot 10^{-6}$ & $-4.3\cdot 10^{-7}$ & $4.0\cdot 10^{-7}$ &  $8.9 \cdot 10^{-6}$ 
      & $4.1 \cdot 10^{-7}$& $5.5 \cdot 10^{-7}$\\
      \hline
    \end{tabular}}
\end{table}

\section{Asymptotic analysis of the two learning tasks}
\label{sec-asymptotics}
In this section, we consider the two learning tasks introduced in
Section~\ref{sec-inverse} when $T\rightarrow +\infty$. 
Although we are mainly interested in the second learning task and the
corresponding minimization problem \eqref{opt-problem-1}, 
in Subsection~\ref{subsec-infinite-many-data} we start with the first learning task, because it is
highly relevant to the second learning task. The analysis in Subsection~\ref{subsec-infinite-many-data}
will be useful when we study the second learning task in Subsection~\ref{subsec-asymptotic-sparse-learning}.
The proofs of the various results will be given in Appendix~\ref{app-3}.

Assume the true propensity functions of the underlying chemical reaction system
are $a^*_i, a^*$ in \eqref{a-sum-ai-true}, and recall that the system's state $X(t)$ satisfies the dynamical equation \eqref{eqn-of-xt}, where
$\mathcal{P}_i$, $1 \le i \le K$, are independent unit Poisson processes.
For most of the results in this section, we will make the following assumptions about the system. 
  Readers are referred to~\cite{meyn1993markov} for the study of the ergodicity of stochastic systems.
\begin{assump}
  The state space $\mathbb{X}$ is a finite set.
  \label{assump-finite-space}
\end{assump}
\begin{assump}
  $X(t)$ is ergodic on $\mathbb{X}$. It has a unique invariant distribution
  $\pi$, such that $\pi(x)>0, ~ \forall x \in \mathbb{X}$. 
  \label{assump-ergodicity}
\end{assump}
\begin{remark}
  Assumption~\ref{assump-finite-space} simplifies the analysis in this section. In particular, it implies that any function on $\mathbb{X}$, e.g., the basis function $\varphi_j$, is bounded.  
For many systems in chemical reaction applications, the state spaces, which
  although can be large, are indeed finite. This is especially the case
  when there are conservation relations in the reactions of the system.
  At the same time, we also expect the analysis presented below can be extended to systems whose
  state space is an infinite set, after taking into account additional technical
  issues. 
  \label{rmk-for-assump-finite-set-and-ergodicity}
\end{remark}

Our asymptotic analysis of the limit $T\rightarrow +\infty$ combines both
techniques from the large sample
theory~\cite{ferguson1996course,lehmann2004elements} in statistics and the
limit theorems for stochastic processes~\cite{ethier1986markov}. 
In particular, we rely on the important fact that the log-likelihood functions in \eqref{opt-problem-y-t-f} and
\eqref{likelihood-t-eps}, as well as their derivatives, can be expressed as
integrations with respect to the counting processes
\begin{equation}
    R_i(t) = \mathcal{P}_i\Big(\int_0^t a_i^*\big(X(s)\big)\,ds\Big)\,, 
  \label{r0-ri}
\end{equation}
and the corresponding compensated Poisson processes (martingales)
\begin{equation}
    \widetilde{R}_i(t) = R_i(t) - \int_0^t
    a^*_i\big(X(s)\big)\,ds \,,
\label{compensate-r0-ri}
\end{equation}
 where $1 \le i \le K$ and $t \ge 0$.
As an example, it is apparent that the process $R_i$ is related to $M_i$ in
\eqref{sum-of-mi}, i.e., the total activation number of the channel $\mathcal{C}_i$ within the
time $[0, T]$, since 
\begingroup
\setlength\abovedisplayskip{6pt}
\setlength\belowdisplayskip{6pt}
\begin{align}
      M_i = R_i(T)\,,\quad 1 \le i \le K\,,\quad \mbox{and}\qquad      M = \sum_{i=1}^K M_i = \sum_{i=1}^K R_i(T)\,.
    \label{m-mi-t}
\end{align}
\endgroup
We refer the readers to Appendix~\ref{app-2} for two limit results concerning integrations
with respect to the processes $R_i$ and $\widetilde{R}_i$ when $T\rightarrow +\infty$.

\subsection{Learning task 1: analysis of the log-likelihood maximizer}
\label{subsec-infinite-many-data}
In this subsection, we consider the first learning task in Subsection~\ref{subsec-learn-rate}.
Recall that $\bm{\omega}^* = (\omega^*_{1}, \omega^*_2, \dots,
\omega^*_N)^\top$ is the true parameter vector such that \eqref{ar-expression} holds and that
\begingroup
\setlength\abovedisplayskip{8pt}
\setlength\belowdisplayskip{8pt}
\begin{align}
  a^*_{i}(x) = a_i(x\,;\,\bm{\omega}^*)\,, \quad \forall x \in \mathbb{X}, \quad 1 \le i \le K\,.
   \label{ai-expression-repeat}
\end{align}
\endgroup
For fixed $T>0$, $\bm{\omega}^{(T)}$ denotes the solution of the minimization
problem \eqref{opt-problem-y-t}. We will study the asymptotic convergence of $\bm{\omega}^{(T)}$ to $\bm{\omega}^{*}$, as $T\rightarrow +\infty$.
It should be pointed out that the consistency of maximum likelihood estimation has been well studied in the statistics
community~\cite{wald1949,ferguson1996course,van2000asymptotic}. We refer the
readers to~\cite{mle-ctsp-feigin1976, keiding1975} for the asymptotic study of maximum likelihood estimation for continuous-time stochastic processes.

Let us first express the log-likelihood function $\ln\mathcal{L}^{(T)}$ in \eqref{opt-problem-y-t-f} and its derivatives
using the processes in \eqref{r0-ri} and \eqref{compensate-r0-ri}. For the
log-likelihood function, since the trajectory of the system is piecewise constant, we have
\begingroup\small
\begin{align} \label{exp-log-likelihood-r}
    &~~~ -\ln\mathcal{L}^{(T)}(\bm{\omega}) \nonumber \\
    &= -\sum_{i=1}^K\int_0^T \Big[\ln a_i\big(X(s)\,;\,\bm{\omega}\big)\Big]\,dR_i + \int_0^T
  a\big(X(s)\,;\,\bm{\omega}\big)\,ds \\
&= -\sum_{i=1}^K\int_0^T \Big[\ln
  a_i\big(X(s)\,;\,\bm{\omega}\big)\Big]\,d\widetilde{R}_i + \sum_{i=1}^K\int_0^T
  \Big[a_i\big(X(s)\,;\,\bm{\omega}\big)
  - a_i^*\big(X(s)\big) \ln a_i\big(X(s)\,;\,\bm{\omega}\big) \Big] \,ds\,, \nonumber
\end{align}
\endgroup
while for its first order derivatives in \eqref{opt-problem-euler-lagrange}, we obtain
\begingroup\small
\begin{align}
  \begin{split}
     \mathcal{M}^{(T)}_{j}(\bm{\omega}) &= - \int_0^T
     \frac{\varphi_{j}\big(X(s)\big)}{
      a_i\big(X(s)\,;\,\bm{\omega}\big)} \,dR_i(s) + \int_0^T
    \varphi_{j}\big(X(s)\big)\,ds \\
    &= - \int_0^T \frac{\varphi_{j}\big(X(s)\big)}{a_i\big(X(s)\,;\,\bm{\omega}\big)}
 \,d\widetilde{R}_i(s)  + \int_0^T
    \varphi_{j}\big(X(s)\big) \bigg[1 - 
    \frac{a_i^*\big(X(s)\big)}{
      a_i\big(X(s)\,;\,\bm{\omega}\big)}\bigg]\,ds\,, 
\end{split}
\label{exp-1st-derivative-r}
\end{align}
\endgroup
where $1 \le j \le N$ and $i$ is the index of channel such that $j \in \mathcal{I}_i$. Similarly, the second order
derivatives in \eqref{hessian} can be expressed as
\begingroup\small
\begin{align}
  \begin{split}
    &	\frac{\partial^2 \big(-\ln\mathcal{L}^{(T)}\big)}{\partial
	\omega_{j}\partial \omega_{j'}}(\bm{\omega})\\
	&=
	\int_0^T 
\frac{\varphi_{j}\big(X(s)\big)\,\varphi_{j'}\big(X(s)\big)}{a_i^2\big(X(s)\,;\,\bm{\omega}\big)} \,dR_i(s) \\
    &=
	\int_0^T 
\frac{\varphi_{j}\big(X(s)\big)\,\varphi_{j'}\big(X(s)\big)}{a_i^2\big(X(s)\,;\,\bm{\omega}\big)}
    \,d\widetilde{R}_i(s) + 
    	\int_0^T
	\frac{\varphi_{j}\big(X(s)\big)\,\varphi_{j'}\big(X(s)\big)}{a_i^2\big(X(s)\,;\,\bm{\omega}\big)}\,
	a_i^*\big(X(s)\big)\,ds\,,
  \end{split}
  \label{exp-2nd-derivative-r}
\end{align}
\endgroup
for two indices $1 \le j, j' \le N$ when there is a common channel index $i$, $1 \le i \le
K$, such that $j, j' \in \mathcal{I}_i$, and otherwise  
\begingroup\small
\setlength\abovedisplayskip{8pt}
\setlength\belowdisplayskip{8pt}
\begin{align*}
  \frac{\partial^2 \big(-\ln\mathcal{L}^{(T)}\big)}{\partial
  \omega_{j}\partial \omega_{j'}}(\bm{\omega}) = 0 \,,
\end{align*}
\endgroup
when $j \in \mathcal{I}_i$ and $j' \in \mathcal{I}_{i'}$ where 
$1 \le i \neq i' \le K$ are two different channel indices.

In particular,  \eqref{exp-log-likelihood-r} and \eqref{exp-1st-derivative-r} become simpler when $\bm{\omega} = \bm{\omega}^{*}$, and we have 
\begingroup\small
\begin{align}
  \begin{split}
     -\ln \mathcal{L}^{(T)}\big(\bm{\omega}^{*}\big) &= -\sum_{i=1}^K\int_0^T
     \ln a_i^*\big(X(s)\big)\,d\widetilde{R}_i + \sum_{i=1}^K\int_0^T a_i^*\big(X(s)\big) \Big[1 - \ln a_i^*\big(X(s)\big) \Big] \,ds \,,\\
     \mathcal{M}^{(T)}_{j}(\bm{\omega}^{*}) &= - 
  \int_0^T \frac{\varphi_{j}\big(X(s)\big)}{a_i^*\big(X(s)\big)}
    \,d\widetilde{R}_i(s)\,, \qquad  \forall~j \in \mathcal{I}_i\,.
 \end{split}
 \label{exp-derivatives-special}
\end{align}
\endgroup
Let us first recall the law of large numbers (LLN) for the unit
Poisson processes $\mathcal{P}_i$, $1 \le i \le K$, which states that~\cite{Anderson2011}
\begingroup\small
  \begin{align}
    \lim_{t\rightarrow +\infty} \sup_{ u\le u_0} \Big|\frac{\mathcal{P}_i(ut)}{t} - u\Big|
    = 0\,,\quad a.s., \quad \forall\,u_0 > 0\,. 
    \label{lln-poisson}
  \end{align}
\endgroup
It allows us to study the simple case when the reaction channel $\mathcal{C}_i$
contains a single reaction.
\begin{proposition}
Given $1\le i \le K$, suppose $N_i = 1$ and $\mathcal{I}_i=\{j\}$, for
  some $1 \le j \le N$. Assume that 
  \begingroup\small
\setlength\abovedisplayskip{8pt}
\setlength\belowdisplayskip{8pt}
  \begin{align}
    \lim_{T\rightarrow +\infty} \int_0^T \varphi_{j}\big(X(s)\big)\,ds = +\infty, \quad a.s.
    \label{ni1-divergence-assump}
  \end{align}
 \endgroup 
  Then $\lim\limits_{T\rightarrow +\infty} \omega^{(T)}_{j} =
  \omega^{*}_{j}$, almost surely. 
  \label{prop-for-ni-is-1}
\end{proposition}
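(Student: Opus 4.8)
The plan is to exploit the closed-form maximizer \eqref{opt-omega-ni1} available in the single-reaction case and to reduce the statement to the law of large numbers \eqref{lln-poisson} for the unit Poisson process $\mathcal{P}_i$, evaluated at a random but almost surely divergent argument. First I would rewrite both the numerator and the denominator of \eqref{opt-omega-ni1} in terms of the counting process \eqref{r0-ri}. Since the trajectory $X(s)$ is piecewise constant, equal to $y_l$ on an interval of length $t_l$, the denominator is exactly the time integral
\[
  \sum_{l=0}^{M} t_l\,\varphi_{j}(y_l) = \int_0^T \varphi_{j}\big(X(s)\big)\,ds =: U(T)\,,
\]
while by \eqref{m-mi-t} the numerator equals $M_i = R_i(T)$. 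Because $N_i = 1$ and $\mathcal{I}_i = \{j\}$, the true channel propensity is $a_i^*(x) = \omega_j^*\,\varphi_j(x)$, so \eqref{r0-ri} gives
\[
  M_i = R_i(T) = \mathcal{P}_i\Big(\int_0^T a_i^*\big(X(s)\big)\,ds\Big) = \mathcal{P}_i\big(\omega_j^*\,U(T)\big)\,.
\]

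Substituting these two identities into \eqref{opt-omega-ni1} produces the factorization
\[
  \omega_j^{(T)} = \frac{\mathcal{P}_i\big(\omega_j^* U(T)\big)}{U(T)} = \omega_j^*\,\frac{\mathcal{P}_i\big(\omega_j^* U(T)\big)}{\omega_j^* U(T)}\,,
\]
which I would then pass to the limit path by path. If $\omega_j^* = 0$ the channel is never activated, $M_i \equiv 0$, and the claim is trivial, so I may assume $\omega_j^* > 0$. On the almost-sure event where both assumption \eqref{ni1-divergence-assump} and the limit \eqref{lln-poisson} hold, we have $U(T) \to +\infty$, hence $\omega_j^* U(T) \to +\infty$; taking $u_0 = 1$ and reading off the value at $u = 1$ in \eqref{lln-poisson} yields $\mathcal{P}_i(r)/r \to 1$ as $r \to +\infty$. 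Applying this with the diverging random argument $r = \omega_j^* U(T)$ shows that the last fraction tends to $1$, so $\omega_j^{(T)} \to \omega_j^*$ almost surely.

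The one delicate point, which I would state most carefully, is the composition of the Poisson process with the random time change $U(T)$: one cannot argue through convergence in probability and naively substitute a random argument. The almost-sure formulation of \eqref{lln-poisson} is precisely what legitimizes the substitution — once a single sample path is fixed on which $\mathcal{P}_i(r)/r \to 1$ and $U(T) \to +\infty$, the divergent argument $\omega_j^* U(T)$ may be inserted directly. Everything else is the elementary algebra of the factorization above.
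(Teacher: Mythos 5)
Your proposal is correct and follows essentially the same route as the paper: rewrite the explicit maximizer \eqref{opt-omega-ni1} via \eqref{r0-ri} and \eqref{m-mi-t} as $\mathcal{P}_i\big(\omega_j^* \int_0^T \varphi_j(X(s))\,ds\big)$ divided by $\int_0^T \varphi_j(X(s))\,ds$, then invoke the almost-sure law of large numbers \eqref{lln-poisson} along the divergent random argument guaranteed by \eqref{ni1-divergence-assump}. Your explicit treatment of the degenerate case $\omega_j^*=0$ and of the pathwise substitution into the a.s.\ limit is a welcome elaboration of details the paper leaves implicit.
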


Note that Assumption~\ref{assump-finite-space} and
Assumption~\ref{assump-ergodicity} are actually not necessary in Proposition~\ref{prop-for-ni-is-1}. In
what follows, we study the case when $N_i > 1$, i.e., when more than one
reactions belong to the same reaction channel $\mathcal{C}_i$. We need to further make the following two assumptions.
\begin{assump}
There is a unique vector $\bm{\omega}^{*} \in \mathbb{R}^N$, such that \eqref{ai-expression-repeat} is satisfied.
  \label{assump-task1-uniqueness}
\end{assump}
\begin{assump}
  The basis functions $\varphi_j$, $1\le j \le N$, are nonnegative on $\mathbb{X}$.
  \label{assump-task1-varphi-positive}
\end{assump}

As a consequence of Assumption~\ref{assump-task1-uniqueness}, we have the
following lemma which concerns the uniqueness of $\bm{\omega}^{(T)}$, when $T$ is sufficiently large. 
    \begin{lemma}
      Suppose that Assumptions~\ref{assump-finite-space}, \ref{assump-ergodicity}, \ref{assump-task1-uniqueness}, \ref{assump-task1-varphi-positive} hold. With probability one, the minimization problem
      \eqref{opt-problem-y-t}--\eqref{opt-problem-y-t-f} has a unique solution
      $\bm{\omega}^{(T)}$, when $T$ is sufficiently large. 
      \label{lemma-uniqueness-omega-t}
    \end{lemma}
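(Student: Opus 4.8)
The plan is to reduce the claim, via Proposition~\ref{prop-uniqueness-conditions}, to a statement about the columns of the matrices $\Phi_i$ in \eqref{mat-phi-i}, and then to show that those columns become linearly independent once $T$ is large. By that proposition it suffices to prove that, with probability one, there is a random but almost surely finite time $T_0$ such that for all $T \ge T_0$ and every $1 \le i \le K$ the column vectors $\Phi_{i,1}, \dots, \Phi_{i,N_i}$ are linearly independent. Writing $\mathcal{I}_i = \{j_1, \dots, j_{N_i}\}$, linear independence of these columns is equivalent to saying that the functions $\varphi_{j_1}, \dots, \varphi_{j_{N_i}}$, restricted to the finite set of states $\{y_{l_k^{(i)}} : 1 \le k \le M_i\}$ at which $\mathcal{C}_i$ has been activated up to time $T$, admit no nontrivial vanishing linear combination.

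The key algebraic observation is that linear independence of $\{\varphi_j : j \in \mathcal{I}_i\}$ on all of $\mathbb{X}$ is equivalent to their linear independence on the smaller set $\mathbb{X}_i := \{x \in \mathbb{X} : a_i^*(x) > 0\}$. Indeed, in learning task~$1$ every basis function corresponds to a reaction genuinely present in the system, so $\omega_j^* > 0$ for all $j$; hence at any state $x$ with $a_i^*(x) = \sum_{j \in \mathcal{I}_i} \omega_j^* \varphi_j(x) = 0$, Assumption~\ref{assump-task1-varphi-positive} (nonnegativity of the $\varphi_j$) forces every summand $\omega_j^* \varphi_j(x)$, and thus every $\varphi_j(x)$ with $j \in \mathcal{I}_i$, to vanish. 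Consequently any combination $\sum_{j \in \mathcal{I}_i} c_j \varphi_j$ vanishes identically on $\mathbb{X} \setminus \mathbb{X}_i$, so it vanishes on $\mathbb{X}$ iff it vanishes on $\mathbb{X}_i$. Since the index sets $\mathcal{I}_i$ are disjoint, Assumption~\ref{assump-task1-uniqueness} is precisely the statement that for each $i$ the linear system $\sum_{j \in \mathcal{I}_i} \omega_j \varphi_j = a_i^*$ has a unique solution on $\mathbb{X}$, which is equivalent to linear independence of $\{\varphi_j : j \in \mathcal{I}_i\}$ on $\mathbb{X}$, and hence, by the above, on $\mathbb{X}_i$.

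It then remains to show that, almost surely, for $T$ large enough the set of activated states $\{y_{l_k^{(i)}}\}$ covers all of $\mathbb{X}_i$; combined with the previous paragraph this yields linear independence of the columns of $\Phi_i$. Here I would use ergodicity (Assumption~\ref{assump-ergodicity}) together with finiteness (Assumption~\ref{assump-finite-space}): every state is recurrent and visited infinitely often almost surely, and at each visit to a state $x \in \mathbb{X}_i$ the next jump is through channel $\mathcal{C}_i$ with the fixed positive probability $p^*(i\,;\,x) = a_i^*(x)/a^*(x) > 0$, independently across successive visits by the strong Markov property. The second Borel--Cantelli lemma then guarantees that $\mathcal{C}_i$ fires from $x$ infinitely often, in particular at least once; letting $T_0$ be the maximum, over the finitely many pairs $(i,x)$ with $x \in \mathbb{X}_i$, of the first such firing time yields an almost surely finite $T_0$ with the desired covering property for all $T \ge T_0$.

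I expect the main obstacle to be this last, probabilistic step: translating the ergodicity of the continuous-time chain into the almost-sure statement that every admissible (state, channel) activation actually occurs in finite time, and doing so simultaneously over all channels and states. The algebraic reduction is short once the nonnegativity trick is spotted, and the appeal to Proposition~\ref{prop-uniqueness-conditions} is immediate; the care is needed in the recurrence and Borel--Cantelli argument and in checking that $T_0$ can be taken finite for all $i$ at once, which is exactly where finiteness of $\mathbb{X}$ is essential.
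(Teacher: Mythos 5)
Your proof is correct and follows essentially the same route as the paper's: reduce via Proposition~\ref{prop-uniqueness-conditions} to linear independence of the columns of $\Phi_i$, then use ergodicity together with Assumption~\ref{assump-task1-uniqueness} to obtain it for all sufficiently large $T$ (the paper argues by contradiction along a subsequence $T_k\to\infty$, you argue directly). If anything, your version is more careful at one point: the paper passes from ``all states are visited'' to a vanishing linear relation on all of $\mathbb{X}$, silently ignoring states at which channel $\mathcal{C}_i$ is never activated, whereas you treat these explicitly via the nonnegativity of the $\varphi_j$ (which forces $\varphi_j(x)=0$ for $j\in\mathcal{I}_i$ whenever $a_i^*(x)=0$) and a Borel--Cantelli argument guaranteeing activation at every state with $a_i^*(x)>0$.
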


    To proceed, we will need the Kullback--Leibler divergence between two probability distributions~\cite{mackay-info-theory-2002}. 
    It is known that the Kullback--Leibler divergence is nonnegative and
    it equals zero if and only if the two distributions are identical.
In particular, for the probability distributions whose density functions are
    $\psi$ and $p$ in \eqref{phi-p-density}, the Kullback--Leibler divergences
    can be computed as
    \begingroup
    \small
\begin{align}
  \begin{split}
    D_{KL}\Big(\psi\big(\cdot\,;\,x,\bm{\omega}'\big)\,\Big|\,
	\psi\big(\cdot\,;\,x,\bm{\omega}\big)\Big) 
	&= \int_0^{+\infty} \ln
	\frac{\psi\big(t\,;\,x,\bm{\omega}'\big)}{\psi\big(t\,;\,x,\bm{\omega}\big)}
	\psi\big(t\,;\,x,\bm{\omega}'\big)\,dt \\
	&=
	-\ln\frac{a(x\,;\,\bm{\omega})}{a(x\,;\,\bm{\omega}')} + 
	\frac{a(x\,;\,\bm{\omega})}{a(x\,;\,\bm{\omega}')} - 1\,, \\
D_{KL}\Big(p\big(\cdot\,;\,x,\bm{\omega}'\big)\,\Big|\,
	p\big(\cdot\,;\,x,\bm{\omega}\big)\Big) 
	&= \sum_{i=1}^K \ln
	\frac{p\big(i\,;\,x,\bm{\omega}'\big)}{p\big(i\,;\,x,\bm{\omega}\big)}
	p\big(i\,;\,x,\bm{\omega}'\big)  \\
	&= \ln\frac{a(x\,;\,\bm{\omega})}{a(x\,;\,\bm{\omega}')} -
	\sum_{i=1}^K
	\frac{a_i(x\,;\,\bm{\omega}')}{a(x\,;\,\bm{\omega}')} 
	\ln\frac{a_i(x\,;\,\bm{\omega})}{a_i(x\,;\,\bm{\omega}')}\,,
      \end{split}
      \label{d-kl-phi-p}
\end{align}
\endgroup
respectively, where $x \in \mathbb{X}$ and  $\bm{\omega}$, $\bm{\omega}'$ are two parameter vectors in \eqref{omega-vector}.

The convergence of $\bm{\omega}^{(T)}$ towards $\bm{\omega}^{*}$ as $T\rightarrow +\infty$ is established in the following result.
\begin{proposition}
  Suppose that Assumptions~\ref{assump-finite-space}, \ref{assump-ergodicity}, \ref{assump-task1-uniqueness}, \ref{assump-task1-varphi-positive} hold.
  \begin{enumerate}[wide]
    \item
      For any vector $\bm{\omega}$ in \eqref{omega-vector}, we have  
      \begingroup
      \small
      \begin{align*}
	\begin{split}
	&~~~	\lim_{T\rightarrow +\infty} 
\frac{\ln \mathcal{L}^{(T)}(\bm{\omega}) -
	\ln\mathcal{L}^{(T)}(\bm{\omega}^{*})}{T}  \\
	    &=\sum_{x\in\mathbb{X}} \bigg[
	    a\big(x\,;\,\bm{\omega}^{*}\big) - a(x\,;\,\bm{\omega}) + 
	    \sum_{i=1}^K
	    \bigg(a_i\big(x\,;\,\bm{\omega}^{*}\big)
	\ln\frac{a_i(x\,;\,\bm{\omega})}{a_i\big(x\,;\,\bm{\omega}^{*}\big)}
	\bigg)\bigg]\, \pi(x)\,\\
	    &=-\sum_{x\in\mathbb{X}}
	\bigg[D_{KL}\Big(\psi\big(\cdot\,;\,x,\bm{\omega}^{*}\big)\,\Big|\,
	\psi\big(\cdot\,;\,x,\bm{\omega}\big)\Big) +
	D_{KL}\Big(p\big(\cdot\,;\,x,\bm{\omega}^{*}\big)\,\Big|\,
	p\big(\cdot\,;\,x,\bm{\omega}\big)\Big)\bigg]
	a\big(x\,;\,\bm{\omega}^{*}\big)\,\pi(x) \\
	  &\le 0 \,.
	\end{split}
      \end{align*}
      \endgroup
    \item
      Let $\bm{\omega}^{(T)}=(\omega^{(T)}_1, 
      \omega^{(T)}_2,\dots, \omega^{(T)}_N)^\top$ be the unique
      minimizer of the problem \eqref{opt-problem-y-t}, such that
      $\omega^{(T)}_j \ge 0$ for each $1 \le j \le N$.
      With probability one, it holds that $\lim\limits_{T\rightarrow +\infty} \bm{\omega}^{(T)} = \bm{\omega}^{*}$.
  \end{enumerate}
    \label{prop-omega-limit-general}
\end{proposition}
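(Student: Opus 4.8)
The plan is to establish part (1) first, by pushing everything through the martingale decomposition already recorded in \eqref{exp-log-likelihood-r}, and then to deduce part (2) from part (1) by a convexity argument. For part (1), I would subtract the two expansions: use \eqref{exp-log-likelihood-r} at a general $\bm{\omega}$ and the first line of \eqref{exp-derivatives-special} at $\bm{\omega}^*$, and write $\frac{1}{T}\big[\ln\mathcal{L}^{(T)}(\bm{\omega})-\ln\mathcal{L}^{(T)}(\bm{\omega}^*)\big]$ as a sum of two kinds of terms: stochastic integrals $\frac{1}{T}\int_0^T(\cdots)\,d\widetilde{R}_i$ against the compensated processes, and ordinary time averages $\frac{1}{T}\int_0^T g_i(X(s))\,ds$. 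Under Assumption~\ref{assump-finite-space} every function of the state (in particular each $\varphi_j$ and each $\ln a_i(\cdot\,;\bm{\omega})$ where finite) is bounded, so the martingale terms vanish almost surely as $T\to+\infty$ by the limit lemma of Appendix~\ref{app-2}, while each time average converges almost surely to $\sum_{x\in\mathbb{X}} g_i(x)\pi(x)$ by ergodicity (Assumption~\ref{assump-ergodicity}). Collecting the surviving drift contributions and using $\sum_i a_i = a$ yields the first displayed limit. The second equality is then a pointwise algebraic identity: substituting the closed forms \eqref{d-kl-phi-p} for the two Kullback--Leibler divergences and multiplying by $a(x\,;\bm{\omega}^*)$ reproduces exactly the summand $a(x\,;\bm{\omega}^*)-a(x\,;\bm{\omega})+\sum_{i=1}^K a_i(x\,;\bm{\omega}^*)\ln\frac{a_i(x\,;\bm{\omega})}{a_i(x\,;\bm{\omega}^*)}$. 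Nonnegativity of the divergences, together with $a(x\,;\bm{\omega}^*)\ge 0$ and $\pi(x)>0$, gives the final inequality.

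For part (2), denote by $F(\bm{\omega})$ the limit in part (1). Since $-\ln\mathcal{L}^{(T)}$ is convex (its Hessian quadratic form \eqref{hessian-f} is nonnegative, cf.\ Proposition~\ref{prop-uniqueness-conditions}), $F$ is concave as a pointwise limit of concave functions. From part (1), $F(\bm{\omega})=0$ holds precisely when both divergences vanish at \emph{every} $x\in\mathbb{X}$ (this is where $\pi(x)>0$ is essential), i.e.\ when $a(x\,;\bm{\omega})=a(x\,;\bm{\omega}^*)$ and $a_i(x\,;\bm{\omega})=a_i(x\,;\bm{\omega}^*)$ for all $i$ and all $x$; Assumption~\ref{assump-task1-uniqueness} then forces $\bm{\omega}=\bm{\omega}^*$. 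Hence $\bm{\omega}^*$ is the unique maximizer of $F$, and $F(\bm{\omega})<0$ elsewhere.

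It remains to transfer this to the minimizers. Set $g_T = \frac{1}{T}\big[-\ln\mathcal{L}^{(T)}(\cdot)+\ln\mathcal{L}^{(T)}(\bm{\omega}^*)\big]$, which is convex and, for $T$ large, has the unique minimizer $\bm{\omega}^{(T)}$ by Lemma~\ref{lemma-uniqueness-omega-t}; by part (1), $g_T\to G:=-F$ pointwise almost surely. Fixing a countable dense set and intersecting the corresponding probability-one events, this convergence holds almost surely and simultaneously on a dense set, and the standard convexity lemma (pointwise convergence of convex functions implies uniform convergence on compacta) upgrades it to local uniform convergence. A trapping argument then closes the proof: given $\delta>0$, put $m=\min_{\|\bm{\omega}-\bm{\omega}^*\|=\delta} G>0$; for $T$ large, $g_T(\bm{\omega}^*)<m/2$ while $g_T>m/2$ on the sphere $\|\bm{\omega}-\bm{\omega}^*\|=\delta$, so convexity of $g_T$ (evaluated along the segment joining $\bm{\omega}^*$ to any candidate outside the ball) forbids the minimizer from leaving the ball, giving $\|\bm{\omega}^{(T)}-\bm{\omega}^*\|<\delta$. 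As $\delta$ is arbitrary, $\bm{\omega}^{(T)}\to\bm{\omega}^*$ almost surely.

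The analytic engine is thus the pair of limits from Appendix~\ref{app-2} (vanishing of the $1/T$-scaled martingale integrals) and the ergodic average of the drift, both leaning on boundedness from the finite state space. The step I expect to be most delicate is promoting the fixed-$\bm{\omega}$ almost-sure convergence of part (1) to something strong enough to move the argmin; the convexity lemma is what makes this clean, since it lets the trapping argument run without a separate coercivity estimate and without uniform control over $\bm{\omega}$. One should also dispose of the degenerate case where $a_i(x\,;\bm{\omega})=0$ while $a_i^*(x)>0$, in which $F(\bm{\omega})=-\infty$ and the point is trivially non-optimal.
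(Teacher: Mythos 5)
Your proof is correct, and part (1) is essentially the paper's argument: the same split of $T^{-1}[\ln\mathcal{L}^{(T)}(\bm{\omega})-\ln\mathcal{L}^{(T)}(\bm{\omega}^*)]$ via \eqref{exp-log-likelihood-r} and \eqref{exp-derivatives-special} into compensated-martingale integrals (killed by Lemma~\ref{lemma-integral-ri-limit}) plus ergodic time averages, followed by the algebraic identification with the Kullback--Leibler divergences through \eqref{d-kl-phi-p}. Part (2), however, takes a genuinely different route. The paper first proves boundedness of $(\bm{\omega}^{(T)})_{T>0}$ by manipulating the Euler--Lagrange equations \eqref{opt-problem-euler-lagrange} (this is where Assumption~\ref{assump-task1-varphi-positive} and $\omega_j^{(T)}\ge 0$ are used), establishes the auxiliary lower bound \eqref{lower-bound-for-ai} on $a_i(x\,;\,\bm{\omega}^{(T)})$ so that the $\liminf$ of the scaled log-likelihood difference can be evaluated along the moving argument $\bm{\omega}^{(T)}$, extracts a limit point, and squeezes the divergences to zero. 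You instead run the standard argmin-consistency argument for convex M-estimators: pointwise almost-sure convergence on a countable dense set, upgraded to locally uniform convergence by the convexity lemma, then a trapping argument on a small sphere around $\bm{\omega}^*$ (where $g_T(\bm{\omega}^*)=0$ identically makes the trap immediate). Your route buys a shorter proof that needs no separate boundedness or lower-bound estimates, but it leans entirely on convexity of $-\ln\mathcal{L}^{(T)}$; the paper's limit-point scheme is the template that carries over to Lemma~\ref{bounded-log-likelihood-imply-bounded-inf-sup} and Theorem~\ref{thm-omega-limit-general-lambda} for the sparse problem, which is presumably why it is preferred. Two small points to make explicit in a full write-up: the closed ball on which all $g_T$ and the limit are finite convex exists because $\mathbb{X}$ is finite and $a_i(\cdot\,;\,\bm{\omega}^*)=a_i^*>0$ at every state where channel $\mathcal{C}_i$ is ever activated; and at states with $a(x\,;\,\bm{\omega}^*)=0$ the weighted divergences vanish trivially, so one must fall back on the first form of the limit (which contributes $-a(x\,;\,\bm{\omega})\,\pi(x)$) to force $a_i(x\,;\,\bm{\omega})=0$ there before invoking Assumption~\ref{assump-task1-uniqueness} --- the paper glosses over this degenerate case in the same way.
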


We now study the asymptotic normality of the sequence $\bm{\omega}^{(T)}$ as $T\rightarrow +\infty$. We have the following result.
\begin{proposition}
Suppose that Assumptions~\ref{assump-finite-space}, \ref{assump-ergodicity}, \ref{assump-task1-uniqueness}, \ref{assump-task1-varphi-positive} hold.
  Let $\mathcal{F}$ be the $N\times N$ matrix whose entries are 
  \begingroup
  \small
  \begin{align}
    \mathcal{F}_{j,j'} = 
      \begin{cases}
	\,\sum\limits_{x \in \mathbb{X}} \frac{\varphi_{j}(x) \varphi_{j'}(x)}
	{a_i(x\,;\,\bm{\omega}^{*})}\,\pi(x)\,, &  
	\mbox{if}~~j, j' \in \mathcal{I}_i \,,~\mbox{for some}~~1 \le i \le
	K\,,\\[12pt]
	\,0\,, &  \mbox{otherwise} \,, 
      \end{cases}
      \label{covariance-c-varphi}
  \end{align}
  \endgroup
  for $1 \le j,\,j' \le N$.  Then, as $T \rightarrow +\infty$, $\sqrt{T}
  \big(\bm{\omega}^{(T)} - \bm{\omega}^{*}\big)$ converges in
  distribution to  $\mathcal{Z}\sim \mathcal{N}(\bm{0},
  \mathcal{F}^{-1})$, i.e., $\mathcal{Z}$ is a Gaussian random variable whose
  mean equals zero and whose covariance matrix is $\mathcal{F}^{-1}$. 
  \label{prop-asymptotic-normality}
\end{proposition}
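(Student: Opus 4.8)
The plan is to follow the classical route to asymptotic normality of maximum-likelihood-type estimators, adapted to the continuous-time martingale structure already exposed in \eqref{exp-derivatives-special} and \eqref{exp-2nd-derivative-r}. By Proposition~\ref{prop-omega-limit-general} we have $\bm{\omega}^{(T)}\to\bm{\omega}^*$ almost surely, and since the true rate constants are strictly positive (so the nonnegativity constraints are inactive in the limit), for all sufficiently large $T$ the minimizer $\bm{\omega}^{(T)}$ lies in the interior of the feasible region and therefore satisfies the Euler--Lagrange equation \eqref{opt-problem-euler-lagrange} with equality, $\mathcal{M}^{(T)}(\bm{\omega}^{(T)})=\bm{0}$. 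Writing $H^{(T)}(\bm{\omega})$ for the Hessian matrix with entries in \eqref{exp-2nd-derivative-r}, the integral form of Taylor's theorem gives
\begin{align*}
\bm{0}=\mathcal{M}^{(T)}(\bm{\omega}^{(T)})=\mathcal{M}^{(T)}(\bm{\omega}^*)+\bar H^{(T)}\big(\bm{\omega}^{(T)}-\bm{\omega}^*\big)\,,\qquad \bar H^{(T)}=\int_0^1 H^{(T)}\big(\bm{\omega}^*+s(\bm{\omega}^{(T)}-\bm{\omega}^*)\big)\,ds\,,
\end{align*}
which, for large $T$ (where $\tfrac1T\bar H^{(T)}$ is invertible), rearranges to
\begin{align*}
\sqrt{T}\big(\bm{\omega}^{(T)}-\bm{\omega}^*\big)=-\Big[\tfrac{1}{T}\bar H^{(T)}\Big]^{-1}\,\tfrac{1}{\sqrt{T}}\,\mathcal{M}^{(T)}(\bm{\omega}^*)\,.
\end{align*}
The statement will then follow from (i) an almost-sure limit for the scaled Hessian and (ii) a central limit theorem for the scaled score, combined by Slutsky's theorem.

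For (i), the scaled Hessian is a sum of stochastic integrals against the counting processes $R_i$ in \eqref{r0-ri}. Using the representation \eqref{exp-2nd-derivative-r}, the ergodicity of $X$ (Assumption~\ref{assump-ergodicity}), the boundedness of the basis functions (Assumption~\ref{assump-finite-space}), and the limit lemma for $R_i$-integrals from Appendix~\ref{app-2}, I would show that $\tfrac{1}{T}H^{(T)}(\bm{\omega}^*)$ converges almost surely to the block-diagonal matrix $\mathcal{F}$ in \eqref{covariance-c-varphi}: the compensator $a_i^*$ cancels one power of $a_i(\cdot\,;\bm{\omega}^*)=a_i^*$ from the denominator, leaving exactly $\sum_x \varphi_j(x)\varphi_{j'}(x)\,a_i^*(x)^{-1}\pi(x)$. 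The same limit holds for $\tfrac1T\bar H^{(T)}$, since the evaluation points $\bm{\omega}^*+s(\bm{\omega}^{(T)}-\bm{\omega}^*)$ all converge to $\bm{\omega}^*$ and the integrand depends continuously on $\bm{\omega}$, so a uniform-in-neighborhood estimate transfers the limit. Invertibility of $\mathcal{F}$ uses the nondegeneracy already exploited in Proposition~\ref{prop-uniqueness-conditions}: each diagonal block is positive definite because $\pi>0$ on all of $\mathbb{X}$ and, by Assumption~\ref{assump-task1-uniqueness}, the functions $\{\varphi_j:j\in\mathcal{I}_i\}$ are linearly independent on $\mathbb{X}$.

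For (ii), the score at the true parameter is, by \eqref{exp-derivatives-special}, a vector of stochastic integrals against the compensated Poisson martingales $\widetilde{R}_i$ in \eqref{compensate-r0-ri}. I would view $\tfrac{1}{\sqrt{T}}\mathcal{M}^{(T)}(\bm{\omega}^*)$ as a continuous-time martingale and apply the martingale central limit theorem~\cite{ethier1986markov}. The predictable quadratic covariation between components $j\in\mathcal{I}_i$ and $j'\in\mathcal{I}_{i'}$ vanishes for $i\neq i'$ (the processes $\mathcal{P}_i$ are independent), and for $i=i'$ it equals $\tfrac{1}{T}\int_0^T \varphi_j(X(s))\varphi_{j'}(X(s))\,a_i^*(X(s))^{-1}\,ds$, since the predictable variation of $\widetilde{R}_i$ is $\int_0^t a_i^*(X(s))\,ds$; by ergodicity this converges almost surely to exactly $\mathcal{F}_{j,j'}$. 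The Lindeberg-type jump condition is immediate: each jump of the scaled integral has size $O(T^{-1/2})$, uniformly bounded by Assumption~\ref{assump-finite-space}, so the contribution of large jumps vanishes. Hence $\tfrac{1}{\sqrt{T}}\mathcal{M}^{(T)}(\bm{\omega}^*)$ converges in distribution to $\mathcal{N}(\bm{0},\mathcal{F})$.

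Combining (i) and (ii) via Slutsky's theorem and the continuous mapping theorem yields $\sqrt{T}(\bm{\omega}^{(T)}-\bm{\omega}^*)\to -\mathcal{F}^{-1}\mathcal{N}(\bm{0},\mathcal{F})=\mathcal{N}(\bm{0},\mathcal{F}^{-1})$ in distribution, using that a linear image $A\mathcal{Z}$ of $\mathcal{Z}\sim\mathcal{N}(\bm{0},\mathcal{F})$ is $\mathcal{N}(\bm{0},A\mathcal{F}A^\top)$, here with the symmetric $A=\mathcal{F}^{-1}$ so that $A\mathcal{F}A^\top=\mathcal{F}^{-1}$ and the sign is immaterial. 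I expect the main obstacle to be the rigorous verification of the martingale CLT hypotheses in step (ii)—in particular, establishing convergence of the predictable quadratic variation and negligibility of the jumps simultaneously, and controlling the random intermediate argument in $\bar H^{(T)}$; the latter is handled by the almost-sure sandwich $\bm{\omega}^*+s(\bm{\omega}^{(T)}-\bm{\omega}^*)\to\bm{\omega}^*$ together with the uniform (in a shrinking neighborhood of $\bm{\omega}^*$) control of the Hessian integrand afforded by Assumption~\ref{assump-finite-space}.
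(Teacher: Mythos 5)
Your proposal is correct and follows essentially the same route as the paper: a Taylor expansion of the score around $\bm{\omega}^{*}$ (your $\bar H^{(T)}$ is the paper's $\mathcal{B}^{(T)}$ and $-\tfrac{1}{\sqrt{T}}\mathcal{M}^{(T)}(\bm{\omega}^{*})$ is its $\mathcal{W}^{(T)}$), an almost-sure limit of the scaled Hessian via the $R_i$-integral limit lemma and consistency from Proposition~\ref{prop-omega-limit-general}, the martingale CLT for the score with the same quadratic-variation computation that the paper packages into Lemma~\ref{lemma-integral-ri-ctl}, and Slutsky's theorem to conclude. The only cosmetic difference is that you explicitly address the inactivity of the nonnegativity constraints before invoking the first-order condition, a point the paper passes over silently.
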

\subsection{Learning task 2: asymptotic analysis of the sparse optimization problem}
\label{subsec-asymptotic-sparse-learning}
Based on the analysis in Subsection~\ref{subsec-infinite-many-data}, in this subsection we study the
minimizer $\bm{\omega}^{(T,\epsilon,\lambda)}$ of the sparse minimization
problem \eqref{opt-problem-1} as $T\rightarrow +\infty$, where both
$\epsilon=\epsilon(T)$
and $\lambda=\lambda(T)$ depend on $T$.

Recall that $\psi^*, p^*$ are the probability densities (distributions) in \eqref{phi-p-density-true}.
With the convention $G_0(z) = \lim\limits_{\epsilon \rightarrow 0+}
G_\epsilon(z) = \max(z,0)$, for $z \in \mathbb{R}$, we will denote 
\begingroup
\small
\setlength\abovedisplayskip{6pt}
\setlength\belowdisplayskip{6pt}
  \begin{align*}
    a_i^{(0)}\big(x\,;\,\bm{\omega}\big) =
     \max\Big(\sum_{j\in \mathcal{I}_i}
    \omega_{j} \varphi_{j}(x),0\Big)\,, \qquad 
a^{(0)}\big(x\,;\,\bm{\omega}\big) =
     \sum_{i=1}^K\max\Big(\sum_{j\in \mathcal{I}_i}
    \omega_{j} \varphi_{j}(x),0\Big)\,,
\end{align*}
\endgroup
and, correspondingly,  
\begingroup
\small
\begin{align*}
  \begin{split}
    \psi^{(0)}(t\,;\,x, \bm{\omega}) &= a^{(0)}(x\,;\,\bm{\omega}) \exp\big(- a^{(0)}(x\,;\,\bm{\omega}) t\big)\,, \quad t \ge 0\,,\\
    p^{(0)}(i\,;x,\bm{\omega}) &=
    \frac{a^{(0)}_i(x\,;\,\bm{\omega})}{a^{(0)}(x\,;\,\bm{\omega})}\,, \quad 1 \le i \le K\,.
  \end{split}
\end{align*}
\endgroup

Instead of Assumption~\ref{assump-task1-uniqueness}, here we assume that the
set of basis functions is chosen such that the underlying (true) system can be uniquely parameterized.
\begin{assump}
There is a unique vector $\bm{\omega}^{*} \in \mathbb{R}^N$, such that
\begingroup
  \small
\setlength\abovedisplayskip{6pt}
\setlength\belowdisplayskip{6pt}
\begin{align}
  a_i^*(x) = a_i^{(0)}(x\,;\,\bm{\omega}^{*})\,, \quad \forall~x\in
  \mathbb{X}\,,\quad 1 \le i \le K\,.
  \label{true-ai-can-be-represented}
\end{align}
\endgroup
  \label{assump-task2-omega-true}
\end{assump}
We also need the following assumption in order to guarantee the boundedness of
$\bm{\omega}^{(T,\epsilon, \lambda)}$.  
\begin{assump}
  For each $1 \le i \le K$, assume that the index set is $\mathcal{I}_i = \big\{j_1, j_2, \dots, j_{N_i}\big\}$.
  $\bm{\eta}^{(k)} = (\eta^{(k)}_1, \eta^{(k)}_2,\dots, \eta^{(k)}_{N_i})^\top
  \in \mathbb{R}^{N_i}$, $k\ge 1$, is a sequence of vectors satisfying
  $\lim\limits_{k\rightarrow +\infty} \|\bm{\eta}^{(k)}\|_2 = +\infty$.
  Then $\exists x \in \mathbb{X}$ such that
  $a^*_i(x) > 0$ and 
  \begingroup
  \small
\setlength\abovedisplayskip{6pt}
\setlength\belowdisplayskip{6pt}
  \begin{align}
    \lim_{k\rightarrow +\infty}\Big|\sum_{l=1}^{N_i} \eta^{(k)}_l
    \varphi_{j_l}(x)\Big| = +\infty\,. 
    \label{go-to-inf-at-some-state}
    \end{align}
  \endgroup
  \label{assump-boundedness}
\end{assump}
\begin{remark}
  In fact, under Assumption~\ref{assump-task2-omega-true} (uniqueness of $\bm{\omega}^*$),  one can argue by
  contradiction and show that there exists $x \in \mathbb{X}$ such that \eqref{go-to-inf-at-some-state} holds.
  Therefore, Assumption~\ref{assump-boundedness} simply further asserts that $a^*_i(x)$ is positive.
  In particular, Assumption~\ref{assump-boundedness} is not needed if $a^*_i(x)>0$ is true for all $x\in \mathbb{X}$ and $1 \le i \le K$.
  \label{rmk-explain-assump-boundedness}
\end{remark}

Similar to \eqref{exp-1st-derivative-r} and \eqref{exp-2nd-derivative-r}, it
will be helpful to express the derivatives of the log-likelihood function in
\eqref{likelihood-t-eps} using the processes $R_i$, $\widetilde{R}_i$ in \eqref{r0-ri} and \eqref{compensate-r0-ri}.
For the first order derivative \eqref{log-likelihood-derivative-lambda}, we have  
\begingroup
\small
\begin{align}
  \begin{split}
    & ~~~ \mathcal{M}^{(T,\epsilon)}_{j}(\bm{\omega}) \\
     &= - \int_0^T
      (\ln G_\epsilon)'\Big(\sum\limits_{j'\in \mathcal{I}_i}
    \omega_{j'}\,\varphi_{j'}(X(s))\Big)\,\varphi_{j}\big(X(s)\big)\,dR_i(s) + \int_0^T
G_\epsilon'\Big(\sum\limits_{j'\in \mathcal{I}_i}
    \omega_{j'}\,\varphi_{j'}(X(s))\Big)
    \varphi_{j}\big(X(s)\big)\,ds \\
    &= 
  - \int_0^T
      (\ln G_\epsilon)'\Big(\sum\limits_{j'\in \mathcal{I}_i}
    \omega_{j'}\,\varphi_{j'}(X(s))\Big)\,\varphi_{j}\big(X(s)\big)\,d\widetilde{R}_i(s) \\
     & ~~~ + \int_0^T
G_\epsilon'\Big(\sum\limits_{j'\in \mathcal{I}_i} \omega_{j'}\,\varphi_{j'}(X(s))\Big)
    \varphi_{j}\big(X(s)\big)\,\bigg[1-\frac{
    a^*_i(X(s))}{G_\epsilon(\sum\limits_{j'\in \mathcal{I}_i}
    \omega_{j'}\,\varphi_{j'}(X(s)))}\bigg] ds\,,
  \end{split}
 \label{exp-1st-derivative-r-eps}
\end{align}
\endgroup
for $j \in \mathcal{I}_i$. For the second order derivatives, we have 
\begingroup
\small
\begin{align}
    &\frac{\partial^2 \big(-\ln \mathcal{L}^{(T,\epsilon)}\big)}{\partial \omega_{j}\partial \omega_{j'}}(\bm{\omega}) \label{exp-2nd-derivative-r-eps}
    \\
	=& -\int_0^T (\ln G_\epsilon)''\Big(\sum\limits_{k\in \mathcal{I}_i}
    \omega_{k}\,\varphi_{k}(X(s))\Big)
    \varphi_{j}\big(X(s)\big)\,\varphi_{j'}\big(X(s)\big) \,dR_i(s) \notag \\
    & + \int_0^T 
	G_\epsilon''\Big(\sum\limits_{k\in \mathcal{I}_i} \omega_{k}\,\varphi_{k}(X(s))\Big)
\varphi_{j}\big(X(s)\big)\,\varphi_{j'}\big(X(s)\big) \,ds\notag \\
    =& -\int_0^T 
	(\ln G_\epsilon)''\Big(\sum\limits_{k\in \mathcal{I}_i}
    \omega_{k}\,\varphi_{k}(X(s))\Big)
    \varphi_{j}\big(X(s)\big)\,\varphi_{j'}\big(X(s)\big) \,d\widetilde{R}_i(s) \notag\\
    &+ \int_0^T 
\varphi_{j}\big(X(s)\big)\,\varphi_{j'}\big(X(s)\big) 
    \bigg\{
	G_\epsilon''
    \Big(\sum\limits_{k\in \mathcal{I}_i} \omega_{k}\varphi_{k}(X(s))\Big)
	-\Big[(\ln G_\epsilon)''
    \Big(\sum\limits_{k\in \mathcal{I}_i} \omega_{k}\varphi_{k}(X(s))\Big)\Big]
    a^{*}_i(X(s))\bigg\}\,\,ds \notag
\end{align}
\endgroup
when there is an index $i$, $1 \le i \le K$, such that $j, j' \in \mathcal{I}_i$, and otherwise  
\begingroup
\small
\begin{align*}
  \frac{\partial^2 \big(-\ln \mathcal{L}^{(T,\epsilon)}\big)}{\partial
  \omega_{j}\partial \omega_{j'}}(\bm{\omega}) = 0 \,,
\end{align*}
\endgroup
when $j \in \mathcal{I}_i, \, j' \in \mathcal{I}_{i'}$, for two different
indices $1 \le i \neq i' \le K$.

The following technical lemma addresses the boundedness of the minimizers of the minimization problem \eqref{opt-problem-1}.
\begin{lemma}
  Suppose that Assumptions~\ref{assump-finite-space}, \ref{assump-ergodicity}, and \ref{assump-task2-omega-true} hold. 
  The parameter $\epsilon=\epsilon(T)$ satisfies $\lim\limits_{T\rightarrow +\infty} \epsilon(T) = 0$.
  Let $\bm{\omega}^{(T,\epsilon,\lambda)}$ be the minimizer of the
  minimization problem \eqref{opt-problem-1} and $\mathcal{L}^{(T,\epsilon)}$
  be the likelihood function in \eqref{likelihood-t-eps}. 	  Then, for each index $i$,  $1 \le i \le K$, and $x \in \mathbb{X}$, such that $a^*_i(x) > 0$, we have
  \begingroup
  \small
	  \begin{align}
0 < \liminf\limits_{T\rightarrow +\infty}
      \Big(\sum_{j' \in \mathcal{I}_i} \omega^{(T,\epsilon, \lambda)}_{j'}
	    \varphi_{j'}(x)\Big) 
	    \le 
	\limsup\limits_{T\rightarrow +\infty}
      \Big(\sum_{j' \in \mathcal{I}_i} \omega^{(T,\epsilon, \lambda)}_{j'}
	    \varphi_{j'}(x)\Big) < +\infty 
	    \,, \quad a.s.
	    \label{omega-lower-bound}
	  \end{align}
	  \endgroup
	  Assuming furthermore Assumption~\ref{assump-boundedness} holds, then the
	  sequence $\bm{\omega}^{(T,\epsilon, \lambda)}$ is bounded for $T>0$.
	  \label{bounded-log-likelihood-imply-bounded-inf-sup}
\end{lemma}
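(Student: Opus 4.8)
The plan is to carry out two structural reductions before attempting any estimate. First, by the channel decomposition \eqref{opt-problem-1-sub} the problem \eqref{opt-problem-1} separates into $K$ independent minimizations, one per channel, so it suffices to control $\bm{\omega}^{(i)}$ for a fixed $i$. Second, since $\mathbb{X}$ is finite (Assumption~\ref{assump-finite-space}), I would rewrite the rescaled negative log-likelihood \eqref{likelihood-t-eps} as a finite sum over states, using the integral representation behind \eqref{exp-1st-derivative-r-eps} and the processes \eqref{r0-ri}. Writing $s_i(x;\bm{\omega}^{(i)})=\sum_{j\in\mathcal{I}_i}\omega_j\varphi_j(x)$ and introducing the empirical occupation and jump measures $\mu_T(x)=\frac1T\int_0^T \mathbf{1}_{\{X(s)=x\}}\,ds$ and $\nu_T^i(x)=\frac1T\int_0^T \mathbf{1}_{\{X(s)=x\}}\,dR_i(s)$, the per-channel objective takes the form
\begin{align*}
  \Phi_T^i(\bm{\omega}^{(i)})=\sum_{x\in\mathbb{X}} h_{T,x}\big(s_i(x;\bm{\omega}^{(i)})\big)+\lambda\|\bm{\omega}^{(i)}\|_1\,,\qquad
  h_{T,x}(w)=-\nu_T^i(x)\ln G_\epsilon(w)+\mu_T(x)\,G_\epsilon(w)\,.
\end{align*}
The essential gain is that all randomness now sits in the coefficients $\mu_T,\nu_T^i$, which by ergodicity (Assumption~\ref{assump-ergodicity}) and the limit lemmas of Appendix~\ref{app-2} converge almost surely to $\pi(x)>0$ and to $a_i^*(x)\pi(x)$; in particular $a_i^*(x)=0$ forces $\nu_T^i(x)\equiv 0$. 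This decouples the $T$-dependence of the minimizer from that of the trajectory.

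Next I would exploit per-state coercivity. For a state $x$ with $a_i^*(x)>0$, for all large $T$ both $\nu_T^i(x)$ and $\mu_T(x)$ lie in fixed compact subintervals of $(0,\infty)$. On such states $h_{T,x}$ is strictly convex (both $-\ln G_\epsilon$ and $G_\epsilon$ are strictly convex, cf.\ Appendix~\ref{app-1}) and two-sidedly coercive: as $w\to+\infty$ the linear growth $G_\epsilon(w)\sim w$ dominates the logarithm, while as $w\to 0^-$ or $w\to-\infty$ with $\epsilon\to0$ one has $G_\epsilon(w)\to0$, so $-\ln G_\epsilon(w)\to+\infty$. Its minimizer solves $G_\epsilon(w)=\nu_T^i(x)/\mu_T(x)$, which tends to $a_i^*(x)>0$ as $T\to\infty$, $\epsilon\to0$. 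I would then bound $\Phi_T^i$ at the minimizer above by its value at the feasible reference $\bm{\omega}^{*,(i)}$: by Assumption~\ref{assump-task2-omega-true}, $s_i(x;\bm{\omega}^{*,(i)})=a_i^*(x)$ wherever $a_i^*(x)>0$ and $\nu_T^i(x)\equiv0$ elsewhere, so $\Phi_T^i(\bm{\omega}^{*,(i)})$ converges to a finite limit (this is where $\epsilon(T)\to0$ is used, together with $\lambda(T)$ remaining bounded). Since each summand $h_{T,x}$ is bounded below by its minimum and those minima converge, isolating a single state $x_0$ yields $h_{T,x_0}\big(s_i(x_0;\bm{\omega}^{(i),(T,\epsilon,\lambda)})\big)\le C'<\infty$ for all large $T$.

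From $h_{T,x_0}(w)\le C'$ and the coercivity with coefficients bounded away from $0$ and $\infty$, the sublevel set $\{w:h_{T,x_0}(w)\le C'\}$ sits inside a fixed interval $[\delta,C'']$ with $\delta>0$. Indeed the linear term forces $\mu_T(x_0)\,G_\epsilon(w)\le C'+\nu_T^i(x_0)\ln G_\epsilon(w)$, hence $w\le C''$, giving $\limsup_T s_i(x_0;\bm{\omega}^{(i),(T,\epsilon,\lambda)})<+\infty$; and $-\ln G_\epsilon(w)\le C'/\nu_T^i(x_0)$ forces $G_\epsilon(w)\ge e^{-C'/\nu_T^i(x_0)}$, so $w\ge G_\epsilon^{-1}(\cdot)\to\delta>0$ as $\epsilon\to0$, giving $\liminf_T s_i(x_0;\bm{\omega}^{(i),(T,\epsilon,\lambda)})>0$. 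This is precisely \eqref{omega-lower-bound}. For the full boundedness, I would argue by contradiction: if $\bm{\omega}^{(i),(T,\epsilon,\lambda)}$ were unbounded along a subsequence, then Assumption~\ref{assump-boundedness} supplies a state $x$ with $a_i^*(x)>0$ at which $|s_i(x;\bm{\omega}^{(i),(T,\epsilon,\lambda)})|\to+\infty$, contradicting the upper bound just established; hence each $\bm{\omega}^{(i)}$, and thus $\bm{\omega}^{(T,\epsilon,\lambda)}$, is bounded.

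The step I expect to be the main obstacle is the interaction between the vanishing regularization $\epsilon(T)\to0$ and the logarithmic singularity of $\ln G_\epsilon$ at the origin, which drives the lower bound in \eqref{omega-lower-bound}: the blow-up of $-\ln G_\epsilon(w)$ occurs exactly where the limiting model $G_0=\max(\cdot,0)$ degenerates, and one must guarantee that this blow-up is not silently absorbed by the $T$-dependence of the minimizer. This is the reason the up-front reduction to the almost surely convergent empirical measures $\mu_T,\nu_T^i$ is indispensable, and the reason the argument must be performed state-by-state on the finite set $\mathbb{X}$ rather than on the processes directly; controlling the reference value $\Phi_T^i(\bm{\omega}^{*,(i)})$ additionally requires that $\lambda(T)$ stay bounded.
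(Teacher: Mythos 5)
Your proposal is correct and follows essentially the same route as the paper's proof: compare the minimizer's objective with its value at $\bm{\omega}^{*}$ (using $G_\epsilon \ge G_0$ and the limit lemmas of Appendix~\ref{app-2}) to get a finite a.s.\ bound, decompose the rescaled negative log-likelihood into per-state, per-channel terms of the form $-c_1\ln G_\epsilon(w)+c_2\,G_\epsilon(w)$, lower-bound all but one term by the minimum of that elementary function (Lemma~\ref{fun-c1logx-plus-c2x}), and extract the two-sided bound \eqref{omega-lower-bound} before invoking Assumption~\ref{assump-boundedness}. The only differences are presentational (you argue directly via sublevel sets where the paper argues by contradiction along a subsequence), and your observation that $\lambda(T)$ must stay bounded is an implicit hypothesis the paper also relies on.
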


Now we are ready to state the asymptotic results for the sequence
$(\bm{\omega}^{(T, \epsilon, \lambda)})_{T > 0}$, as $T\rightarrow +\infty$.
Readers are referred to Appendix~\ref{app-3} for their proofs.
\begin{theorem}
  Suppose that Assumptions~\ref{assump-finite-space}, \ref{assump-ergodicity}, \ref{assump-task2-omega-true}, and \ref{assump-boundedness} hold. 
  The parameters $\lambda = \lambda(T)$, $\epsilon=\epsilon(T)$ in the
  minimization problem \eqref{opt-problem-1} satisfy 
  \begin{align}
    \lim_{T\rightarrow +\infty} \lambda(T) = 0\,, \quad \lim_{T\rightarrow
    +\infty} \epsilon(T) = 0\,.
    \label{lambda-t-limit-is-zero}
  \end{align}
Then we have $\lim\limits_{T\rightarrow +\infty} \bm{\omega}^{(T, \epsilon, \lambda)} = \bm{\omega}^{*}$, a.s. 
    \label{thm-omega-limit-general-lambda}
\end{theorem}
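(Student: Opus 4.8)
The plan is to argue by subsequences together with a variational (argmin-consistency) principle, paralleling the proof of Proposition~\ref{prop-omega-limit-general} but now accounting for the smoothing function $G_\epsilon$, the vanishing regularization, and the joint limit $\epsilon(T)\to 0$. First I would invoke Lemma~\ref{bounded-log-likelihood-imply-bounded-inf-sup}: under Assumptions~\ref{assump-finite-space}, \ref{assump-ergodicity}, \ref{assump-task2-omega-true}, \ref{assump-boundedness}, the family $(\bm{\omega}^{(T,\epsilon,\lambda)})_{T>0}$ is almost surely bounded, so along any sequence $T\to+\infty$ there is a convergent subsequence with some limit $\bar{\bm{\omega}}$. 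It then suffices to show that every such subsequential limit satisfies $\bar{\bm{\omega}}=\bm{\omega}^{*}$, since a bounded sequence all of whose subsequential limits coincide must converge.

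Next I would identify the deterministic functional to which the rescaled negative log-likelihood converges. Writing $-\ln\mathcal{L}^{(T,\epsilon)}$ from \eqref{likelihood-t-eps} in terms of the counting processes $R_i$ and the compensated martingales $\widetilde{R}_i$ (in the manner of \eqref{exp-log-likelihood-r}), dividing by $T$, and letting $T\to+\infty$, the martingale integrals vanish by the limit lemmas of Appendix~\ref{app-2}, while the $ds$-integrals converge by the ergodic theorem (Assumption~\ref{assump-ergodicity}). Because $\epsilon(T)\to 0$ forces $G_{\epsilon(T)}\to G_0=\max(\cdot,0)$ uniformly (Remark~\ref{rmk-g-eps}), I expect that, almost surely,
\begin{align*}
  \lim_{T\to+\infty}\Big(-\frac{1}{T}\ln\mathcal{L}^{(T,\epsilon)}(\bm{\omega})\Big)
  = F(\bm{\omega}) := \sum_{x\in\mathbb{X}}\sum_{i=1}^K\Big[a_i^{(0)}(x;\bm{\omega})
  - a_i^*(x)\,\ln a_i^{(0)}(x;\bm{\omega})\Big]\pi(x)
\end{align*}
for each admissible $\bm{\omega}$, and uniformly on compact sets on which $a_i^{(0)}(x;\bm{\omega})$ stays bounded away from $0$ whenever $a_i^*(x)>0$. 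The lower bound \eqref{omega-lower-bound} of Lemma~\ref{bounded-log-likelihood-imply-bounded-inf-sup} guarantees that $\bm{\omega}^{(T,\epsilon,\lambda)}$, hence $\bar{\bm{\omega}}$, lies in such a set, so that $F(\bar{\bm{\omega}})$ is finite and the uniform convergence gives $-\frac{1}{T_k}\ln\mathcal{L}^{(T_k,\epsilon)}(\bm{\omega}^{(T_k,\epsilon,\lambda)})\to F(\bar{\bm{\omega}})$ along the subsequence.

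I would then exploit the minimizing property of $\bm{\omega}^{(T,\epsilon,\lambda)}$, comparing against the fixed competitor $\bm{\omega}^{*}$:
\begin{align*}
  -\frac{1}{T}\ln\mathcal{L}^{(T,\epsilon)}\big(\bm{\omega}^{(T,\epsilon,\lambda)}\big)
  + \lambda\big\|\bm{\omega}^{(T,\epsilon,\lambda)}\big\|_1
  \le -\frac{1}{T}\ln\mathcal{L}^{(T,\epsilon)}(\bm{\omega}^{*}) + \lambda\|\bm{\omega}^{*}\|_1 \,.
\end{align*}
Passing to the limit along the subsequence and using \eqref{lambda-t-limit-is-zero}, both regularization terms vanish (the sequence being bounded), the right-hand likelihood term tends to $F(\bm{\omega}^{*})$, and the left-hand term tends to $F(\bar{\bm{\omega}})$ by the previous paragraph; this yields $F(\bar{\bm{\omega}})\le F(\bm{\omega}^{*})$.

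Finally I would show that $\bm{\omega}^{*}$ is the unique minimizer of $F$. Using $a_i^*(x)=a_i^{(0)}(x;\bm{\omega}^{*})$ from Assumption~\ref{assump-task2-omega-true},
\begin{align*}
  F(\bm{\omega}) - F(\bm{\omega}^{*})
  = \sum_{x\in\mathbb{X}}\sum_{i=1}^K\Big[a_i^{(0)}(x;\bm{\omega}) - a_i^*(x)
  - a_i^*(x)\ln\frac{a_i^{(0)}(x;\bm{\omega})}{a_i^*(x)}\Big]\pi(x)\ge 0\,,
\end{align*}
where each summand with $a_i^*(x)>0$ is the nonnegative generalized Kullback--Leibler term $u-v-v\ln(u/v)$ (cf.\ \eqref{d-kl-phi-p}) and each summand with $a_i^*(x)=0$ equals $a_i^{(0)}(x;\bm{\omega})\ge 0$, with equality throughout if and only if $a_i^{(0)}(x;\bm{\omega})=a_i^*(x)$ for every $x\in\mathbb{X}$ and $1\le i\le K$. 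Hence $F(\bar{\bm{\omega}})\le F(\bm{\omega}^{*})$ forces $a_i^{(0)}(\cdot;\bar{\bm{\omega}})\equiv a_i^*$, and the uniqueness asserted in Assumption~\ref{assump-task2-omega-true} gives $\bar{\bm{\omega}}=\bm{\omega}^{*}$, which completes the argument. The main obstacle is the joint limit in the second paragraph: establishing convergence to $F$ uniformly enough to evaluate it at the \emph{moving} minimizer $\bm{\omega}^{(T,\epsilon,\lambda)}$ while $-\ln G_\epsilon$ threatens to blow up as $\epsilon\to 0$. This is precisely where the uniform lower bound \eqref{omega-lower-bound} and the elementary estimates for $G_\epsilon$ in Appendix~\ref{app-1} are essential.
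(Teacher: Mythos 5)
Your proposal is correct and follows essentially the same route as the paper's proof: boundedness and subsequence extraction via Lemma~\ref{bounded-log-likelihood-imply-bounded-inf-sup}, passage to the limit of the rescaled negative log-likelihood using the martingale/ergodic limit lemmas of Appendix~\ref{app-2} together with $G_\epsilon\to G_0$, comparison of the minimizer against the competitor $\bm{\omega}^{*}$ with the $\lambda$-terms vanishing, and identification of $\bm{\omega}^{*}$ as the unique minimizer through the nonnegativity of the (generalized) Kullback--Leibler terms and Assumption~\ref{assump-task2-omega-true}. The only cosmetic difference is that the paper subtracts the reference likelihood $\ln\mathcal{L}^{(T),*}$ so the limit appears directly as a KL divergence and settles the evaluation at the moving minimizer with a one-sided $\limsup$ inequality, whereas you work with the unsubtracted functional $F$ and claim full convergence there; both are handled by the same lower bound \eqref{omega-lower-bound}.
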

\begin{theorem}
  Suppose that Assumptions~\ref{assump-finite-space}, \ref{assump-ergodicity}, \ref{assump-task2-omega-true}, and \ref{assump-boundedness} hold. 
  Let $\mathcal{F}$ be the $N\times N$ matrix whose entries are given in
  \eqref{covariance-c-varphi} and let $\bm{\omega}^{(T,\epsilon,\lambda)}$ be the
  minimizer of the problem \eqref{opt-problem-1}. 
  Further assume that the following conditions are met.
  \begin{enumerate}[wide]
\item
  The parameters $\lambda = \lambda(T)$, $\epsilon=\epsilon(T)$ in \eqref{opt-problem-1} satisfy 
  \begin{align}
    \lim_{T\rightarrow +\infty}  \sqrt{T}\lambda(T) = 0\,, \quad \epsilon(T) =
    \mathcal{O}(T^{-\alpha})\,, \quad \mbox{as}~T\rightarrow +\infty\,,
    \label{sqrt-t-times-lambda-t-limit-is-zero}
  \end{align}
      for some $\alpha > 0$.
    \item
      There exists $c>0$, such that for all $x\in \mathbb{X}$ and $1 \le i \le
      K$ satisfying $a^*_i(x) = 0$, we have either $\varphi_j(x) = 0$ for all
      $j \in \mathcal{I}_i$, or $\sum_{j \in
      \mathcal{I}_i}\omega_{j}^{*}\varphi_j(x)\le -c < 0$.
  \end{enumerate}
  Then, as $T \rightarrow +\infty$, $\sqrt{T}
  \big(\bm{\omega}^{(T, \epsilon, \lambda)} - \bm{\omega}^{*}\big)$ converges in
  distribution to a Gaussian random variable with mean zero and covariance
  matrix $\mathcal{F}^{-1}$. 
  \label{thm-asymptotic-normality-lambda}
\end{theorem}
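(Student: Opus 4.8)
The plan is to treat $\bm{\omega}^{(T,\epsilon,\lambda)}$ as a penalized $Z$-estimator and to mimic the reasoning behind the asymptotic normality of Learning task~1 (Proposition~\ref{prop-asymptotic-normality}), using the rate conditions \eqref{sqrt-t-times-lambda-t-limit-is-zero} to show that neither the $l^1$ penalty nor the smoothing $G_\epsilon$ affects the limiting law. Theorem~\ref{thm-omega-limit-general-lambda} already supplies consistency, $\bm{\omega}^{(T,\epsilon,\lambda)}\to\bm{\omega}^*$ a.s., and Lemma~\ref{bounded-log-likelihood-imply-bounded-inf-sup} supplies boundedness; these let me localize near $\bm{\omega}^*$. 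I would start from the optimality condition \eqref{euler-lagrange-lambda-vector}, which yields a vector $\bm v\in\partial|\bm{\omega}^{(T,\epsilon,\lambda)}|$ with $\|\bm v\|_\infty\le1$ satisfying $\mathcal{M}^{(T,\epsilon)}(\bm{\omega}^{(T,\epsilon,\lambda)})+T\lambda\,\bm v=0$. A component-wise mean-value (integral-remainder) expansion of the score $\mathcal{M}^{(T,\epsilon)}$ about $\bm{\omega}^*$, with the Hessian $H^{(T,\epsilon)}$ from \eqref{exp-2nd-derivative-r-eps} evaluated at an intermediate point $\bar{\bm{\omega}}$ on the segment from $\bm{\omega}^{(T,\epsilon,\lambda)}$ to $\bm{\omega}^*$, gives after dividing by $\sqrt T$
\begin{align*}
  \Big[\tfrac{1}{T}H^{(T,\epsilon)}(\bar{\bm{\omega}})\Big]\,\sqrt{T}\big(\bm{\omega}^{(T,\epsilon,\lambda)}-\bm{\omega}^*\big)
  = -\tfrac{1}{\sqrt T}\,\mathcal{M}^{(T,\epsilon)}(\bm{\omega}^*) - \sqrt{T}\lambda\,\bm v\,.
\end{align*}
The penalty term is harmless, since $\sqrt{T}\lambda\to0$ by \eqref{sqrt-t-times-lambda-t-limit-is-zero} and $\|\bm v\|_\infty\le1$, so that $\sqrt{T}\lambda\,\bm v\to0$. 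It then remains to analyze the score and the Hessian.

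For the score I would use the martingale decomposition \eqref{exp-1st-derivative-r-eps} at $\bm{\omega}=\bm{\omega}^*$. Because $R_i$ has intensity $a_i^*$, its jumps occur only at states with $a_i^*>0$, where $\sum_{j'\in\mathcal{I}_i}\omega^*_{j'}\varphi_{j'}=a_i^*>0$ by Assumption~\ref{assump-task2-omega-true}; there $(\ln G_\epsilon)'(\cdot)\to1/a_i^*$ as $\epsilon\to0$ with an error exponentially small in $1/\epsilon$. Hence, after division by $\sqrt T$, the martingale part of \eqref{exp-1st-derivative-r-eps} differs from $-\tfrac{1}{\sqrt T}\int_0^T(\varphi_j/a_i^*)\,d\widetilde R_i$ (the score \eqref{exp-derivatives-special} of Learning task~1) by a martingale whose scaled predictable quadratic variation is $O(e^{-c/\epsilon})\to0$, and therefore converges to $\mathcal{N}(\bm0,\mathcal F)$ by the martingale limit lemma of Appendix~\ref{app-2} (its scaled quadratic variation tending to $\mathcal F$ in \eqref{covariance-c-varphi}). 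The drift (bias) part $\int_0^T G_\epsilon'(\cdot)\varphi_j\,[1-a_i^*/G_\epsilon(\cdot)]\,ds$ is exactly where condition~(2) enters: at states with $a_i^*>0$ the bracket equals $(G_\epsilon(a_i^*)-a_i^*)/G_\epsilon(a_i^*)=O(\epsilon e^{-a_i^*/\epsilon})$, while at states with $a_i^*=0$ either $\varphi_j=0$ or $\sum_{j\in\mathcal I_i}\omega^*_j\varphi_j\le-c<0$ so that $G_\epsilon'(\cdot)=O(e^{-c/\epsilon})$. In every case the integrand is exponentially small in $1/\epsilon$, so the drift is $O(\sqrt T\,e^{-c/\epsilon})$ after scaling, which tends to zero because $\epsilon=\mathcal O(T^{-\alpha})$. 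Thus $\tfrac{1}{\sqrt T}\mathcal{M}^{(T,\epsilon)}(\bm{\omega}^*)\Rightarrow\mathcal N(\bm 0,\mathcal F)$.

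For the Hessian I would show $\tfrac1T H^{(T,\epsilon)}(\bar{\bm{\omega}})\to\mathcal F$ a.s. Using the first line of \eqref{exp-2nd-derivative-r-eps}, the $ds$-integral of $G_\epsilon''(\cdot)$ vanishes, since $G_\epsilon''\to0$ away from zero and, by condition~(2) together with Assumption~\ref{assump-task2-omega-true}, the argument $\sum_{k\in\mathcal I_i}\bar\omega_k\varphi_k$ stays bounded away from zero on all relevant states; meanwhile $\tfrac1T\int_0^T-(\ln G_\epsilon)''(\cdot)\varphi_j\varphi_{j'}\,dR_i$ converges to $\mathcal F_{j,j'}$ by the law-of-large-numbers lemma of Appendix~\ref{app-2}, because $-(\ln G_\epsilon)''(z)\to1/z^2$ for $z>0$ and $dR_i$ only weights states with $a_i^*>0$. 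Finiteness of $\mathbb X$ (Assumption~\ref{assump-finite-space}) and the consistency $\bar{\bm{\omega}}\to\bm{\omega}^*$ let me replace the random, $\epsilon$-dependent integrand by its pointwise limit up to a uniformly small error, reducing matters to the fixed-integrand LLN. Combining the score and Hessian limits by Slutsky's theorem then gives $\sqrt T(\bm{\omega}^{(T,\epsilon,\lambda)}-\bm{\omega}^*)\Rightarrow\mathcal F^{-1}\mathcal N(\bm0,\mathcal F)=\mathcal N(\bm0,\mathcal F^{-1})$, as claimed.

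I expect the main obstacle to be the simultaneous control of the two limits $T\to\infty$ and $\epsilon\to0$ inside the score and the Hessian: one must verify that the smoothing bias introduced by $G_\epsilon$ decays faster than $T^{-1/2}$ and that the Hessian, assembled from the smoothed nonlinearity and evaluated at a random intermediate point, still converges to the unsmoothed information $\mathcal F$. The exponential tail of $G_\epsilon-G_0$, the polynomial rate $\epsilon=\mathcal O(T^{-\alpha})$, and the boundary separation in condition~(2) are precisely what make this coupling work, and establishing the uniformity needed so that the Appendix~\ref{app-2} limit lemmas apply to the $T$-dependent integrands is the most delicate part of the argument.
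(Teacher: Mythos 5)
Your proposal follows essentially the same route as the paper's own proof: the subgradient optimality condition \eqref{euler-lagrange-lambda-vector} combined with an integral-form Taylor expansion of the score about $\bm{\omega}^{*}$, the penalty term killed by $\sqrt{T}\lambda(T)\to 0$, the $G_\epsilon$-induced bias and the $G_\epsilon''$ contributions controlled via the exponential estimates of Lemmas~\ref{lemma-g-eps} and~\ref{lemma-for-log-geps} together with condition~(2) and $\epsilon=\mathcal{O}(T^{-\alpha})$, the martingale CLT of Lemma~\ref{lemma-integral-ri-ctl} for the score, the LLN of Lemma~\ref{lemma-integral-ri-limit} for the Hessian, and Slutsky to conclude. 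The only (cosmetic) difference is that you compare the score's martingale part to the task-1 score via a vanishing-quadratic-variation argument, whereas the paper applies Lemma~\ref{lemma-integral-ri-ctl} directly to the $T$-dependent integrands; both are correct and you correctly identify where each hypothesis enters.
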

\appendix
\section{Pseudocode of FISTA with backtracking} \label{app-0}
We summarize the main algo\-rithmic steps of FISTA with backtracking~\cite{fista2009} for the optimization problem  
\begin{align}
  \min_{\bm{x} \in \mathbb{R}^N} F(\bm{x}) = \min_{\bm{x} \in \mathbb{R}^N}
  \Big(f(\bm{x}) + \lambda \sum_{j=1}^{N} \frac{|x_j|}{c_j}\Big)
  \label{app-0-opt-problem}
\end{align}
in Algorithm~\ref{fista-algo}, where $\lambda>0$ and $c_j > 0$. The optimization problems
\eqref{opt-problem-1-sub} and \eqref{opt-problem-1-rescaled} are in the form of \eqref{app-0-opt-problem}, with
$f$ being the (negative) logarithmic likelihood function. We refer the readers
to the original paper~\cite{fista2009}, where FISTA is developed for
optimization problems which are more general than \eqref{app-0-opt-problem}.
\begin{algorithm}[H]
  \caption{FISTA with backtracking for $\min_{\bm{x}}F(\bm{x}) =
  \min_{\bm{x}}\big(f(\bm{x}) + \lambda \sum_{j} \frac{|x_j|}{c_j}\big)$\label{fista-algo}}
  \begin{algorithmic}[1]
    \Function {$\mathcal{T}_\alpha$}{$x$} \Comment{shrinkage operator}
       \State
	  \textbf{return} $\max(|x|-\alpha, 0) \cdot \sgn(x)$
       \EndFunction
       \Statex
       \Function {$Q_L$}{$\bm{x}, \bm{y}$} \Comment{quadratic approximation of $F(\bm{x})$ at $\bm{y}$}
       \State
\textbf{return} $f(\bm{y}) + \langle \bm{x} - \bm{y}, \nabla f(\bm{y})\rangle + \frac{L}{2}\|\bm{x} -
       \bm{y}\|^2 + \lambda \sum_{j=1}^N \frac{|x_j|}{c_j}$
       \EndFunction
\Statex
    \Function{$p_L$}{$\bm{y}$} \Comment{$p_L(\bm{y}) = \argmin\limits_{\bm{x}\in \mathbb{R}^N}\big\{Q_L(\bm{x}, \bm{y})\big\}$}
     \For{$j\gets 1$ to $N$}
       \State $z_j =
       \mathcal{T}_{\lambda/(Lc_j)}\big(y_j -\frac{1}{L}\frac{\partial f}{\partial x_j}(\bm{y})\big)$
       \EndFor
       \State $\bm{z} = (z_1, z_2, \dots, z_N)^\top$
       \State \textbf{return} $\bm{z}$
    \EndFunction
\Statex
    \Procedure{FISTA}{}
    \State Choose $L_0 > 0, \eta > 1$, and $\bm{x}_{0}\in \mathbb{R}^N$. Set $\bm{y}_1 = \bm{x}_0$, $t_1 = 1$, $k=0$.
    \While{not converged}  
    \State $k\gets k + 1$.
    \State Find the smallest nonnegative integer $i_k$, such that with
    $\bar{L} = \eta^{i_k}L_{k-1}$,
	$$F(p_{\bar{L}}(\bm{y}_k)) \le Q_{\bar{L}}(p_{\bar{L}}(\bm{y}_k), \bm{y}_k)\,.$$
	\State Set $L_k = \eta^{i_k}L_{k-1}$, and compute
	\begin{align*}
	  \bm{x}_k = p_{L_k}(\bm{y_k}),\,~  t_{k+1} = \frac{1}{2}\Big(1 +
	  \sqrt{1 + 4t_k^2}\Big),\,~ 
	  \bm{y}_{k+1} =  \bm{x}_k + \Big(\frac{t_k -
	  1}{t_{k+1}}\Big)(\bm{x}_k - \bm{x}_{k-1})\,.
	\end{align*}
      \EndWhile
    \EndProcedure
  \end{algorithmic}
\end{algorithm}

\section{Properties of the function $G_\epsilon$}
\label{app-1}
We now summarize some asymptotic properties of the function $G_\epsilon$ in \eqref{g-eps}.
Given $\epsilon>0$, recall that
  $G_\epsilon(x) = \epsilon\ln \big(1 + e^{x/\epsilon}\big)$, for all $ x \in
  \mathbb{R}$\,, whose first and second derivatives are
  \begingroup
 \small
\setlength\abovedisplayskip{6pt}
\setlength\belowdisplayskip{6pt}
\begin{align}
  G_\epsilon'(x) = \frac{e^{x/\epsilon}}{1+e^{x/\epsilon}}\,,\quad
  G_\epsilon''(x) = \frac{1}{\epsilon}
  \frac{e^{x/\epsilon}}{(1+e^{x/\epsilon})^2}\,,
  \label{g-1st-2rd}
\end{align}
\endgroup
respectively. The following lemma can be easily proved and therefore its proof is omitted.

\begin{lemma}
  Given $\epsilon>0$, we have the following estimates.
  \begin{enumerate}[wide]
    \item $\max(x,0) < G_\epsilon(x) \le \max(x,0) + \epsilon \ln 2$, $\quad\forall\,x \in \mathbb{R}$.
    \item $1 - e^{-x/\epsilon}< G_\epsilon'(x) < 1$, if $x \ge 0$, and ~$0<G_\epsilon'(x) < e^{x/\epsilon}$, if $x < 0$.
\item $0 < G''_\epsilon(x) < \frac{1}{\epsilon} e^{-|x|/\epsilon}\,, \quad \forall~x \in \mathbb{R}\,.$
  \end{enumerate}
  \label{lemma-g-eps}
\end{lemma}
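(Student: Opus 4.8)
The plan is to reduce all three estimates to one-line manipulations of the explicit formulas, exploiting that $G_\epsilon$ is a smoothing of $\max(x,0)$ and that $G_\epsilon'$ is, up to rescaling, the logistic function. The single observation that makes the sign cases disappear for the first estimate is the identity
\begin{equation*}
  G_\epsilon(x) = \max(x,0) + \epsilon\ln\!\big(1 + e^{-|x|/\epsilon}\big)\,,
\end{equation*}
which I would verify by factoring $e^{x/\epsilon}$ out of $1 + e^{x/\epsilon}$ when $x \ge 0$ and leaving the expression untouched when $x < 0$. Since $0 < e^{-|x|/\epsilon} \le 1$ for every $x$, the correction term obeys $0 < \epsilon\ln(1 + e^{-|x|/\epsilon}) \le \epsilon\ln 2$, and estimate~(1) follows at once.

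For estimate~(2), I would start from $G_\epsilon'(x) = e^{x/\epsilon}/(1 + e^{x/\epsilon})$ and rewrite it as $G_\epsilon'(x) = 1 - 1/(1 + e^{x/\epsilon})$. Both positivity and the upper bound $G_\epsilon'(x) < 1$ are immediate from $0 < 1/(1+e^{x/\epsilon}) < 1$. For $x \ge 0$ the lower bound $1 - e^{-x/\epsilon} < G_\epsilon'(x)$ reduces, after subtracting from $1$, to $1/(1+e^{x/\epsilon}) < e^{-x/\epsilon}$, i.e.\ $e^{x/\epsilon} < 1 + e^{x/\epsilon}$, which is trivial; for $x < 0$ the bound $G_\epsilon'(x) < e^{x/\epsilon}$ follows by discarding the positive summand in the denominator $1 + e^{x/\epsilon} > 1$.

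For estimate~(3), positivity of $G_\epsilon''(x) = \tfrac{1}{\epsilon}\,e^{x/\epsilon}/(1 + e^{x/\epsilon})^2$ is clear. Setting $u = e^{x/\epsilon} > 0$, the upper bound becomes $u/(1+u)^2 < e^{-|x|/\epsilon}$: when $x \ge 0$ the right-hand side equals $1/u$ and the claim is $u^2 < (1+u)^2$, while when $x < 0$ it equals $u$ and the claim is $(1+u)^2 > 1$; both are obvious since $u > 0$. Equivalently, writing $e^{x/\epsilon}/(1+e^{x/\epsilon})^2 = 1/\big(2\cosh(x/(2\epsilon))\big)^2$ and using $\big(2\cosh y\big)^2 = e^{2y} + 2 + e^{-2y} > e^{2|y|}$ with $y = x/(2\epsilon)$ gives the bound uniformly in one line.

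Since every step is a one-line inequality, there is no real obstacle; the only mild subtlety is organizing the two sign regimes, which the displayed identity and the substitution $u = e^{x/\epsilon}$ dispose of cleanly. This is consistent with the text's remark that the lemma \emph{can be easily proved}.
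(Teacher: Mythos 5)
Your proof is correct in every step. The paper itself omits the proof of this lemma entirely (``The following lemma can be easily proved and therefore its proof is omitted''), so there is no argument to compare against; your computations --- in particular the identity $G_\epsilon(x) = \max(x,0) + \epsilon\ln\big(1+e^{-|x|/\epsilon}\big)$, which handles both sign regimes of estimate (1) at once, and the substitution $u=e^{x/\epsilon}$ for estimates (2) and (3) --- all check out and supply exactly the elementary verification the authors left to the reader.
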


In particular, Lemma~\ref{lemma-g-eps} implies 
  $\lim\limits_{\epsilon \rightarrow 0+} G_\epsilon(x) = \max(x,0)$,
uniformly for $x \in \mathbb{R}$, and 
\begingroup
\small
\setlength\abovedisplayskip{6pt}
\setlength\belowdisplayskip{6pt}
\begin{align*}
  \lim_{\epsilon \rightarrow 0+}
  G_\epsilon'(x) = 
	\begin{cases}
	  1 \,, & x > 0\\
	  \frac{1}{2}\,, & x=0\\
	  0 \,, & x < 0
	\end{cases}
	\,,
	\qquad
  \lim_{\epsilon \rightarrow 0+}
  G_\epsilon''(x) = 
	\begin{cases}
	  0 \,, & x \neq 0\\
	  +\infty \,, & x = 0\,.
	\end{cases}
\end{align*}
\endgroup

We also need to study the function $\ln G_\epsilon(x) = \ln \big[\epsilon \ln(1+e^{x/\epsilon})\big]$,
 whose first and second derivatives are 
 \begingroup
 \small
\setlength\abovedisplayskip{6pt}
\setlength\belowdisplayskip{6pt}
\begin{align}
  \begin{split}
  (\ln G_\epsilon)'(x) &= \frac{e^{x/\epsilon}}{\epsilon (1 +
  e^{x/\epsilon})\ln(1 + e^{x/\epsilon})}\,,\\
  (\ln G_\epsilon)''(x) &= \frac{1}{\epsilon^2}
  \frac{e^{x/\epsilon}}{(1+e^{x/\epsilon})^2}\frac{1}{\ln(1+e^{x/\epsilon})}
  \bigg[1 - \frac{e^{x/\epsilon}}{\ln(1+e^{x/\epsilon})}\bigg]\,.
  \end{split}
  \label{log-g-eps-derivatives}
\end{align}
\endgroup
\begin{lemma}
  Given $\epsilon>0$, we have the following estimates.
  \begin{enumerate}[wide]
    \item
      For all $x>0$, it holds that
      \begingroup
      \small
      \begin{align}
	\begin{split}
	  & \ln x < \ln G_\epsilon(x) < \ln x + \frac{\epsilon}{x}
	  e^{-x/\epsilon},\quad \frac{1}{(1+e^{-x/\epsilon})(x + \epsilon\ln 2)} < (\ln G_\epsilon)'(x) < \frac{1}{x},\\
	  \mbox{and}\quad&	 -\frac{1}{x^2}
	  < (\ln G_\epsilon)''(x) < \frac{e^{-x/\epsilon}}{\epsilon x}
      -\frac{1}{(1+e^{-x/\epsilon})^2(x + \epsilon\ln 2)^2}\,.
    \end{split}
	\label{lemma-for-log-geps-eqn1}
      \end{align}
      \endgroup
    \item
      For $x = 0$, we have
      \begingroup
      \small
      \begin{align*}
	\ln G_\epsilon(0) = \ln \epsilon + \ln\ln 2\,, \quad  
	  (\ln G_\epsilon)'(0) = \frac{1}{(2\ln 2)\epsilon}\,, \quad
	   (\ln G_\epsilon)''(0) = 
	  \frac{1}{4\ln 2}\Big(1-\frac{1}{\ln 2}\Big) \frac{1}{\epsilon^2} \,.
      \end{align*}
      \endgroup
    \item
      For all $x<0$, we have
      \begingroup
      \small
      \begin{align*}
	  \ln G_\epsilon(x) < \ln \epsilon + \frac{x}{\epsilon}\,,\quad (\ln G_\epsilon)'(x) > \frac{1}{2\epsilon}\,, 
	  \quad
-\frac{2}{\epsilon^2} e^{x/\epsilon} < (\ln G_\epsilon)''(x) < 0\,.
      \end{align*}
      \endgroup
  \end{enumerate}
  \label{lemma-for-log-geps}
\end{lemma}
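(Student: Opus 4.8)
The plan is to derive every estimate from the closed-form derivatives in \eqref{g-1st-2rd} and \eqref{log-g-eps-derivatives}, together with the two-sided bounds already collected in Lemma~\ref{lemma-g-eps} and a handful of elementary inequalities for $\ln(1+u)$. Part (2), the values at $x=0$, needs no work beyond substituting $x=0$ (so $e^{x/\epsilon}=1$ and $\ln(1+e^{x/\epsilon})=\ln 2$) into \eqref{log-g-eps-derivatives}. For parts (1) and (3) the central observation is the identity $G_\epsilon(x)=x+\epsilon\ln(1+e^{-x/\epsilon})$, obtained by factoring $e^{x/\epsilon}$ out of $1+e^{x/\epsilon}$; combined with $G_\epsilon'(x)=(1+e^{-x/\epsilon})^{-1}$ from \eqref{g-1st-2rd}, it reduces all the bounds to monotonicity facts about $\ln(1+u)$.

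For the bounds on $\ln G_\epsilon$ itself I would use this identity directly: for $x>0$, writing $\ln G_\epsilon(x)=\ln x+\ln\bigl(1+\epsilon x^{-1}\ln(1+e^{-x/\epsilon})\bigr)$ and applying $\ln(1+t)<t$ twice yields the upper bound $\ln x+\epsilon x^{-1}e^{-x/\epsilon}$, while $G_\epsilon(x)>x$ from Lemma~\ref{lemma-g-eps}(1) gives the lower bound $\ln x$; for $x<0$, the bound $\ln(1+e^{x/\epsilon})<e^{x/\epsilon}$ gives $G_\epsilon(x)<\epsilon e^{x/\epsilon}$, hence $\ln G_\epsilon(x)<\ln\epsilon+x/\epsilon$. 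The first-derivative estimates follow from $(\ln G_\epsilon)'=G_\epsilon'/G_\epsilon$: for $x>0$ the upper bound uses $G_\epsilon'<1$ and $G_\epsilon>x$, and the lower bound uses the exact value $G_\epsilon'=(1+e^{-x/\epsilon})^{-1}$ together with $G_\epsilon<x+\epsilon\ln 2$ (Lemma~\ref{lemma-g-eps}(1)); for $x<0$ the claim $(\ln G_\epsilon)'>(2\epsilon)^{-1}$ is immediate from \eqref{log-g-eps-derivatives} once one notes $(1+u)\ln(1+u)<2u$ for $u=e^{x/\epsilon}\in(0,1)$, which itself follows from $\ln(1+u)<u$ and $1+u<2$.

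The second-derivative bounds I would handle through the decomposition $(\ln G_\epsilon)''=G_\epsilon''/G_\epsilon-[(\ln G_\epsilon)']^2$. The sign $(\ln G_\epsilon)''<0$ everywhere, which covers the upper bounds in (1) and (3), comes from the bracket $1-e^{x/\epsilon}/\ln(1+e^{x/\epsilon})$ in \eqref{log-g-eps-derivatives} being negative, since $\ln(1+u)<u$. For $x>0$ the lower bound $-x^{-2}$ uses $G_\epsilon''>0$ and the already-proved $(\ln G_\epsilon)'<x^{-1}$, while the refined upper bound combines $G_\epsilon''<\epsilon^{-1}e^{-x/\epsilon}$, $G_\epsilon>x$, and the lower bound on $(\ln G_\epsilon)'$ established in the previous paragraph.

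The main obstacle is the lower bound $(\ln G_\epsilon)''(x)>-2\epsilon^{-2}e^{x/\epsilon}$ for $x<0$. The naive estimate $(\ln G_\epsilon)''>-[(\ln G_\epsilon)']^2$ is useless here, because $(\ln G_\epsilon)'(x)\to\epsilon^{-1}$ as $x\to-\infty$ whereas the target bound tends to $0$, so one must work with the exact formula. Writing $u=e^{x/\epsilon}\in(0,1)$, \eqref{log-g-eps-derivatives} gives $(\ln G_\epsilon)''(x)=-\epsilon^{-2}\,\frac{u}{(1+u)^2}\,\frac{u-\ln(1+u)}{[\ln(1+u)]^2}$, so the claim reduces to $\frac{u-\ln(1+u)}{[\ln(1+u)]^2}<2$, the harmless factor $(1+u)^{-2}<1$ being discarded. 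This I would prove with the sharp two-sided estimate $u-\ln(1+u)<u^2/2$ (from $\ln(1+u)>u-u^2/2$) and $\ln(1+u)>u/2$ on $(0,1)$, which together give the ratio $<(u^2/2)/(u^2/4)=2$. Lining up the constants to match the stated bound exactly is the only delicate point, and it is precisely where the restriction $u\in(0,1)$, i.e.\ $x<0$, enters.
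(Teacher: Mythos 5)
Your proposal is correct and, for the only estimates the paper actually proves (the two-sided bounds on $(\ln G_\epsilon)''$), it follows essentially the same route: your decomposition $(\ln G_\epsilon)''=G_\epsilon''/G_\epsilon-[(\ln G_\epsilon)']^2$ is exactly the paper's two-term splitting of \eqref{log-g-eps-derivatives}, and your treatment of the $x<0$ lower bound via $u-\ln(1+u)<u^2/2$ and $\ln(1+u)>u/2$ on $(0,1)$ is the paper's argument written in the variable $u=e^{x/\epsilon}$. The remaining estimates are omitted in the paper as elementary, and the arguments you supply for them (the identity $G_\epsilon(x)=x+\epsilon\ln(1+e^{-x/\epsilon})$, $\ln(1+t)<t$, and the bounds from Lemma~\ref{lemma-g-eps}) are all valid.
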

\begin{proof}
  We will only prove the inequalities concerning $(\ln G_\epsilon)''$. 
  \begin{enumerate}[wide]
    \item
  When $x>0$, using \eqref{log-g-eps-derivatives} and the fact
  $\epsilon\ln(1+e^{x/\epsilon})>x$, we have 
	   $(\ln G_\epsilon)''(x) > -\frac{1}{x^2}$.
  For the upper bound, using Lemma~\ref{lemma-g-eps}, we have
      \begingroup
      \small
  \begin{align*}
    & \frac{1}{\epsilon^2}
    \frac{e^{x/\epsilon}}{(1+e^{x/\epsilon})^2}\frac{1}{\ln(1+e^{x/\epsilon})}
    =
\frac{1}{\epsilon^2}
    \frac{e^{-x/\epsilon}}{(1+e^{-x/\epsilon})^2}\frac{1}{\ln(1+e^{x/\epsilon})}
    < \frac{e^{-x/\epsilon}}{\epsilon x}\,,\\
    &
    -\frac{e^{2x/\epsilon}}{(1+e^{x/\epsilon})^2}\frac{1}{\big[\epsilon\ln(1+e^{x/\epsilon})\big]^2}
    \le 
    -\frac{1}{(1+e^{-x/\epsilon})^2}\frac{1}{(x + \epsilon \ln 2)^2}\,,
  \end{align*}
      \endgroup
  and therefore \eqref{lemma-for-log-geps-eqn1} is obtained.
    \item
  When $x<0$, using the fact that $-\frac{u^2}{2} < \ln(1+u) - u < 0$, for all
  $ u>0$, we have 
      $\ln(1+e^{x/\epsilon}) > e^{x/\epsilon} - \frac{1}{2}e^{2x/\epsilon} > \frac{1}{2} e^{x/\epsilon}.$
  Therefore,
      \begingroup
      \small
  \begin{align*}
  (\ln G_\epsilon)''(x) &= 
    \frac{e^{x/\epsilon}}{\epsilon^2 (1+e^{x/\epsilon})^2}\frac{1}{\big(\ln(1+e^{x/\epsilon})\big)^2}
  \Big(\ln(1+e^{x/\epsilon}) - e^{x/\epsilon}\Big) \\
    &>
    -\frac{e^{3x/\epsilon}}{2\epsilon^2(1+e^{x/\epsilon})^2}\frac{1}{\frac{1}{4}e^{2x/\epsilon}}
    > -\frac{2}{\epsilon^2} e^{x/\epsilon}\,.
  \end{align*}
      \endgroup
  \end{enumerate}
\end{proof}

Summarizing the estimates in Lemma~\ref{lemma-for-log-geps}, we can conclude that
\begingroup
\small
\begin{align*}
  \lim_{\epsilon \rightarrow 0+} \ln G_\epsilon(x) = &
  \begin{cases}
    \ln x\,, & x > 0 \\
    -\infty\,, & x \le 0
  \end{cases}
  \,,
  \quad 
  \lim_{\epsilon \rightarrow 0+} (\ln G_\epsilon)'(x) = 
  \begin{cases}
    \frac{1}{x}\,, & x > 0 \\
    +\infty\,, & x \le 0
  \end{cases}
  \,,
   \\
  \text{and}\quad
  \lim_{\epsilon \rightarrow 0+} (\ln G_\epsilon)''(x) = &
  \begin{cases}
    -\frac{1}{x^2}\,, & x > 0 \\
    -\infty\,, & x = 0 \\
    0\,, & x < 0
  \end{cases}
  \,.
\end{align*}
\endgroup

\section{Two limit lemmas on integrations with respect to counting processes}
\label{app-2}
In this section, we summarize two useful results pertaining to integrations with respect to the
processes $R_i$, $\widetilde{R}_i$ in \eqref{r0-ri} and
\eqref{compensate-r0-ri}, respectively. These results play an important
role in the asymptotic analysis in Section~\ref{sec-asymptotics}.
The first result is a type of law of large numbers (LLN) for Poisson processes.
\begin{lemma}
  Suppose that Assumptions~\ref{assump-finite-space}-\ref{assump-ergodicity} hold. 
  Functions $f^{(T)} : \mathbb{X} \rightarrow \mathbb{R}$ 
  satisfy $\lim\limits_{T\rightarrow +\infty} f^{(T)}(x) = f(x)$, $\forall~x\in \mathbb{X}$. For each $1 \le i \le K$, we have
  \begingroup
    \small
\setlength\abovedisplayskip{6pt}
\setlength\belowdisplayskip{6pt}
  \begin{align}
    &\lim_{T\rightarrow +\infty} \frac{1}{T} \int_0^T
    f^{(T)}\big(X(s)\big)\,dR_i(s) = \sum_{x \in \mathbb{X}}
    f(x)\,a_i^*(x) \pi(x) \,, \quad a.s.
    \label{lemma-int-ri-claim-1} \\
     \mbox{and} \quad&
    \lim_{T\rightarrow +\infty} \frac{1}{T} \int_0^T
    f^{(T)}\big(X(s)\big)\,d\widetilde{R}_i(s) = 0\,, \quad a.s.
    \label{lemma-int-ri-claim-2}
  \end{align}
  \endgroup
  \label{lemma-integral-ri-limit}
\end{lemma}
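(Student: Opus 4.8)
The plan is to reduce both displayed limits to two ingredients — the ergodic theorem for the ordinary (time) integrals and a strong law of large numbers for the martingales $\widetilde{R}_i$ — and only afterwards to remove the $T$-dependence of $f^{(T)}$, exploiting the finiteness of $\mathbb{X}$. Throughout I write $\overline{g}:=\sum_{x\in\mathbb{X}}g(x)\,\pi(x)$ for the stationary average of a function $g\colon\mathbb{X}\to\mathbb{R}$. Since $\mathbb{X}$ is finite by Assumption~\ref{assump-finite-space}, every such $g$ is bounded, and the pointwise convergence $f^{(T)}\to f$ is in fact uniform, so $\delta_T:=\max_{x\in\mathbb{X}}|f^{(T)}(x)-f(x)|\to 0$; I will use this repeatedly.

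First I would treat the building block for the Lebesgue integral. By the ergodicity of $X$ (Assumption~\ref{assump-ergodicity}) together with finiteness of $\mathbb{X}$, the ergodic theorem gives $\frac{1}{T}\int_0^T g(X(s))\,ds\to\overline{g}$ a.s. for every fixed $g$. To admit a moving integrand I estimate $\big|\frac{1}{T}\int_0^T (f^{(T)}-f)(X(s))\,a_i^*(X(s))\,ds\big|\le\delta_T\max_{x}a_i^*(x)\to 0$, so that $\frac{1}{T}\int_0^T f^{(T)}(X(s))\,a_i^*(X(s))\,ds\to\overline{f a_i^*}=\sum_{x}f(x)a_i^*(x)\pi(x)$ a.s.

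The core step is the martingale limit for a fixed $f$. I would show $\frac{1}{T}N_i(T)\to 0$ a.s., where $N_i(T)=\int_0^T f(X(s))\,d\widetilde{R}_i(s)$; this is a square-integrable martingale with predictable quadratic variation $\langle N_i\rangle(T)=\int_0^T f(X(s))^2\,a_i^*(X(s))\,ds$, so by the ergodic step $\langle N_i\rangle(T)/T\to\sigma_i^2:=\overline{f^2 a_i^*}$. If $\sigma_i^2>0$ then $\langle N_i\rangle(T)\to\infty$ and the strong law for martingales yields $N_i(T)/\langle N_i\rangle(T)\to 0$, whence $N_i(T)/T=\big(N_i(T)/\langle N_i\rangle(T)\big)\big(\langle N_i\rangle(T)/T\big)\to 0$. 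If $\sigma_i^2=0$, then since $\pi(x)>0$ for all $x$ (Assumption~\ref{assump-ergodicity}) we must have $f(x)=0$ whenever $a_i^*(x)>0$; because $R_i$ jumps only at states with $a_i^*>0$, both $\int f\,dR_i$ and $\int f a_i^*\,ds$ vanish identically, so $N_i\equiv 0$. Combining with the ergodic step proves \eqref{lemma-int-ri-claim-1} for a fixed $f$ through the decomposition $\frac{1}{T}\int_0^T f(X(s))\,dR_i(s)=\frac{1}{T}N_i(T)+\frac{1}{T}\int_0^T f(X(s))\,a_i^*(X(s))\,ds\to\overline{f a_i^*}$; taking $f\equiv 1$ shows in particular that $R_i(T)/T\to\overline{a_i^*}$ is a.s. finite.

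Finally I would reinstate the $T$-dependence and obtain both displayed limits. Since $R_i$ is nondecreasing, $\big|\frac{1}{T}\int_0^T (f^{(T)}-f)(X(s))\,dR_i(s)\big|\le\delta_T\,R_i(T)/T\to 0$ using the a.s. boundedness of $R_i(T)/T$ just established, so $\frac{1}{T}\int_0^T f^{(T)}(X(s))\,dR_i(s)$ has the same limit $\overline{f a_i^*}$ as in the fixed case, which is \eqref{lemma-int-ri-claim-1}. For \eqref{lemma-int-ri-claim-2} I write $\frac{1}{T}\int_0^T f^{(T)}(X(s))\,d\widetilde{R}_i=\frac{1}{T}\int_0^T f^{(T)}(X(s))\,dR_i-\frac{1}{T}\int_0^T f^{(T)}(X(s))\,a_i^*(X(s))\,ds$; the first term tends to $\overline{f a_i^*}$ by \eqref{lemma-int-ri-claim-1} and the second to the same value by the ergodic step (applied to the moving integrand $f^{(T)}a_i^*$), so the difference tends to $0$. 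I expect the main obstacle to be precisely the martingale step: a moving integrand $f^{(T)}$ cannot be controlled inside a stochastic integral against the non–finite-variation martingale $\widetilde{R}_i$, so the argument must first dispatch a fixed $f$ via the quadratic-variation strong law and only then pass to $f^{(T)}$, where the difference $f^{(T)}-f$ admits the clean bound against the increasing process $R_i$.
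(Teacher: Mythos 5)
Your proposal is correct, but it follows a genuinely different route from the paper's proof. The paper proves \eqref{lemma-int-ri-claim-1} first and directly: it writes $\frac{1}{T}\int_0^T f(X(s))\,dR_i(s)$ as a finite sum over states $x\in\mathbb{X}$ of $f(x)$ times the normalized count of activations of $\mathcal{C}_i$ at $x$, identifies the limit of the ratio of occupation time at $x$ to the number of activations at $x$ as $1/a_i^*(x)$ via the law of large numbers for i.i.d.\ exponential waiting times, combines this with Birkhoff's ergodic theorem for the occupation times, and then obtains the martingale statement \eqref{lemma-int-ri-claim-2} by subtraction. You invert this order: your core step is the martingale claim for a fixed integrand, proved via the predictable quadratic variation $\langle N_i\rangle(T)=\int_0^T f^2(X(s))\,a_i^*(X(s))\,ds$ and the strong law of large numbers for square-integrable martingales, with a separate and correct treatment of the degenerate case $\sigma_i^2=0$ (using $\pi(x)>0$ and the fact that $R_i$ jumps only at states where $a_i^*>0$); \eqref{lemma-int-ri-claim-1} is then recovered by adding back the compensator. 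The reductions from the moving integrand $f^{(T)}$ to a fixed $f$ are essentially the same in both arguments --- a uniform bound $\delta_T$ multiplied by $R_i(T)/T$ --- except that you control $R_i(T)/T$ by the fixed-$f$ case with $f\equiv 1$, whereas the paper invokes the LLN for unit Poisson processes \eqref{lln-poisson}. Your approach trades the paper's elementary renewal-type argument for standard martingale machinery: it requires citing the martingale strong law (which the paper never uses), but it avoids the explicit interpretation of $\int_0^T\mathbf{1}_x(X(s))\,dR_i(s)$ and the exponential-waiting-time LLN. Both arguments are sound, and your closing remark correctly identifies why the fixed-$f$ martingale step must come before the passage to $f^{(T)}$.
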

\begin{proof}
  Since $\mathbb{X}$ is a finite set (Assumption~\ref{assump-finite-space}), the convergence of $f^{(T)}$ to $f$ is
  in fact uniform on $\mathbb{X}$.
  Using the LLN of Poisson processes in \eqref{lln-poisson} and the uniform
  convergence of $f^{(T)}$, we have 
  \begingroup
  \small
  \begin{align*}
    &\Big|\lim_{T\rightarrow +\infty} \frac{1}{T} \int_0^T
    f^{(T)}\big(X(s)\big)\,dR_i(s) 
    - \lim_{T\rightarrow +\infty} \frac{1}{T} \int_0^T f\big(X(s)\big)\,dR_i(s)
    \Big|\\
     \le & \lim_{T\rightarrow +\infty} \bigg[\frac{R_i(T)}{T} \sup_{x \in
    \mathbb{X}} \big|f^{(T)} - f\big|\bigg] = 0 \,.
  \end{align*}
  \endgroup
  Therefore, it is sufficient to prove \eqref{lemma-int-ri-claim-1} for the
  case $f^{(T)}\equiv f$. 
  Note that we have
  \begingroup
  \small
  \begin{align}
    \frac{1}{T} \int_0^T f(X(s))\, dR_i(s)  = \sum_{x \in \mathbb{X}} 
    \bigg[
f(x)
    \frac{1}{T} \int_0^T \mathbf{1}_{x}(X(s))\,dR_i(s)\bigg]\,,
  \label{lemma-int-ri-eqn1}
  \end{align}
  \endgroup
  where $\mathbf{1}_{x}$ denotes the indicator function at state $x$. 
  For each $x \in \mathbb{X}$, $\int_0^T
  \mathbf{1}_x\big(X(s)\big)\,dR_i(s)$ can be interpreted as the total number of times that the $i$th channel $\mathcal{C}_i$ becomes active within time $[0, T]$
  when the state of the system is $x$. 
Similarly, $\int_0^T \mathbf{1}_x\big(X(s)\big)\,ds$ is the total time that the system
  spends at state $x$ within time $[0, T]$. Since the waiting times
  at state $x$ before the channel $\mathcal{C}_i$ becomes activated are independent and follow exponential distributions with mean
  value $\big(a_i^*(x)\big)^{-1}$, the LLN of exponential distributions implies that
  \begingroup
  \small
  \begin{align}
    \lim_{T\rightarrow +\infty}  \frac{\int_0^T
    \mathbf{1}_x\big(X(s)\big)\,ds}{\int_0^T \mathbf{1}_x\big(X(s)\big)\,dR_i(s)} =
    \frac{1}{a_i^*(x)}\,, \quad a.s.
  \label{lemma-int-ri-eqn2}
  \end{align}
  \endgroup
  Since the system is ergodic (Assumption~\ref{assump-ergodicity}), Birkhoff's ergodic theorem implies 
  \begingroup
  \small
  \begin{align}
    \lim_{T\rightarrow +\infty}  \frac{1}{T}\int_0^T \mathbf{1}_x\big(X(s)\big)\,ds = \pi(x)\,, \quad a.s.
  \label{lemma-int-ri-eqn3}
  \end{align}
  \endgroup
  Combining \eqref{lemma-int-ri-eqn1}--\eqref{lemma-int-ri-eqn3}, we obtain
$\lim\limits_{T\rightarrow +\infty} \frac{1}{T} \int_0^T f\big(X(s)\big)\, dR_i(s)  =
    \sum\limits_{x \in \mathbb{X}} f(x)
    a_i^*(x)\,\pi(x)$\,, a.s.
The conclusion \eqref{lemma-int-ri-claim-2} follows as a consequence, using the definition of $\widetilde{R}_i$ in
\eqref{compensate-r0-ri} and the ergodicity of the system. 
\end{proof}
The second result is a corollary of the martingale central limit theorem~\cite[Theorem~7.1.4]{ethier1986markov}.
\begin{lemma}
  Suppose that Assumptions~\ref{assump-finite-space}-\ref{assump-ergodicity} hold.  For each $1 \le j \le N$, 
  functions $f_{j}, f^{(T)}_{j} \colon \mathbb{X} \rightarrow \mathbb{R}$ satisfy
   $\lim\limits_{T\rightarrow +\infty} f^{(T)}_{j}(x) = f_{j}(x)$, $\forall~x \in \mathbb{X}$. Let 
  $\mathcal{W}^{(T)}(u) \in \mathbb{R}^N$ denote the $N$-dimensional process whose components are 
    $\mathcal{W}^{(T)}_{j}(u) =  \frac{1}{\sqrt{T}} \int_0^{Tu} 
    f^{(T)}_{j}\big(X(s)\big)\, d\widetilde{R}_i(s)$\,, where $u \ge 0$, $1 \le j \le N$, and the index $i$ satisfies $j \in \mathcal{I}_i$, $1 \le i \le K$. Moreover,
  $\mathcal{F}$ is the $N\times N$ matrix whose entries are given by 
  \begingroup
  \small
  \begin{align}
    \mathcal{F}_{j,j'} = 
      \begin{cases}
	~\sum\limits_{x \in \mathbb{X}} f_{j}(x) f_{j'}(x)
	a_i^*(x)\, \pi(x)\,, &  \mbox{if}~~j, j' \in \mathcal{I}_i
	\,,~\mbox{for some index}~~1 \le i \le K\\
	~0\,, &   \mbox{otherwise}\,, \\
      \end{cases}
      \label{covariance-c-u1}
  \end{align}
  \endgroup
  for $1 \le j, j' \le N$. We define the matrix-valued (linear) process $\mathcal{A}(u)=u\,\mathcal{F}$, $u \ge 0$.

As~$T\rightarrow \infty$, $\mathcal{W}^{(T)}$ converges in distribution to $\mathcal{W}$, where
$\mathcal{W}$ is an $N$-dimensional process with independent Gaussian increments whose quadratic variation process is $\mathcal{A}$. In particular, 
$\mathcal{W}^{(T)}(1)$ converges in distribution to a Gaussian random variable
  whose mean is zero and whose covariance matrix is $\mathcal{F}$ in \eqref{covariance-c-u1}.
  \label{lemma-integral-ri-ctl}
\end{lemma}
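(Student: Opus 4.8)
The plan is to realize each $\mathcal{W}^{(T)}$ as a martingale in the rescaled time variable $u$ and to invoke the martingale functional central limit theorem \cite[Theorem~7.1.4]{ethier1986markov}. The starting point is that, by \eqref{compensate-r0-ri}, each $\widetilde{R}_i$ is the compensated counting process associated with $R_i$ and hence a square-integrable martingale with respect to the natural filtration $(\mathcal{G}_t)_{t\ge 0}$ of $X$. Since $s\mapsto f^{(T)}_j(X(s))$ is adapted and piecewise constant, the map $t\mapsto \int_0^{t} f^{(T)}_j(X(s))\,d\widetilde{R}_i(s)$ is again a martingale, so $u\mapsto \mathcal{W}^{(T)}_j(u)$ is a martingale with respect to the time-changed filtration $\mathcal{G}_{Tu}$. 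Thus $\mathcal{W}^{(T)}=(\mathcal{W}^{(T)}_1,\dots,\mathcal{W}^{(T)}_N)^\top$ is an $N$-dimensional martingale, to which the abstract theorem applies; one needs only to check the convergence of its quadratic covariation and the negligibility of its jumps.

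First I would compute the quadratic covariation matrix. Because the unit Poisson processes $\mathcal{P}_i$ in \eqref{r0-ri} are independent, the martingales $\widetilde{R}_i$ and $\widetilde{R}_{i'}$ for $i\neq i'$ never jump simultaneously, so $[\widetilde{R}_i,\widetilde{R}_{i'}]\equiv 0$; this forces the off-diagonal blocks of the covariation to vanish, matching the ``otherwise'' case of \eqref{covariance-c-u1}. For $j,j'\in\mathcal{I}_i$ the two integrals are taken against the same $\widetilde{R}_i$, whose jumps have unit size, so the optional covariation is $[\mathcal{W}^{(T)}_j,\mathcal{W}^{(T)}_{j'}]_u=\frac{1}{T}\int_0^{Tu} f^{(T)}_j(X(s))\,f^{(T)}_{j'}(X(s))\,dR_i(s)$. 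Applying claim~\eqref{lemma-int-ri-claim-1} of Lemma~\ref{lemma-integral-ri-limit} with $T$ replaced by $Tu$ (and absorbing the $T$-dependence of the integrands through the uniform-convergence estimate $\sup_x|f^{(T)}_jf^{(T)}_{j'}-f_jf_{j'}|\to 0$ on the finite set $\mathbb{X}$, exactly as in the proof of that lemma), I obtain $[\mathcal{W}^{(T)}_j,\mathcal{W}^{(T)}_{j'}]_u\to u\sum_{x\in\mathbb{X}}f_j(x)f_{j'}(x)a_i^*(x)\pi(x)=u\,\mathcal{F}_{j,j'}$ almost surely. The predictable covariation $\langle\mathcal{W}^{(T)}_j,\mathcal{W}^{(T)}_{j'}\rangle_u=\frac{1}{T}\int_0^{Tu}f^{(T)}_jf^{(T)}_{j'}a_i^*\,ds$ converges to the same limit by Birkhoff's ergodic theorem, so whichever bracket the abstract theorem requires, the covariation converges to the continuous deterministic limit $\mathcal{A}(u)=u\mathcal{F}$.

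Next I would verify the jump condition of \cite[Theorem~7.1.4]{ethier1986markov}. Whenever $R_i$ jumps (which, by the independence noted above, happens one channel at a time), the component $\mathcal{W}^{(T)}_j$ makes a jump of size $\frac{1}{\sqrt{T}}f^{(T)}_j(X(s))$; since $\mathbb{X}$ is finite (Assumption~\ref{assump-finite-space}) and $f^{(T)}_j\to f_j$ pointwise, the integrands are uniformly bounded for large $T$, and hence the maximal jump of $\mathcal{W}^{(T)}$ over any bounded interval $[0,u_0]$ is deterministically $O(T^{-1/2})\to 0$. This immediately yields the required negligible-jump (Lindeberg-type) hypothesis. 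With the covariation converging to the continuous, nondecreasing, deterministic $\mathcal{A}(u)=u\mathcal{F}$ and the jumps vanishing, the theorem gives $\mathcal{W}^{(T)}\Rightarrow\mathcal{W}$, where $\mathcal{W}$ is a continuous $N$-dimensional Gaussian martingale with independent increments and quadratic variation $\mathcal{A}$; evaluating the (a.s.\ continuous) limit at $u=1$ and using the continuous mapping theorem gives the stated convergence of $\mathcal{W}^{(T)}(1)$ to the Gaussian law $\mathcal{N}(\bm{0},\mathcal{F})$.

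The main obstacle is bookkeeping rather than conceptual: carefully matching the hypotheses of the abstract martingale CLT to the present setting. The two delicate points are (i) accommodating the $T$-dependence of the integrands $f^{(T)}_j$, which must be absorbed into the covariation limit via the uniform-convergence estimate above—or, if one prefers a cleaner reduction, by a preliminary Doob-maximal-inequality argument showing that replacing $f^{(T)}_j$ by $f_j$ alters $\mathcal{W}^{(T)}$ by a term whose angle bracket is bounded by $u_0\,\sup_x|f^{(T)}_j-f_j|^2\max_x a_i^*(x)\to 0$, hence negligible in $L^2$ uniformly on $[0,u_0]$; and (ii) confirming the block-diagonal structure of $\mathcal{F}$, which rests entirely on the almost-sure absence of common jumps of independent Poisson processes. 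Once these are in place, the identification of the Gaussian limit and its covariance $\mathcal{F}$ is immediate from the form of $\mathcal{A}$.
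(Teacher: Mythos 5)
Your proposal is correct and follows essentially the same route as the paper: both realize $\mathcal{W}^{(T)}$ as a martingale and invoke the martingale central limit theorem of Ethier--Kurtz (Theorem~7.1.4), verifying convergence of the covariation to $\mathcal{A}(u)=u\,\mathcal{F}$ and negligibility of the jumps, which are $O(T^{-1/2})$ by boundedness of the $f^{(T)}_j$ on the finite state space. The only cosmetic difference is that you work primarily with the optional bracket $\frac{1}{T}\int_0^{Tu} f^{(T)}_j f^{(T)}_{j'}\,dR_i$ and Lemma~\ref{lemma-integral-ri-limit}, whereas the paper uses the predictable bracket $\frac{1}{T}\int_0^{Tu} f^{(T)}_j f^{(T)}_{j'}\,a_i^*(X(s))\,ds$ (identified as the compensator via It\^o's formula and handled by ergodicity); since you note that both brackets converge to the same limit, either variant satisfies the hypotheses of the abstract theorem.
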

\begin{proof}
  For each $T>0$, we define the matrix-valued process $\mathcal{A}^{(T)}(u)$, $u\ge 0$, whose entries are given by 
  \begingroup
  \small
\begin{align}
  \mathcal{A}_{j,j'}^{(T)}(u) = 
  \begin{cases} 
  ~\frac{1}{T} \int_0^{Tu} 
    f^{(T)}_{j}\big(X(s)\big)
    f^{(T)}_{j'}\big(X(s)\big)\,a_i^*(X(s))\, ds\,, &
    \mbox{if}~~j, j' \in \mathcal{I}_i \,,~\mbox{for some}~1 \le i \le K\\[5pt]
    ~0\,, & \mbox{otherwise}\,,
  \end{cases}
\label{a-i-j-jprime}
\end{align}
\endgroup
  for $1 \le j,\,j' \le N$. 
Let us verify the conditions required by the martingale central limit theorem~\cite[Theorem~7.1.4]{ethier1986markov}. 

Firstly, from \eqref{r0-ri} and \eqref{compensate-r0-ri}, applying Ito's formula, we know the process 
\begingroup
  \small
  $$\mathcal{W}_{j}^{(T)}(u) \mathcal{W}_{j'}^{(T)}(u) -
  \mathcal{A}_{j,j'}^{(T)}(u)\,,\quad u \ge 0,$$
  \endgroup
is a martingale. Using the expression \eqref{a-i-j-jprime} and the ergodicity of the system, we have
\begingroup
  \small
\begin{align*}
  \lim_{T\rightarrow +\infty} 
  \mathcal{A}_{j,j'}^{(T)}(u) = u\, \mathcal{F}_{j,j'} =
  \mathcal{A}_{j,j'}(u)\,,\quad a.s.
\end{align*}
\endgroup
  Furthermore, since $f^{(T)}_{j}$ converge to $f_j$ and
  $\mathbb{X}$ is a finite set (Assumption~\ref{assump-finite-space}), it is clear that $f^{(T)}_j$ are uniformly bounded. 
  This implies that 
  \begingroup
\setlength\abovedisplayskip{6pt}
\setlength\belowdisplayskip{6pt}
  \small
\begin{align*}
  & \lim_{T\rightarrow +\infty} \mathbf{E}\bigg[\sup_{u \le u_0} \Big|\mathcal{W}_{j}^{(T)}(u)
  - \mathcal{W}_{j}^{(T)}(u-)\Big|^2\bigg]
  \le \lim_{T\rightarrow
  +\infty}\frac{1}{T} \mathbf{E}\bigg[\sup_{u \le Tu_0}
  \Big|f_{j}^{(T)}(X(u))\Big|^2 \bigg] =0\,,\quad\forall~u_0 \ge 0.
\end{align*}
\endgroup

  Secondly, because the processes $\mathcal{A}_{j,j'}^{(T)}(u)$ in
  \eqref{a-i-j-jprime} have continuous paths, the limit
  \begingroup
  \small
\setlength\abovedisplayskip{6pt}
\setlength\belowdisplayskip{6pt}
\begin{align*}
\lim_{T\rightarrow +\infty} \mathbf{E}\bigg[\sup_{u \le u_0} 
  \Big|\mathcal{A}_{j,j'}^{(T)}(u) - 
\mathcal{A}_{j,j'}^{(T)}(u-)\Big|\bigg] = 0\,,\qquad 1 \le j, j' \le N
\end{align*}
\endgroup
holds trivially. Therefore,  we can apply the martingale central limit theorem~\cite[Theorem~7.1.4]{ethier1986markov} and 
the conclusion follows readily. 
\end{proof}

\section{Proofs of results in Section~\ref{sec-asymptotics}}
\label{app-3}
In this section, we prove the results presented in Section~\ref{sec-asymptotics}.

We start with the results in Subsection~\ref{subsec-infinite-many-data}.
\begin{proof} [Proof of Proposition~\ref{prop-for-ni-is-1}]
  As already pointed out in Subsection~\ref{subsec-learn-rate},  the Euler--Lagrange equation
  \eqref{opt-problem-euler-lagrange} can be explicitly solved when $N_i=1$ and the solution
  is given in \eqref{opt-omega-ni1}.  Using the representations in
  \eqref{r0-ri} and \eqref{m-mi-t}, we can rewrite \eqref{opt-omega-ni1} as 
  \begingroup
  \small
\setlength\abovedisplayskip{6pt}
\setlength\belowdisplayskip{6pt}
\begin{align*}
  \omega_{j}^{(T)} = \frac{M_i}{\sum\limits_{l=0}^{M} t_l \, \varphi_{j}(y_l)}
  = \frac{\mathcal{P}_i\Big(\omega_{j}^{*}\int_0^T \varphi_{j}\big(X(s)\big)\,ds\Big)}{
    \int_0^T \varphi_{j}\big(X(s)\big)\,ds}\,.
\end{align*}
 \endgroup 
Applying \eqref{lln-poisson} together with \eqref{ni1-divergence-assump},
  we conclude that $\lim\limits_{T\rightarrow +\infty} \omega^{(T)}_{j} =
  \omega^{*}_{j}$, almost surely. 
\end{proof}
    \begin{proof}[Proof of Lemma~\ref{lemma-uniqueness-omega-t}]
From \eqref{opt-problem-y-t-f} and \eqref{log-likelihood-ith-omega}, it is not difficult to see that, with probability one, there is at least one minimizer for large enough $T$. 
We show the uniqueness by contradiction. Suppose that, with positive probability, the solution of \eqref{opt-problem-y-t}--\eqref{opt-problem-y-t-f}
      is not unique for an increasing subsequence $T_k$, where
      $\lim\limits_{k\rightarrow +\infty}T_k=+\infty$. According to
      Proposition~\ref{prop-uniqueness-conditions}, we can find an index $i$, $1 \le i
\le K$, such that the column vectors $\Phi_{i,l}$, $1\le l\le N_i$, of the matrix $\Phi_i$ in \eqref{mat-phi-i} are linearly
dependent for $T_k$, where $k=1,2,\cdots$. Let us order the states in $\mathbb{X}$ such that
      $\mathbb{X} = \{x_1, x_2, \dots, x_m\}$, where $m =|\mathbb{X}|$ is positive.
     The ergodicity of the system (Assumption~\ref{assump-ergodicity}) implies
      that with probability one the states $x_1, x_2, \dots, x_m$ will
    be visited by the system within some large finite time.
      Since there is a positive probability that the column vectors $\Phi_{i,l}$
      are linearly dependent for all $T_k$ where $\lim\limits_{k\rightarrow +\infty} T_k=+\infty$, 
      we can find a nonzero vector $\bm{\eta} \in (\eta_1, \eta_2, \dots, \eta_{N_i})^\top \in \mathbb{R}^{N_i}$, such that 
      $\sum\limits_{k=1}^{N_i} \eta_k \varphi_{j_k}(x_l) = 0$\,, $\forall\,1 \le l \le m$, where $\mathcal{I}_i = \big\{j_1, j_2, \dots, j_{N_i}\big\}$.
 This contradicts Assumption~\ref{assump-task1-uniqueness}.
    \end{proof}
\begin{proof}[Proof of Proposition~\ref{prop-omega-limit-general}]
      \begin{enumerate}[wide]
  \item
    Under Assumption~\ref{assump-ergodicity}, using expressions \eqref{exp-log-likelihood-r}, \eqref{exp-derivatives-special},
	  and applying Lemma~\ref{lemma-integral-ri-limit} in
	  Appendix~\ref{app-2}, we can compute 
	  \begingroup
	  \small
    \begin{align} \label{prop-omega-limit-general-proof-eqn1}
	&\lim_{T\rightarrow +\infty} \frac{\ln\mathcal{L}^{(T)}(\bm{\omega}) -
	\ln\mathcal{L}^{(T)}\big(\bm{\omega}^{*}\big)}{T} \nonumber \\
	=& \lim_{T\rightarrow +\infty}\bigg[\frac{1}{T} \sum_{i=1}^K\int_0^T
	\ln
	\frac{a_i\big(X(s)\,;\,\bm{\omega}\big)}{a_i\big(X(s)\,;\,\bm{\omega}^{*}\big)}\,d\widetilde{R}_i + 
\frac{1}{T} \sum_{i=1}^K\int_0^T \bigg(a_i\big(X(s)\,;\,\bm{\omega}^{*}\big) -
	a_i\big(X(s)\,;\,\bm{\omega}\big)\bigg)\,ds \nonumber \\
	& + \sum_{i=1}^K\int_0^T a_i\big(X(s)\,;\,\bm{\omega}^{*}\big)
	\ln
	\frac{a_i\big(X(s)\,;\,\bm{\omega}\big)}{a_i\big(X(s)\,;\,\bm{\omega}^{*}\big)}
	 \,ds\bigg] \nonumber \\
	=& \sum_{x\in\mathbb{X}} \sum_{i=1}^K\bigg[
	    a_i\big(x\,;\,\bm{\omega}^{*}\big) - a_i(x\,;\,\bm{\omega}) +
	    a_i\big(x\,;\,\bm{\omega}^{*}\big)
	\ln\frac{a_i(x\,;\,\bm{\omega})}{a_i\big(x\,;\,\bm{\omega}^{*}\big)}
	\bigg]\, \pi(x)\, \\
	    =& \sum_{x\in\mathbb{X}} \bigg[
	    a(x\,;\,\bm{\omega}^{*}) - a(x\,;\,\bm{\omega}) + 
	    \sum_{i=1}^K
	    \bigg(a_i\big(x\,;\,\bm{\omega}^{*}\big)
	\ln
	\frac{a_i(x\,;\,\bm{\omega})}{a_i\big(x\,;\,\bm{\omega}^{*}\big)}
	\bigg)\bigg]\, \pi(x)\, \nonumber \\
	    =& -\sum_{x\in\mathbb{X}}
	\bigg[D_{KL}\Big(\psi\big(\cdot\,;\,x,\bm{\omega}^{*}\big)\,\Big|\,
	\psi\big(\cdot\,;\,x,\bm{\omega}\big)\Big) +
	D_{KL}\Big(p\big(\cdot\,;\,x,\bm{\omega}^{*}\big)\,\Big|\,
	p\big(\cdot\,;\,x,\bm{\omega}\big)\Big)\bigg]
	a\big(x\,;\,\bm{\omega}^{*}\big)\,\pi(x) \,, \nonumber
    \end{align}
	 \endgroup 
    where we have used \eqref{d-kl-phi-p} in the last equality.
    Therefore, the first conclusion is obtained.
  \item
  Firstly, let us show that the sequence $\big(\omega^{(T)}_{j}\big)_{T>0}$ is almost surely bounded for each $1 \le j \le N$. 
  From the Euler--Lagrange equation \eqref{opt-problem-euler-lagrange}, we can
  obtain the relation
	  \begingroup
	  \small
\setlength\abovedisplayskip{6pt}
\setlength\belowdisplayskip{6pt}
    \begin{align*}
      \sum_{j\in \mathcal{I}_i} \omega^{(T)}_{j}\bigg( \sum_{l=0}^{M}
      t_l\,\varphi_{j}(y_l)\bigg) = M_i\,, \qquad \forall~1 \le i \le K\,,
    \end{align*}
	\endgroup 
    which implies 
	  \begingroup 
	  \small
\setlength\abovedisplayskip{6pt}
\setlength\belowdisplayskip{6pt}
    \begin{align}
      0 \le \omega^{(T)}_{j} \le& \frac{M_i}{\sum\limits_{l=0}^{M} t_l\, \varphi_{j}(y_l)}
      = \frac{\frac{1}{T} \int_0^T 1\,dR_i(s)}{\frac{1}{T}\int_0^T
      \varphi_{j}\big(X(s)\big)\,ds}\,,
      \label{omega-upper-bound-ineq}
    \end{align}
	 \endgroup 
	  where $i$, $1 \le i \le K$, is the index such that $j \in \mathcal{I}_i$.
    Note that both the numerator and the denominator on the right-hand side of
    \eqref{omega-upper-bound-ineq} converge, as consequences of
    Lemma~\ref{lemma-integral-ri-limit} in Appendix~\ref{app-2} and the ergodicity of the system
	  (Assumption~\ref{assump-ergodicity}), respectively. Taking the limit $T \rightarrow +\infty$ in \eqref{omega-upper-bound-ineq} and using
\eqref{ai-omega}, we have 
\begingroup
\small
\setlength\abovedisplayskip{6pt}
\setlength\belowdisplayskip{6pt}
    \begin{align*}
      \limsup_{T\rightarrow +\infty}\, \omega^{(T)}_{j} \le
      \lim_{T\rightarrow +\infty}\frac{\frac{1}{T}
\int_0^T 1\,dR_i(s)}{\frac{1}{T} \int_0^T \varphi_{j}\big(X(s)\big)\,ds}
= \frac{\sum\limits_{x\in\mathbb{X}}\bigg[\sum\limits_{j'\in
\mathcal{I}_i}\omega^{*}_{j'}\,\varphi_{j'}(x)\bigg]\,\pi(x)}{\sum\limits_{x\in\mathbb{X}}
      \varphi_{j}(x)\,\pi(x)}\,, \quad a.s.,
    \end{align*}
    \endgroup
    which implies that the sequence $\big(\omega_{j}^{(T)}\big)_{T>0}$ is almost surely bounded.

    Secondly, from \eqref{exp-1st-derivative-r} we know that the minimizer $\bm{\omega}^{(T)}$ satisfies the identity
    \begingroup
    \small
    \begin{align*}
      \int_0^T \frac{\varphi_{j}\big(X(s)\big)}{
	a_i\big(X(s)\,;\,\bm{\omega}^{(T)}\big)}\,dR_i(s) = \int_0^T
      \varphi_{j}\big(X(s)\big)\,ds \,, \quad j \in \mathcal{I}_i\,.
    \end{align*}
    \endgroup
    In particular, for each state $x \in \mathbb{X}$, it implies
    \begingroup
    \small
    \begin{align*}
      \frac{\varphi_{j}(x)}{a_i\big(x\,;\,\bm{\omega}^{(T)}\big)}
      \int_0^T \mathbf{1}_x\big(X(s)\big) \,dR_i(s) = 
      \int_0^T \frac{\varphi_{j}\big(X(s)\big)\,\mathbf{1}_x\big(X(s)\big)}{
	a_i\big(X(s)\,;\,\bm{\omega}^{(T)}\big)}
      \,dR_i(s) \le \int_0^T
      \varphi_{j}\big(X(s)\big)\,ds\,,
    \end{align*}
    \endgroup
  where $\mathbf{1}_{x}$ denotes the indicator function at $x$. 
    Therefore, applying Lemma~\ref{lemma-integral-ri-limit} in
    Appendix~\ref{app-2} and using the ergodicity of the system,
    we have
    \begingroup 
    \small
    \begin{equation} \label{lower-bound-for-ai}
      \liminf_{T\rightarrow +\infty} a_i\big(x\,;\,\bm{\omega}^{(T)}\big) \ge
      \varphi_{j}(x)\lim_{T\rightarrow +\infty}\frac{
	\int_0^T \mathbf{1}_x\big(X(s)\big) dR_i(s)}{\int_0^T \varphi_{j}(X(s))\,ds} 
       = \frac{\varphi_{j}(x) \,\pi(x)}{\sum\limits_{x' \in \mathbb{X}} \varphi_{j}(x')\,\pi(x')}\,a_i\big(x\,;\,\bm{\omega}^{*}\big)\,.
    \end{equation}
    \endgroup
    Note that whenever there is an increment for the counting process $R_i(s)$
    when $X(s)=x$, we know $a_i(x\,;\,\bm{\omega}^{*})>0$ and we can find an index
    $j\in \mathcal{I}_i$ such that the lower bound in \eqref{lower-bound-for-ai} is positive.
    
    Finally, let $\bar{\bm{\omega}}$ be a limit point of $\bm{\omega}^{(T)}$ as
    $T\rightarrow +\infty$.  Using a similar derivation as in
    \eqref{prop-omega-limit-general-proof-eqn1} and taking the lower bound \eqref{lower-bound-for-ai} into
    account, we obtain 
    \begingroup
    \small
	  \begin{align}
	    &  \liminf_{T\rightarrow +\infty} \frac{\ln\mathcal{L}^{(T)}(\bm{\omega}^{(T)}) -
	\ln\mathcal{L}^{(T)}(\bm{\omega}^{*})}{T} \nonumber \\
	    \le& -\sum_{x\in\mathbb{X}}
	    \bigg[D_{KL}\Big(\psi\big(\cdot\,;\,x,\bm{\omega}^{*}\big)\,\Big|\,
	\psi\big(\cdot\,;\,x,\bar{\bm{\omega}}\big)\Big) +
	    D_{KL}\Big(p\big(\cdot\,;\,x,\bm{\omega}^{*}\big)\,\Big|\,
	p\big(\cdot\,;\,x,\bar{\bm{\omega}}\big)\Big)\bigg]a\big(x\,;\,\bm{\omega}^{*}\big)\,
	\pi(x) \label{prop-omega-t-and-bar-limit-general-in-proof} \\
	    \le& ~ 0 \nonumber\,.
	  \end{align}
	  \endgroup 
	  On the other hand, since $\bm{\omega}^{(T)}$ is the minimizer of \eqref{opt-problem-y-t}, we also have 
	  \begingroup
	  \small
	  \begin{align*}
		        \liminf_{T\rightarrow +\infty} \frac{\ln\mathcal{L}^{(T)}(\bm{\omega}^{(T)}) -
	\ln\mathcal{L}^{(T)}(\bm{\omega}^{*})}{T}  \ge 0\,.
	  \end{align*}
	  \endgroup
	  Therefore, the Kullback--Leibler divergences in
	  \eqref{prop-omega-t-and-bar-limit-general-in-proof} must be equal to zero
	  at each state $x$. The expressions \eqref{d-kl-phi-p} then imply
	    $a_i\big(x\,;\,\bar{\bm{\omega}}\big) =
	    a_i\big(x\,;\,\bm{\omega}^{*}\big)$, $\forall\, 1 \le i \le K$ and
	    $\forall\,x \in \mathbb{X}$.
	  Using \eqref{ai-omega} and Assumption \ref{assump-task1-uniqueness}, we conclude 
	  $\bar{\bm{\omega}} = \bm{\omega}^{*}$ and therefore
$\lim\limits_{T\rightarrow +\infty} \bm{\omega}^{(T)} = \bm{\omega}^{*}$.
    \end{enumerate}
\end{proof}
\begin{proof}[Proof of Proposition~\ref{prop-asymptotic-normality}]
  First of all, under Assumption~\ref{assump-task1-uniqueness}, it is
  straightforward to verify that the matrix $\mathcal{F}$ is positive definite and
  therefore invertible.
  Given $1 \le j \le N$, expanding the function $\mathcal{M}^{(T)}_{j}(\bm{\omega})$
  in \eqref{opt-problem-euler-lagrange}, we have 
  \begingroup
  \small
\setlength\abovedisplayskip{6pt}
\setlength\belowdisplayskip{6pt}
\begin{align}
  \begin{split}
  &\mathcal{M}^{(T)}_{j}\big(\bm{\omega}^{(T)}\big) -  
  \mathcal{M}^{(T)}_{j}\big(\bm{\omega}^{*}\big)  
  = \sum_{j'=1}^{N} \bigg[\int_0^1
  \frac{\partial\mathcal{M}^{(T)}_{j}}{\partial
  \omega_{j'}}\Big(\theta\bm{\omega}^{(T)} + (1-\theta) 
\bm{\omega}^{*}\Big) d\theta\bigg] \Big(\omega_{j'}^{(T)} -
  \omega_{j'}^{*}\Big)\,.
  \end{split}
  \label{taylor-expansion-m-j}
\end{align}
\endgroup
Since $\mathcal{M}^{(T)}_{j}\big(\bm{\omega}^{(T)}\big) = 0$, dividing both sides of the equality above by $\sqrt{T}$, using
  \eqref{exp-2nd-derivative-r} and \eqref{exp-derivatives-special}, we have 
  \begingroup
  \small
\begin{align}
\frac{1}{\sqrt{T}} \int_0^{T} 
\frac{\varphi_{j}\big(X(s)\big)}{a_i\big(X(s)\,;\,\bm{\omega}^{*}\big)}
d\widetilde{R}_i(s)
  = \sum_{j'=1}^{N} 
 \mathcal{B}_{j,j'}^{(T)}\, \Big[\sqrt{T} \big(\omega_{j'}^{(T)} -
 \omega_{j'}^{*}\big)\Big]\,,
 \label{w-talor-eqn}
\end{align}
\endgroup
  where $i$, $1 \le i \le K$, is the index such that $j \in \mathcal{I}_i$, and we have introduced 
  \begingroup
  \small
\begin{align*}
  \begin{split}
     \mathcal{B}_{j,j'}^{(T)} &=
\frac{1}{T}\int_0^1
  \frac{\partial\mathcal{M}^{(T)}_{j}}{\partial
  \omega_{j'}}\Big(\theta \bm{\omega}^{(T)} + (1-\theta)
    \bm{\omega}^{*}\Big)\, d\theta \\
    &=
\frac{1}{T}
\int_0^T 
\bigg[\int_0^1
\frac{\varphi_{j}(X(s))\,\varphi_{j'}(X(s))}{
  \big[a_i\big(X(s)\,;\,\theta\bm{\omega}^{(T)} + (1-\theta) 
    \bm{\omega}^{*}\big)\big]^2}\, d\theta\bigg]\, dR_i(s)\,,
\end{split}
\end{align*}
\endgroup
  if $j,j' \in \mathcal{I}_i$, for some index $i$, $1 \le i \le K$, and $\mathcal{B}_{j,j'}^{(T)}=0$, otherwise.

  Let $\mathcal{B}^{(T)}$ denote the $N\times N$ matrix whose entries
  are $B^{(T)}_{j,j'}$, and let $\mathcal{W}^{(T)}$ denote the $N$-dimensional vector whose component 
    $\mathcal{W}^{(T)}_{j}$ equals the left-hand side of \eqref{w-talor-eqn}.
    With these notations, \eqref{w-talor-eqn} can be written as 
    \begingroup
    \small
    \begin{align}
      \mathcal{W}^{(T)} = \mathcal{B}^{(T)} \Big[\sqrt{T}
      \big(\bm{\omega}^{(T)} - \bm{\omega}^{*}\big)\Big]\,.
      \label{w-talor-eqn-vector}
    \end{align}
    \endgroup
  Applying Lemma~\ref{lemma-integral-ri-ctl} in Appendix~\ref{app-2}, we know
  that, as $T\rightarrow +\infty$, the vector $\mathcal{W}^{(T)}$
   converges in distribution to a Gaussian random variable whose mean equals zero and whose covariance matrix is given by
  $\mathcal{F}$. At the same time, since $\lim\limits_{T\rightarrow +\infty}
  \bm{\omega}^{(T)} = \bm{\omega}^{*}$ almost
  surely according to Proposition~\ref{prop-omega-limit-general},
  Lemma~\ref{lemma-integral-ri-limit} in Appendix~\ref{app-2} implies
  $\lim\limits_{T\rightarrow +\infty} \mathcal{B}^{(T)}  = \mathcal{F} \,,a.s.$
Therefore, applying Slutsky's Theorem~\cite{ferguson1996course}, we can
conclude 
\begin{align*}
  \sqrt{T} \big(\bm{\omega}^{(T)} - \bm{\omega}^{*}\big)
 = (\mathcal{B}^{(T)})^{-1} \mathcal{W}^{(T)}
  \Longrightarrow \mathcal{Z} \in
  \mathcal{N}\big(0, \mathcal{F}^{-1}\big)\,, \quad \mbox{as } T\rightarrow +\infty\,.
\end{align*}
\end{proof}

We continue to prove the results in Subsection~\ref{subsec-asymptotic-sparse-learning}.
\begin{proof}[Proof of Lemma~\ref{bounded-log-likelihood-imply-bounded-inf-sup}]
Lemma~\ref{lemma-g-eps} in Appendix~\ref{app-1} implies that 
\begingroup
  \small
$$G_\epsilon\Big(\sum_{j\in \mathcal{I}_{i'}} \omega_{j}^{*}
  \varphi_{j}(x)\Big) \ge G_0 \Big(\sum_{j\in \mathcal{I}_{i'}}
  \omega_{j}^{*} \varphi_{j}(x)\Big)= a^{*}_{i'}(x), \quad \forall~x \in
  \mathbb{X},~1 \le i'\le K\,.$$
  \endgroup
  Since $\bm{\omega}^{(T,\epsilon,\lambda)}$ is the minimizer, we can derive
  \begingroup
  \small
  \begin{align*}
    & ~~~   -\frac{1}{T}\ln
    \mathcal{L}^{(T,\epsilon)}\big(\bm{\omega}^{(T,\epsilon,\lambda)}\big) +
    \lambda \|\bm{\omega}^{(T,\epsilon,\lambda)}\|_1\\\
    &\le -\frac{1}{T}\ln
    \mathcal{L}^{(T,\epsilon)}\big(\bm{\omega}^{*}\big) +
    \lambda \|\bm{\omega}^{*}\|_1\\\
    &= \sum_{i'=1}^K
    \bigg[
    - \frac{1}{T}
    \int_0^T \ln G_\epsilon\bigg(\sum_{j\in \mathcal{I}_{i'}}
    \omega_{j}^{*}\,\varphi_{j}\big(X(s)\big)\bigg) dR_{i'}(s)
    + \frac{1}{T} \int_0^T G_\epsilon\Big(\sum_{j\in \mathcal{I}_{i'}}
    \omega_{j}^{*} \varphi_{j}\big(X(s)\big)\Big)\,ds\bigg] + \lambda \|\bm{\omega}^{*}\|_1\\
    &\le \sum_{i'=1}^K
    \bigg[
    - \frac{1}{T}
    \int_0^T \ln a_{i'}^{*}(X(s))\,dR_{i'}(s)
    + \frac{1}{T} \int_0^T G_\epsilon\Big(\sum_{j\in \mathcal{I}_{i'}}
    \omega_{j}^{*} \varphi_{j}\big(X(s)\big)\Big)\,ds\bigg] + \lambda
  \|\bm{\omega}^{*}\|_1\,.
  \end{align*}
  \endgroup
  Taking the limit $T\rightarrow +\infty$, using the fact
  $\lim\limits_{\epsilon \rightarrow 0} G_\epsilon = G_0$, as well as Lemma~\ref{lemma-integral-ri-limit} in Appendix~\ref{app-2}, we obtain
  \begingroup
  \small
  \begin{align}
    \limsup_{T\rightarrow +\infty} -\frac{1}{T}\ln \mathcal{L}^{(T,\epsilon)}\big(\bm{\omega}^{(T,\epsilon,\lambda)}\big) <
    +\infty\,, \quad a.s.
    \label{likelihood-finite-limisup}
  \end{align}
  \endgroup
  Now we show \eqref{omega-lower-bound} by contradiction. Suppose it does not hold, applying Lemma~\ref{lemma-g-eps} in Appendix~\ref{app-1}, we can find an index $i$, $1 \le
      i \le K$, and a state $x \in \mathbb{X}$ with $a_i^{*}(x)>0$, such that by extracting a
      subsequence, which will be again denoted by $\bm{\omega}^{(T,\epsilon,\lambda)}$, we have either
      \begingroup
      \small
   \begin{align}
     \begin{split}
       \lim_{T\rightarrow +\infty} G_\epsilon\Big(\sum_{j' \in \mathcal{I}_i} \omega^{(T,\epsilon, \lambda)}_{j'}
      \varphi_{j'}(x)\Big) = 0\,,\quad ~\mbox{or}~ 
      \lim_{T\rightarrow +\infty} G_\epsilon\Big(\sum_{j' \in \mathcal{I}_i} \omega^{(T,\epsilon, \lambda)}_{j'}
      \varphi_{j'}(x)\Big) = +\infty\,.
     \end{split}
      \label{i-x-limit-zero-or-infty}
   \end{align}
\endgroup
      Using \eqref{likelihood-t-eps}, we can estimate 
      \begingroup
      \small
\begin{align}
    &-\frac{1}{T}\ln\mathcal{L}^{(T, \epsilon)}(\bm{\omega}^{(T,\epsilon,
    \lambda)}) \label{log-likelihood-min-lb} \\
    &=  
    \sum_{i'=1}^K
    \bigg[
    - \frac{1}{T}
    \int_0^T \ln G_\epsilon\bigg(\sum_{j\in \mathcal{I}_{i'}}
    \omega_{j}^{(T,\epsilon, \lambda)}\,\varphi_{j}\big(X(s)\big)\bigg)
    dR_{i'}(s)
    + \frac{1}{T}
    \int_0^T 
     G_\epsilon\Big(\sum_{j\in \mathcal{I}_{i'}} \omega_{j}^{(T,\epsilon, \lambda)}
  \varphi_{j}\big(X(s)\big)\Big)\,ds\bigg] \notag\\
    &= 
    \sum_{i'=1}^K
    \sum_{x' \in \mathbb{X}} 
    \bigg[
      -
\ln G_\epsilon\bigg(\sum_{j\in \mathcal{I}_{i'}}
    \omega_{j}^{(T,\epsilon, \lambda)}\,\varphi_{j}(x')\bigg)
    \frac{1}{T}\int_0^T\mathbf{1}_{x'}(X(s))
    \,dR_{i'}(s) \notag\\
    &\quad + G_\epsilon\bigg(\sum_{j\in \mathcal{I}_{i'}}
\omega_{j}^{(T,\epsilon, \lambda)}\,\varphi_{j}(x')\bigg)\,\frac{1}{T}\int_0^T \mathbf{1}_{x'}(X(s))\,ds\bigg]\notag \\
    &\ge 
    \sum_{x' \neq x} 
    \sum_{i'=1}^K
    \bigg[
      -
      \bigg(
    \frac{1}{T}\int_0^T\mathbf{1}_{x'}(X(s))
    \,dR_{i'}(s)\bigg)
    \ln \frac{
      \frac{1}{T}\int_0^T\mathbf{1}_{x'}(X(s)) \,dR_{i'}(s)
    }{
    \frac{1}{T}\int_0^T \mathbf{1}_{x'}(X(s))\,ds 
    }
    + \frac{1}{T}\int_0^T\mathbf{1}_{x'}(X(s))\, dR_{i'}(s)
\bigg]\notag \\
&+ \sum_{1 \le i'\le K,i'\neq i}
    \bigg[
      -
      \bigg(
    \frac{1}{T}\int_0^T\mathbf{1}_{x}(X(s))
    \,dR_{i'}(s)\bigg)
    \ln \frac{
      \frac{1}{T}\int_0^T\mathbf{1}_{x}(X(s)) \,dR_{i'}(s)
    }{
    \frac{1}{T}\int_0^T \mathbf{1}_{x}(X(s))\,ds 
    }
    + \frac{1}{T}\int_0^T\mathbf{1}_{x}(X(s))\, dR_{i'}(s)
\bigg]\notag\\
& + 
   \bigg[
G_\epsilon\bigg(\sum_{j\in \mathcal{I}_{i}}
\omega_{j}^{(T,\epsilon, \lambda)}\,\varphi_{j}(x)\bigg)\,\frac{1}{T}\int_0^T
\mathbf{1}_{x}(X(s))\,ds
      -
\ln G_\epsilon\bigg(\sum_{j\in \mathcal{I}_{i}}
    \omega_{j}^{(T,\epsilon, \lambda)}\,\varphi_{j}(x)\bigg)
    \frac{1}{T}\int_0^T\mathbf{1}_{x}(X(s))
    \,dR_{i}(s)
\bigg] \notag \\
&=: J_1 + J_2 + J_3\,. \notag 
    \end{align}
    \endgroup
    where we have used Lemma~\ref{fun-c1logx-plus-c2x} below, as well as the
    convention $0\ln 0 = 0$. 
    Since $a^*_i(x) > 0$, applying
    Lemma~\ref{lemma-integral-ri-limit} in Appendix~\ref{app-2}, we have 
    \begingroup
    \small
    \begin{align*}
      \lim_{T\rightarrow +\infty} \frac{1}{T} \int_0^T\mathbf{1}_{x}(X(s))
      \,dR_{i} = a^*_i(x)\,\pi(x) > 0\,, \quad a.s.
    \end{align*}
    \endgroup
    Therefore, Lemma~\ref{fun-c1logx-plus-c2x} below implies that
    $\lim\limits_{T\rightarrow +\infty} J_3 = +\infty$\,, almost surely in the both cases in \eqref{i-x-limit-zero-or-infty}.
    For the same reason, applying Lemma~\ref{lemma-integral-ri-limit} in
    Appendix~\ref{app-2}, we know that 
    \begingroup
    \small
   \begin{align*}
     \begin{split}
       \lim_{T\rightarrow +\infty} J_1 &= 
     \sum_{x' \neq x} \pi(x') \sum_{i'=1}^K
     \Big(-a^*_{i'}(x') 
     \ln a^*_{i'}(x')
     + a^*_{i'}(x') \Big) > -\infty\,,  \\
       \lim_{T\rightarrow +\infty} J_2 &= 
     \pi(x) \sum_{1 \le i'\le K, i'\neq i}
     \Big(-a^*_{i'}(x) 
     \ln a^*_{i'}(x)
     + a^*_{i'}(x) \Big) > -\infty\,.
       \end{split}
   \end{align*}
   \endgroup
   Taking the limit $T\rightarrow +\infty$ in \eqref{log-likelihood-min-lb}, we obtain
     $\limsup\limits_{T\rightarrow +\infty} -\frac{1}{T}\ln \mathcal{L}^{(T,
     \epsilon)}(\bm{\omega}^{(T,\epsilon, \lambda)}) = +\infty$, which contradicts \eqref{likelihood-finite-limisup}. Therefore, \eqref{omega-lower-bound} has been proved.
   The boundedness of the sequence $\bm{\omega}^{(T,\epsilon,
   \lambda)}$ follows directly from \eqref{omega-lower-bound} and  Assumption~\ref{assump-boundedness}.
\end{proof}
 The following elementary facts have been used in the proof above.
\begin{lemma}
  Consider the function $f(x) = -c_1 \ln x + c_2\,x$, where $c_1 \ge 0, c_2 > 0$ are two constants. We have
  \begin{enumerate}[wide]
    \item
      $f(x)$ is convex on $(0,+\infty)$. 
    \item 
      $f(x) \ge -c_1 \ln \frac{c_1}{c_2} + c_1, ~\forall x \in (0,+\infty)$,
      and $\lim\limits_{x \rightarrow +\infty} f(x) = +\infty$. 
    \item
      When $c_1>0$, then $\lim\limits_{x \rightarrow 0+} f(x) = +\infty$.
  \end{enumerate}
  \label{fun-c1logx-plus-c2x}
\end{lemma}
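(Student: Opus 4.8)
The plan is to dispatch the three claims by elementary single-variable calculus on $(0,+\infty)$, where $f$ is smooth, treating the degenerate case $c_1 = 0$ separately where it matters. For the convexity claim I would simply differentiate twice: one computes $f'(x) = -\frac{c_1}{x} + c_2$ and $f''(x) = \frac{c_1}{x^2}$. Since $c_1 \ge 0$, we have $f''(x) \ge 0$ for every $x > 0$, which gives convexity on $(0,+\infty)$; when $c_1 = 0$ the function is affine and hence still convex.

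For the lower bound together with the behavior as $x \to +\infty$, I would split into cases. If $c_1 = 0$, then $f(x) = c_2 x > 0$, the claimed bound $-c_1\ln\frac{c_1}{c_2} + c_1$ equals $0$ under the convention $0\ln 0 = 0$, so the inequality holds trivially, and $\lim_{x\to+\infty} c_2 x = +\infty$ because $c_2 > 0$. If $c_1 > 0$, I would locate the unique critical point by solving $f'(x) = 0$, which yields $x^{*} = \frac{c_1}{c_2} > 0$; by the convexity established in the first part, $x^{*}$ is the global minimizer, and evaluating $f(x^{*}) = -c_1\ln\frac{c_1}{c_2} + c_1$ reproduces exactly the claimed bound. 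For the limit at infinity I would note that the linear term dominates the logarithm, for instance by factoring $f(x) = c_2 x\bigl(1 - \frac{c_1}{c_2 x}\ln x\bigr) \to +\infty$ as $x \to +\infty$, since $\frac{\ln x}{x} \to 0$.

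For the last claim, when $c_1 > 0$ I would observe that as $x \to 0+$ the term $-c_1\ln x \to +\infty$ while $c_2 x \to 0$, so $f(x) \to +\infty$. Since each of these steps is routine, there is no genuine obstacle; the only point needing a little care is the bookkeeping in the degenerate case $c_1 = 0$, where one must invoke the convention $0\ln 0 = 0$ (already used in the calling proof of Lemma~\ref{bounded-log-likelihood-imply-bounded-inf-sup}) to make the stated lower bound meaningful.
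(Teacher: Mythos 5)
Your proof is correct; the paper itself gives no proof of this lemma, presenting it only as a collection of ``elementary facts,'' and the routine second-derivative/critical-point argument you supply is exactly the standard verification the authors implicitly rely on. Your extra care with the degenerate case $c_1=0$ via the convention $0\ln 0=0$ is consistent with how the lemma is invoked in the proof of Lemma~\ref{bounded-log-likelihood-imply-bounded-inf-sup}.
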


Finally, we briefly present the proofs of Theorem~\ref{thm-omega-limit-general-lambda} and Theorem~\ref{thm-asymptotic-normality-lambda}, since the argument is similar to the one in Proposition~\ref{prop-omega-limit-general} and Proposition~\ref{prop-asymptotic-normality}. 
\begin{proof}[Proof of Theorem~\ref{thm-omega-limit-general-lambda}]
 Lemma~\ref{bounded-log-likelihood-imply-bounded-inf-sup} implies that the
  sequence $\bm{\omega}^{(T, \epsilon, \lambda)}$, $T > 0$, is bounded.
Let $\bar{\bm{\omega}}$ be a limit point of $\bm{\omega}^{(T, \epsilon, \lambda)}$ as $T\rightarrow +\infty$.
   Similar to \eqref{likelihood-t-eps}, let us define 
   \begingroup
   \small
\begin{equation*}
  -\ln\mathcal{L}^{(T), *} 
    =  -\sum_{l=0}^{M-1} \ln a^*_{i_l}(y_l) + \sum_{l=0}^{M} t_l\,a^*(y_l)\,.
\end{equation*}
\endgroup
  For any $\bm{\omega} \in \mathbb{R}^N$, using a similar derivation as \eqref{prop-omega-limit-general-proof-eqn1}, we can obtain 
  \begingroup
  \small
	  \begin{align}
	    \begin{split}
	    & ~~~   \lim_{T\rightarrow +\infty} \frac{
 \ln\mathcal{L}^{(T),*}- \ln \mathcal{L}^{(T,\epsilon)}(\bm{\omega})}{T}   \\
	      	    &=\sum_{x\in\mathbb{X}}
	\bigg[D_{KL}\Big(\psi^*(\cdot\,;\,x)\,\Big|\,
	  \psi^{(0)}\big(\cdot\,;\,x,\bm{\omega}\big)\Big) +
	D_{KL}\Big(p^*(\cdot\,;\,x)\,\Big|\,
      p^{(0)}\big(\cdot\,;\,x,\bm{\omega}\big)\Big)\bigg]
	a^*(x)\,\pi(x) \,,
	    \end{split}
	    \label{limit-log-l-t-eps-differ-omega}
	  \end{align}
	  \endgroup
	  as well as
	  \begingroup
	  \small
	  \begin{align}
	    \begin{split}
	    &~~~    \limsup_{T\rightarrow +\infty} 
	      \frac{ \ln\mathcal{L}^{(T),*}
- \ln \mathcal{L}^{(T,\epsilon)}(\bm{\omega}^{(T, \epsilon,\lambda)})}{T} \\
	      	    &\ge \sum_{x\in\mathbb{X}}
	\bigg[D_{KL}\Big(\psi^*(\cdot\,;\,x)\,\Big|\,
	  \psi^{(0)}\big(\cdot\,;\,x,\bar{\bm{\omega}}\big)\Big) +
	      D_{KL}\Big(p^*(\cdot\,;\,x)\,\Big|\,
	    p^{(0)}\big(\cdot\,;\,x,\bar{\bm{\omega}}\big)\Big)\bigg]
	a^*(x)\,\pi(x) \,.
	    \end{split}
	    \label{limit-log-l-t-eps-differ-bar}
	  \end{align}
	  \endgroup
	  Since $\bm{\omega}^{(T, \epsilon,\lambda)}$ is the minimizer of the problem \eqref{opt-problem-1}, we have
	  \begingroup
	  \small
	  \begin{align*}
	    &   \frac{1}{T}
	    \Big(-\ln\mathcal{L}^{(T,\epsilon)}(\bm{\omega}^{(T,
	    \epsilon,\lambda)}) +
	    \ln\mathcal{L}^{(T),*}\Big) +\lambda(T) \|\bm{\omega}^{(T, \epsilon,\lambda)}\|_1 
	    \le  \frac{1}{T}
	    \Big(-\ln\mathcal{L}^{(T,\epsilon)}(\bm{\omega}) +
	    \ln\mathcal{L}^{(T),*}\Big) +\lambda(T) \|\bm{\omega}\|_1\,.
	  \end{align*}
	  \endgroup
	  Taking the limit $T\rightarrow +\infty$ in the inequality above,
	  using \eqref{lambda-t-limit-is-zero},
	  \eqref{limit-log-l-t-eps-differ-omega} and
	  \eqref{limit-log-l-t-eps-differ-bar}, we obtain 
	  \begingroup
	  \small
	  \begin{align} \label{omega-bar-has-min-kl-divergence}
	      	    &~~~\sum_{x\in\mathbb{X}}
	\bigg[D_{KL}\Big(\psi^*(\cdot\,;\,x)\,\Big|\,
	  \psi^{(0)}\big(\cdot\,;\,x,\bar{\bm{\omega}}\big)\Big) +
	      D_{KL}\Big(p^*(\cdot\,;\,x)\,\Big|\,
	    p^{(0)}\big(\cdot\,;\,x,\bar{\bm{\omega}}\big)\Big)\bigg]
	a^*(x)\,\pi(x) \\
	    &\le
\sum_{x\in\mathbb{X}}
	\bigg[D_{KL}\Big(\psi^*(\cdot\,;\,x)\,\Big|\,
	  \psi^{(0)}\big(\cdot\,;\,x,\bm{\omega}\big)\Big) +
	D_{KL}\Big(p^*(\cdot\,;\,x)\,\Big|\,
      p^{(0)}\big(\cdot\,;\,x,\bm{\omega}\big)\Big)\bigg]
	a^*(x)\,\pi(x) \,, \quad \forall~\bm{\omega}\in\mathbb{R}^N\,.  \nonumber
	  \end{align}
	  \endgroup
	  In particular, choosing $\bm{\omega} = \bm{\omega}^{*}$, we get 
	 \begingroup 
	  \small
	  \begin{align*}
\sum_{x\in\mathbb{X}}
	    \bigg[D_{KL}\Big(\psi^{(0)}(\cdot\,;\,x, \bm{\omega}^{*})\,\Big|\,
	    \psi^{(0)}\big(\cdot\,;\,x,\bar{\bm{\omega}}\big)\Big) +
	    D_{KL}\Big(p^{(0)}(\cdot\,;\,x, \bm{\omega}^{*})\,\Big|\,
	    p^{(0)}\big(\cdot\,;\,x,\bar{\bm{\omega}}\big)\Big)\bigg]
	a^*(x)\,\pi(x)  = 0 \,,
	  \end{align*}
	  \endgroup
	  which implies
	    $a_i^{(0)}\big(x\,;\,\bar{\bm{\omega}}\big) =
	    a_i^{(0)}\big(x\,;\,\bm{\omega}^{*}\big)$\,,$\forall~ 1 \le i \le
	    K$ and $\forall~x \in \mathbb{X}$.
	  From the uniqueness of $\bm{\omega}^{*}$
	  (Assumption~\ref{assump-task2-omega-true}), we know $\bar{\bm{\omega}} = \bm{\omega}^{*}$ and therefore the convergence
$\lim\limits_{T\rightarrow +\infty} \bm{\omega}^{(T, \epsilon, \lambda)} = \bm{\omega}^{*}$ is obtained.
\end{proof}
\begin{proof}[Proof of Theorem~\ref{thm-asymptotic-normality-lambda}]
  First of all, the assumption \eqref{sqrt-t-times-lambda-t-limit-is-zero} implies
  $\lim\limits_{T\rightarrow +\infty} \lambda(T) = 0$. Therefore,
  Theorem~\ref{thm-omega-limit-general-lambda} assures the almost sure
  convergence of the sequence $\bm{\omega}^{(T, \epsilon, \lambda)}$ to $\bm{\omega}^{*}$.
  
  The same identity \eqref{taylor-expansion-m-j} still holds for 
    $\mathcal{M}^{(T,\epsilon)}_{j}$ and $\bm{\omega}^{(T,
    \epsilon, \lambda)}$ in the current setting. 
Similar to \eqref{w-talor-eqn-vector}, using the relation \eqref{euler-lagrange-lambda-vector},
   in the current case we can obtain
   \begingroup
   \small
    \begin{align*}
      \sqrt{T}\lambda(T) \bm{v}^{(T)} + \mathcal{W}^{(T)} = \mathcal{B}^{(T)} \Big[\sqrt{T}
      \big(\bm{\omega}^{(T, \epsilon,\lambda)} - \bm{\omega}^{*}\big)\Big]\,,
    \end{align*}
    \endgroup
    where the vector $\bm{v}^{(T)} \in -\partial|\bm{\omega}|(\bm{\omega}^{(T, \epsilon,\lambda)})$ is bounded, $\mathcal{W}^{(T)} \in \mathbb{R}^N$ is given by 
    \begingroup
    \small
   \begin{align}
     \begin{split}
     \mathcal{W}_j^{(T)} &=
        \frac{1}{\sqrt{T}}\int_0^T
       (\ln G_\epsilon)'\Big(
    \sum\limits_{j'\in \mathcal{I}_i}
       \omega^{*}_{j'}\,\varphi_{j'}(X(s))\Big)
     \varphi_{j}\big(X(s)\big)\,d\widetilde{R}_i(s) \\
       &~~~ - \frac{1}{\sqrt{T}}\int_0^T
       G_\epsilon'\Big(\sum\limits_{j'\in \mathcal{I}_i}
       \omega^{*}_{j'}\,\varphi_{j'}(X(s))\Big)
       \varphi_{j}\big(X(s)\big)\,\Big[1-\Big(\frac{G_0
       }{G_\epsilon}\Big)\big(\sum\limits_{j'\in \mathcal{I}_i}
       \omega^{*}_{j'}\,\varphi_{j'}(X(s))\big)\Big] ds\,, 
     \end{split}
  \label{w-lambda}
     \end{align}
     \endgroup
     for $1 \le j \le N$, and $\mathcal{B}^{(T)} \in \mathbb{R}^{N\times N}$ is given by 
     \begingroup
     \small
     \begin{align}
       \begin{split}
     \mathcal{B}_{j,j'}^{(T)} &= 
	-\frac{1}{T}\int_0^T 
	\Big[
	\int_0^1 
	(\ln G_\epsilon)''\Big(\sum\limits_{k\in \mathcal{I}_i}
       \big(\theta \omega_{k}^{(T,\epsilon, \lambda)}+ (1-\theta)
       \omega_{k}^{*}\big)\,\varphi_{k}(X(s))\Big)
       d\theta\Big]
\varphi_{j}\big(X(s)\big)\,\varphi_{j'}\big(X(s)\big) \,dR_i(s) \\
       & ~~~ + \frac{1}{T}\int_0^T 
	\Big[\int_0^1 G_\epsilon''\Big(\sum\limits_{k\in \mathcal{I}_i} 
	       \big(\theta \omega_{k}^{(T,\epsilon, \lambda)}+ (1-\theta)
	       \omega_{k}^{*}\big) \,\varphi_{k}(X(s))\Big)
	       d\theta\Big] \varphi_{j}\big(X(s)\big)\,\varphi_{j'}\big(X(s)\big) \,ds \\
\end{split}
  \label{b-lambda}
\end{align}
\endgroup
  if there is an index $i$, $1 \le i \le K$, such that $j,j' \in
  \mathcal{I}_i$, and otherwise $\mathcal{B}_{j,j'}^{(T)}=0$. See \eqref{exp-1st-derivative-r-eps} and \eqref{exp-2nd-derivative-r-eps}.

  Due to the second assumption in Theorem~\ref{thm-asymptotic-normality-lambda}, we can find another constant
  $c'>0$ such that $|\sum\limits_{j \in \mathcal{I}_i}\omega_{j}^{*}\varphi_j(x)| \ge c$,
  for all $x \in \mathbb{X}$ and $1 \le i \le K$, unless $\varphi_j(x)=0$ for
  all $j \in \mathcal{I}_i$. Applying Lemma~\ref{lemma-g-eps} in Appendix~\ref{app-1}, we have 
  \begin{align*}
    G_\epsilon'(x) \Big(1-\frac{G_0(x)}{G_\epsilon(x)}\Big) 
    < \begin{cases}
       \frac{\epsilon e^{-c'/\epsilon}}{c'}\,, & x \ge c' \,, \\
      e^{-c'/\epsilon}\,,& x \le -c'\,,
    \end{cases}
    \quad 
    \mbox{and}
    \quad G_\epsilon''(x) < \frac{1}{\epsilon} e^{-c'/\epsilon}, \quad \forall~|x|\ge c'\,.
  \end{align*}
  Since $\epsilon = \mathcal{O}(T^{-\alpha})$ and the functions $\varphi_j$
  are bounded, the second terms in the expressions of both
  $\mathcal{W}_{j}^{(T)}$ in \eqref{w-lambda} and $\mathcal{B}_{j,j'}^{(T)}$ in
  \eqref{b-lambda} converge to zero as $T\rightarrow +\infty$.
At the same time, Lemma~\ref{lemma-for-log-geps} in Appendix~\ref{app-1} implies
  that $\lim\limits_{\epsilon \rightarrow 0+} (\ln G_\epsilon)'(x) =\frac{1}{x}$ and 
$\lim\limits_{\epsilon \rightarrow 0+} (\ln G_\epsilon)''(x)=-\frac{1}{x^2}$, uniformly on $x \ge
c'>0$. Applying Lemma~\ref{lemma-integral-ri-ctl} in Appendix~\ref{app-2}, we know that the vector $\mathcal{W}^{(T)}$
   converges in distribution to a Gaussian random variable with zero mean and
   covariance matrix given by $\mathcal{F}$. 
  Since $\lim\limits_{T\rightarrow +\infty} \bm{\omega}^{(T,\epsilon, \lambda)} =
  \bm{\omega}^{*}$ almost surely,
  Lemma~\ref{lemma-integral-ri-limit} in Appendix~\ref{app-2} implies 
  $\lim\limits_{T\rightarrow \infty} \mathcal{B}^{(T)}  = 
  \mathcal{F}$, a.s.  Applying Slutsky's Theorem~\cite{ferguson1996course} and using the assumption
\eqref{sqrt-t-times-lambda-t-limit-is-zero}, we can conclude
  \begingroup
\small
\begin{align*}
  \sqrt{T} \big(\bm{\omega}^{(T, \epsilon, \lambda)} - \bm{\omega}^{*}\big)
  = (\mathcal{B}^{(T)})^{-1} \big(\mathcal{W}^{(T)} + \sqrt{T}\lambda(T)
  \bm{v}^{(T)}\big)
  \Longrightarrow \mathcal{Z} \in
  \mathcal{N}\big(0, \mathcal{F}^{-1}\big)\,, 
  ~\mbox{as}~~ T\rightarrow \infty\,.
\end{align*}
\endgroup
\end{proof}
\begin{remark}
  The second assumption in Theorem~\ref{thm-asymptotic-normality-lambda} is used to handle the
  second terms of $\mathcal{W}_j^{(T)}$ in~\eqref{w-lambda} and $\mathcal{B}_{j,j'}^{(T)}$ in \eqref{b-lambda}. It is not needed if
  $a^*_i(x)>0$ for all $x\in \mathbb{X}$ and $1 \le i \le K$.
  \label{rmk-explain-assump-in-thm}
\end{remark}

\section*{Acknowledgements}
This work is funded by the Deutsche Forschungsgemeinschaft (DFG, German Research Foundation) under Germany's Excellence Strategy --- The Berlin Mathematics Research Center MATH+ (EXC-2046/1, project ID: 390685689). The authors also acknowledge financial support from the Einstein Center of Mathematics (ECMath) through project CH21.

\bibliographystyle{abbrv}
\bibliography{references}

\begin{thebibliography}{10}

\bibitem{Anderson2011}
D.~F. Anderson and T.~G. Kurtz.
\newblock Continuous time {Markov} chain models for chemical reaction networks.
\newblock In H.~Koeppl, G.~Setti, M.~di~Bernardo, and D.~Densmore, editors,
  {\em Design and Analysis of Biomolecular Circuits: Engineering Approaches to
  Systems and Synthetic Biology}, pages 3--42. Springer New York, New York, NY,
  2011.

\bibitem{ANGELI2009}
D.~Angeli.
\newblock A tutorial on chemical reaction network dynamics.
\newblock {\em Eur. J. Control}, 15(3):398 -- 406, 2009.

\bibitem{parameter-estimation-sys-biology}
M.~Ashyraliyev, Y.~Fomekong-Nanfack, J.~A. Kaandorp, and J.~G. Blom.
\newblock Systems biology: parameter estimation for biochemical models.
\newblock {\em FEBS J.}, 276(4):886--902, 2009.

\bibitem{Bagirov-nonsmooth}
A.~Bagirov, N.~Karmitsa, and M.~M. M\"{a}kel\"{a}.
\newblock {\em Introduction to Nonsmooth Optimization: Theory, Practice and
  Software}.
\newblock Springer Publishing Company, Incorporated, 2014.

\bibitem{ball06}
K.~Ball, T.~G. Kurtz, L.~Popovic, and G.~Rempala.
\newblock Asymptotic analysis of multiscale approximations to reaction
  networks.
\newblock {\em Ann. Appl. Probab.}, 16(4):1925--1961, 2006.

\bibitem{nature-network-biology-barabasi}
A.-L. Barab\'{a}si and Z.~N~Oltvai.
\newblock Network biology: Understanding the cell's functional organization.
\newblock {\em Nat. Rev. Genet.}, 5:101--113, 2004.

\bibitem{fista2009}
A.~Beck and M.~Teboulle.
\newblock A fast iterative shrinkage-thresholding algorithm for linear inverse
  problems.
\newblock {\em SIAM J. Imaging Sci.}, 2(1):183--202, 2009.

\bibitem{sindy-sde}
L.~Boninsegna, F.~N\"{u}ske, and C.~Clementi.
\newblock Sparse learning of stochastic dynamical equations.
\newblock {\em J. Chem. Phys.}, 148(24):241723, 2018.

\bibitem{Boys2008}
R.~J. Boys, D.~J. Wilkinson, and T.~B.~L. Kirkwood.
\newblock Bayesian inference for a discretely observed stochastic kinetic
  model.
\newblock {\em Stat. Comput.}, 18(2):125--135, 2008.

\bibitem{Brunton-sindy}
S.~L. Brunton, J.~L. Proctor, and J.~N. Kutz.
\newblock Discovering governing equations from data by sparse identification of
  nonlinear dynamical systems.
\newblock {\em Proc. Natl. Acad. Sci. USA}, 113(15):3932--3937, 2016.

\bibitem{compare-reverse-methods-silico-network}
D.~Camacho, P.~Vera~Licona, P.~Mendes, and R.~Laubenbacher.
\newblock Comparison of reverse-engineering methods using an in silico network.
\newblock {\em Ann. N. Y. Acad. Sci.}, 1115(1):73--89, 2007.

\bibitem{scRNAsq-2019}
G.~Chen, B.~Ning, and T.~Shi.
\newblock Single-cell {RNA-Seq} technologies and related computational data
  analysis.
\newblock {\em Front. Genet.}, 10:317, 2019.

\bibitem{clarke1990optimization}
F.~Clarke.
\newblock {\em Optimization and Nonsmooth Analysis}.
\newblock Classics in Applied Mathematics. SIAM, Philadelphia, PA, 1990.

\bibitem{efficient-infer-ssystems-alternating-regression}
B.~C. Daniels and I.~Nemenman.
\newblock Efficient inference of parsimonious phenomenological models of
  cellular dynamics using {S}-systems and alternating regression.
\newblock {\em PLOS ONE}, 10(3):1--14, 2015.

\bibitem{nssa-weinan-jcp2005}
W.~E, D.~Liu, and E.~Vanden-Eijnden.
\newblock Nested stochastic simulation algorithm for chemical kinetic systems
  with disparate rates.
\newblock {\em J. Chem. Phys.}, 123(19):194107, 2005.

\bibitem{erdi1989mathematical}
P.~{\'E}rdi and J.~T{\'o}th.
\newblock {\em Mathematical Models of Chemical Reactions: Theory and
  Applications of Deterministic and Stochastic Models}.
\newblock Nonlinear science : theory and applications. Manchester University
  Press, 1989.

\bibitem{ethier1986markov}
S.~Ethier and T.~Kurtz.
\newblock {\em Markov processes: characterization and convergence}.
\newblock Wiley series in probability and mathematical statistics. Probability
  and mathematical statistics. Wiley, 1986.

\bibitem{mle-ctsp-feigin1976}
P.~D. Feigin.
\newblock Maximum likelihood estimation for continuous-time stochastic
  processes.
\newblock {\em Adv. Appl. Probab.}, 8(4):712--736, 1976.

\bibitem{ferguson1996course}
T.~S. Ferguson.
\newblock {\em A Course in Large Sample Theory}.
\newblock Chapman \& Hall Texts in Statistical Science Series. Taylor \&
  Francis, 1996.

\bibitem{cross-validation-geisser}
S.~Geisser.
\newblock The predictive sample reuse method with applications.
\newblock {\em J. Am. Stat. Assoc.}, 70(350):320--328, 1975.

\bibitem{Gillespie1976_ssa}
D.~T. Gillespie.
\newblock A general method for numerically simulating the stochastic time
  evolution of coupled chemical reactions.
\newblock {\em J. Comput. Phys.}, 22(4):403 -- 434, 1976.

\bibitem{Gillespie1977_ssa}
D.~T. Gillespie.
\newblock Exact stochastic simulation of coupled chemical reactions.
\newblock {\em J. Phys. Chem.}, 81(25):2340--2361, 1977.

\bibitem{ssck_gillespie}
D.~T. Gillespie.
\newblock Stochastic simulation of chemical kinetics.
\newblock {\em Annu. Rev. Phys. Chem.}, 58(1):35--55, 2007.

\bibitem{Gunawardena_chemicalreaction}
J.~Gunawardena.
\newblock Chemical reaction network theory for in-silico biologists contents.
\newblock \url{http://vcp.med.harvard.edu/papers/crnt.pdf}, 2003.

\bibitem{haseltine-rawlings-2002}
E.~L. Haseltine and J.~B. Rawlings.
\newblock Approximate simulation of coupled fast and slow reactions for
  stochastic chemical kinetics.
\newblock {\em J. Chem. Phys.}, 117(15):6959--6969, 2002.

\bibitem{hastie2009elements}
T.~Hastie, R.~Tibshirani, and J.~Friedman.
\newblock {\em The Elements of Statistical Learning: Data Mining, Inference,
  and Prediction}.
\newblock Springer series in statistics. Springer, 2009.

\bibitem{hastie-Tibshirani-book-sparsity}
T.~Hastie, R.~Tibshirani, and M.~Wainwright.
\newblock {\em Statistical Learning with Sparsity: The Lasso and
  Generalizations}.
\newblock Chapman and Hall/CRC Monographs on Statistics and Applied
  Probability. 1st edition, 2015.

\bibitem{HUG2013-high-dim-param-estimation}
S.~Hug, A.~Raue, J.~Hasenauer, J.~Bachmann, U.~Klingmüller, J.~Timmer, and
  F.~J. Theis.
\newblock High-dimensional {Bayesian} parameter estimation: Case study for a
  model of {JAK2/STAT5} signaling.
\newblock {\em Math. Biosci.}, 246(2):293 -- 304, 2013.

\bibitem{kang2013}
H.-W. Kang and T.~G. Kurtz.
\newblock {Separation of time-scales and model reduction for stochastic
  reaction networks}.
\newblock {\em Ann. Appl. Probab.}, 23(2):529--583, 2013.

\bibitem{Kar-explore-the-noise}
S.~Kar, W.~T. Baumann, M.~R. Paul, and J.~J. Tyson.
\newblock Exploring the roles of noise in the eukaryotic cell cycle.
\newblock {\em Proc. Natl. Acad. Sci. USA}, 106(16):6471--6476, 2009.

\bibitem{keiding1975}
N.~Keiding.
\newblock Maximum likelihood estimation in the birth-and-death process.
\newblock {\em Ann. Statist.}, 3(2):363--372, 1975.

\bibitem{sparse-regression-from-single-cell-2016}
A.~Klimovskaia, S.~Ganscha, and M.~Claassen.
\newblock Sparse regression based structure learning of stochastic reaction
  networks from single cell snapshot time series.
\newblock {\em PLOS Comput. Biol.}, 12:1--20, 2016.

\bibitem{kutz2013data-book}
J.~N. Kutz.
\newblock {\em Data-Driven Modeling $\&$ Scientific Computation: Methods for
  Complex Systems $\&$ Big Data}.
\newblock OUP Oxford, 1st edition, 2013.

\bibitem{lehmann2004elements}
E.~Lehmann.
\newblock {\em Elements of Large-Sample Theory}.
\newblock Springer Texts in Statistics. Springer New York, 2004.

\bibitem{formalism-in-system-biology-machado2011}
D.~Machado, R.~S. Costa, M.~Rocha, E.~C. Ferreira, B.~Tidor, and I.~Rocha.
\newblock Modeling formalisms in systems biology.
\newblock {\em AMB Express}, 1(1):45, 2011.

\bibitem{mackay-info-theory-2002}
D.~J.~C. MacKay.
\newblock {\em Information Theory, Inference, and Learning Algorithms}.
\newblock Cambridge University Press, New York, NY, USA, 2002.

\bibitem{inferring-bio-network-by-sparse-identy-2016}
N.~M. Mangan, S.~L. Brunton, J.~L. Proctor, and J.~N. Kutz.
\newblock Inferring biological networks by sparse identification of nonlinear
  dynamics.
\newblock {\em IEEE Trans. Mol. Biol. Multi-Scale Commun.}, 2(1):52--63, 2016.

\bibitem{meyn1993markov}
S.~P. Meyn and R.~L. Tweedie.
\newblock {\em Markov Chains and Stochastic Stability}.
\newblock Springer-Verlag, London, 1993.

\bibitem{murray2007mathematical}
J.~D. Murray.
\newblock {\em Mathematical Biology: I. An Introduction (Interdisciplinary
  Applied Mathematics) (Pt. 1)}.
\newblock Springer, New York, 2007.

\bibitem{how-to-infer-gene-network-revisited}
C.~A. Penfold and D.~L. Wild.
\newblock How to infer gene networks from expression profiles, revisited.
\newblock {\em Interface Focus}, 1(6):857--870, 2011.

\bibitem{reinker-param-estimation-stochastic}
S.~Reinker, R.~M. Altman, and J.~Timmer.
\newblock Parameter estimation in stochastic biochemical reactions.
\newblock {\em IEE P. Syst. Biol.}, 153(4):168--178, 2006.

\bibitem{single-cell-will-revolutionize}
E.~Shapiro, T.~Biezuner, and S.~Linnarsson.
\newblock Single-cell sequencing-based technologies will revolutionize
  whole-organism science.
\newblock {\em Nat. Rev. Genet.}, 14:618--630, 2013.

\bibitem{snyman2018practical}
J.~Snyman and D.~Wilke.
\newblock {\em Practical Mathematical Optimization: Basic Optimization Theory
  and Gradient-Based Algorithms}.
\newblock Springer Optimization and Its Applications. Springer International
  Publishing, 2018.

\bibitem{SRIVASTAVA2002}
R.~Srivastava, L.~You, J.~Summers, and J.~Yin.
\newblock Stochastic vs. deterministic modeling of intracellular viral
  kinetics.
\newblock {\em J. Theor. Biol.}, 218(3):309--321, 2002.

\bibitem{Swain-stochasticity}
P.~S. Swain, M.~B. Elowitz, and E.~D. Siggia.
\newblock Intrinsic and extrinsic contributions to stochasticity in gene
  expression.
\newblock {\em Proc. Natl. Acad. Sci. USA}, 99(20):12795--12800, 2002.

\bibitem{survey-methods-for-modeling-and-analyzing}
N.~Tenazinha and S.~Vinga.
\newblock A survey on methods for modeling and analyzing integrated biological
  networks.
\newblock {\em IEEE/ACM Trans. Comput. Biol. Bioinform.}, 8(4):943--958, 2011.

\bibitem{lasso1996}
R.~Tibshirani.
\newblock Regression shrinkage and selection via the {Lasso}.
\newblock {\em J. R. Statist. Soc. B (Methodological)}, 58(1):267--288, 1996.

\bibitem{lasso-retrospective2011}
R.~Tibshirani.
\newblock Regression shrinkage and selection via the lasso: a retrospective.
\newblock {\em J. R. Statist. Soc. B (Statistical Methodology)},
  73(3):273--282, 2011.

\bibitem{exact-recovery-chaotic-2017}
G.~Tran and R.~Ward.
\newblock Exact recovery of chaotic systems from highly corrupted data.
\newblock {\em Multiscale Model. $\&$ Simul.}, 15(3):1108--1129, 2017.

\bibitem{van2000asymptotic}
A.~van~der Vaart.
\newblock {\em Asymptotic Statistics}.
\newblock Asymptotic Statistics. Cambridge University Press, 2000.

\bibitem{reverse-engineering-in-system-biology-2014}
A.~F. Villaverde and J.~R. Banga.
\newblock Reverse engineering and identification in systems biology:
  strategies, perspectives and challenges.
\newblock {\em J. R. Soc. Interface}, 11(91), 2014.

\bibitem{wald1949}
A.~Wald.
\newblock Note on the consistency of the maximum likelihood estimate.
\newblock {\em Ann. Math. Statist.}, 20(4):595--601, 1949.

\bibitem{predict-catastrophes-compress-sensing}
W.-X. Wang, R.~Yang, Y.-C. Lai, V.~Kovanis, and C.~Grebogi.
\newblock Predicting catastrophes in nonlinear dynamical systems by compressive
  sensing.
\newblock {\em Phys. Rev. Lett.}, 106:154101, 2011.

\bibitem{wilkinson2011stochastic}
D.~J. Wilkinson.
\newblock {\em Stochastic Modelling for Systems Biology, Second Edition}.
\newblock Chapman $\&$ Hall/CRC Mathematical and Computational Biology. Taylor
  \& Francis, 2011.

\bibitem{hkz2017}
W.~Zhang, C.~Hartmann, and M.~von Kleist.
\newblock Optimal control for {Markov} jump process : asymptotic analysis and
  algorithms.
\newblock {\em Commun. Math. Sci.}, 16(2):293--331, 2018.

\end{thebibliography}

\end{document}